\numberwithin{equation}{section}
\numberwithin{figure}{section}
\newcommand{\nn}{\nonumber}
\newcommand{\Ni}{\noindent}
\newcommand{\dr}{ \, {\rm d} r}
\newcommand{\dv}{ \, {\rm d} v}
\newcommand{\dt}{ \, {\rm d} t}
\newcommand{\dx}{ \, {\rm d} x}
\newcommand{\deta}{ \, {\rm d} \eta}
\newcommand{\dtheta}{ \, {\rm d} \theta}
\newcommand{\BigO}{{\mathcal{O}}}
\newcommand{\abs}[1]{\left\lvert#1\right\rvert}
\newcommand{\vpran}[1]{\left(#1\right)}
\newcommand{\Eps}{\varepsilon}
\newcommand{\F}{\mathcal{F}}
\newcommand{\Ord}{\mathcal{O}}
\newcommand{\R}{\mathbb{R}}
\newcommand{\N}{\mathbb{N}}
\newcommand{\eps}{\varepsilon}
\newcommand{\vs}{v^*}
\newcommand{\xs}{x^*}
\newcommand{\ys}{y^*}
\newcommand{\bvs}{\mathbf{v}^*}
\newcommand{\bxs}{\mathbf{x}^*}
\newcommand{\bv}{\mathbf{v}}
\newcommand{\bx}{\mathbf{x}}
\newcommand{\bxo}{\mathbf{x}_0}
\newcommand{\rs}{r^*}
\newcommand{\thetas}{\theta^*}
\newcommand{\ro}{r_0}
\newcommand{\thetao}{\theta_0}
\newcommand{\vt}{\tilde{v}}
\newcommand{\thetat}{\tilde{\theta}}
\newcommand{\rt}{\tilde{r}}
\newcommand{\tEndI}{\tau_1}
\newcommand{\tEndII}{\tau_2}
\newcommand{\tEndIII}{\tau_3}
\newcommand{\tEndIV}{\tau_4}
\newcommand{\tEndIVA}{\tau_{4A}}
\newcommand{\tEndIVB}{\tau_{4B}}
\newcommand{\As}{A^*}
\newcommand{\thetatwo}{\varphi^*}
\newcommand{\thetathree}{\tilde{\varphi}}
\newcommand{\thetatwos}{\omega^*}
\newcommand{\thetathrees}{\tilde{\omega}}
\newcommand{\leqs}{\leqslant}
\newcommand{\geqs}{\geqslant}
\newcommand{\RP}{(RP)}
\newcommand{\RN}{(RN)}
\newtheorem{theorem}{Theorem}
\newtheorem{remark}{Remark}
\newtheorem{assumption}{Assumption}
\renewcommand{\leq}{\leqs}
\renewcommand{\geq}{\geqs}
\begin{document}

\title{Small inertia regularization of an anisotropic aggregation model
}

\author[J.H.M.~Evers]{Joep H.M.~Evers}
\address{Department of Mathematics and Statistics, Dalhousie University, PO Box 15000, Halifax, NS B3H 4R2, Canada 
\\
\& Department of Mathematics, Simon Fraser University, 8888 University Dr., Burnaby, BC V5A 1S6, Canada}
\email{jevers@sfu.ca}

\author[R.C.~Fetecau]{Razvan C.~Fetecau}
\address{Department of Mathematics, Simon Fraser University, 8888 University Dr., Burnaby, BC V5A 1S6, Canada}
\email{van@sfu.ca}


\author[W.~Sun]{Weiran Sun}
\address{Department of Mathematics, Simon Fraser University, 8888 University Dr., Burnaby, BC V5A 1S6, Canada}
\email{weirans@sfu.ca}



\begin{abstract}

We consider an anisotropic first-order ODE aggregation model and its approximation by a second-order relaxation system. The relaxation model contains a small parameter $\eps$, which can be interpreted as inertia or response time. We examine rigorously the limit $\eps \to 0$ of solutions  to the relaxation system. Of major interest is how discontinuous (in velocities) solutions to the first-order model are captured in the zero-inertia limit. We find that near such discontinuities, solutions to the second-order model perform fast transitions within a time layer of size $\BigO(\eps^{2/3})$. We validate this scale with numerical simulations.

\medskip
\textbf{Keywords}: aggregation models; anisotropy; regularization; relaxation time; jump criteria; singular perturbation.


\end{abstract}

\maketitle

\section{Introduction}

The aim of the present paper is to provide a rigorous foundation for a certain mathematical model for self-collective behaviour. The model describes the evolution of positions~$x_i$ ($i=1,\dots,N$) of $N$ particles (individuals) in $\mathbb{R}^d$; in its simplest form it reads:
\begin{subequations} 
\label{eqn:fo-nobz} 
\begin{align}
    \frac{\dx_i}{\dt} &= v_i, \\
    v_i &= -\frac{1}{N} \displaystyle\sum_{j\neq i}\nabla_{x_i}K(|x_i-x_j|), \label{eqn:vi-nobz} 
\end{align} 
\end{subequations}
where $K$ is a radially symmetric  aggregation potential which models inter-individual social interactions.  The particular form of $K$ depends on the specific application, and it typically incorporates long-range attraction and short-range repulsion interactions between individuals. 

Models for self-collective behaviour have been of central interest lately, due to their diverse applications in a wide range of areas, such as the formation of  biological groups (fish schools, bird flocks, insect swarms) \cite{Camazine_etal},  robotics and space missions \cite{JiEgerstedt2007}, opinion formation \cite{MotschTadmor2014},  traffic and pedestrian flow \cite{Helbing95} and social networks \cite{Jackson2010}. 

Model \eqref{eqn:fo-nobz} and its continuum/macroscopic counterpart \cite{BodnarVelazquez2} have been hugely popular in the aggregation literature of the last decade. On one hand, model \eqref{eqn:fo-nobz} can capture a wide variety of  experimentally-observed self-collective or swarm behaviours \cite{Parish:Keshet, Ballerini_etal, Camazine_etal, Theraulaz:Deneubourg}, which gives practical grounds to the model. Aggregation patterns that can be achieved numerically with model  \eqref{eqn:fo-nobz} include uniform densities in a ball, uniform densities on a co-dimension one manifold (ring in 2D, sphere in 3D), annuli, soccer balls \cite{LeToBe2009, KoSuUmBe2011, BrechtUminsky2012, FeHuKo11,Balague_etalARMA}.  On the other hand, the continuum limit of model  \eqref{eqn:fo-nobz}  has attracted wide interest from analysts and a substantial amount of rigorous results have been established for the macroscopic model in recent years \cite{BodnarVelazquez2, BertozziLaurent, BertozziLaurentRosado, CarrDiFranFigLauSlep11}.

A fundamental assumption in model \eqref{eqn:fo-nobz} is that the interactions are {\em isotropic}, as they depend only on the distance between individuals (by radial symmetry, $K(x)=K(|x|)$). This assumption is not realistic, in particular for biological applications, where most species have a restricted zone of social perception, defined by the limitations of their perception ranges (e.g., a restricted field of vision) \cite{Seidl:Kaiser, Kunz:Hemelrijk2012}. Nevertheless, despite the extensive literature on model \eqref{eqn:fo-nobz},  a systematic study of its (more realistic) anisotropic extensions has only been addressed very recently \cite{Gulikers,EversIFAC, EversFetecauRyzhik}.  The primary goal of this paper is to consider the anisotropic model investigated in \cite{EversFetecauRyzhik} and set a rigorous framework for the well-posedness of its solutions.

Perception restrictions to model  \eqref{eqn:fo-nobz} can be introduced via weights $w_{ij}$ that limit the influence of individuals $j$ on the reference individual $i$ \cite{EversFetecauRyzhik}:
 \begin{equation}
 \label{eqn:v-weighted}
 v_i = -\frac{1}{N} \displaystyle\sum_{j\neq i}\nabla_{x_i}K(|x_i-x_j|) w_{ij}.
 \end{equation}
 In \cite{EversFetecauRyzhik} the weights are set to model sensorial restrictions due a limited field of vision. Specifically, given a reference individual located at $x_i$ moving with velocity $v_i$, the weights $w_{ij}$ were assumed to depend on the relative position $x_j-x_i$ of individual $j$ with respect to the current direction of motion $v_i$ of individual $i$. Mathematically, $w_{ij}$ are modelled in \cite{EversFetecauRyzhik} as
 \begin{equation}
 \label{eqn:weights}
 w_{ij}= g\left(\dfrac{x_i-x_j}{|x_i-x_j|}\cdot \dfrac{v_i}{|v_i|}\right),
 \end{equation}
with a function $g$ chosen so that $w_{ij}$ are largest when $j$ is right ahead of individual $i$ ($x_j-x_i$ is in the same direction of $v_i$) and lowest when $j$ is right behind individual $i$ ($x_j-x_i$ in the opposite direction of $v_i$). Note that the weights $w_{ij}$ are {\em not} symmetric, as in general, $w_{ij} \neq w_{ji}$.

Combining \eqref{eqn:v-weighted} and \eqref{eqn:weights} one arrives at the following anisotropic extension of \eqref{eqn:fo-nobz}, which is the main object of study of the present paper:
\begin{subequations}
\label{eqn:fo-bz}
\begin{align}
    \frac{\dx_i}{\dt} &= v_i, \\
    v_i &= -\frac{1}{N} \displaystyle\sum_{j\neq i}\nabla_{x_i}K(|x_i-x_j|)\,g\left(\dfrac{x_i-x_j}{|x_i-x_j|} \cdot \dfrac{v_i}{|v_i|}\right). \label{eqn:vi-bz}
  \end{align}
\end{subequations}

Mathematically, the major distinction between \eqref{eqn:fo-nobz} and \eqref{eqn:fo-bz} is that in the latter model the velocities $v_i$ are no longer explicitly given in terms of the spatial configuration $\{x_1,x_2,\dots,x_N\}$, but are defined instead through the {\em implicit} equation \eqref{eqn:vi-bz}. Among other difficulties, identified and discussed in \cite{EversFetecauRyzhik}, this subtle distinction leads to a {\em loss of smoothness} of solutions of model \eqref{eqn:fo-bz}, as roots $v_i$ of \eqref{eqn:vi-bz} evolve dynamically along with the spatial configuration $\{x_1, x_2, \dots, x_N\}$. Consequently, velocities have to be allowed to be discontinuous at certain jump times and in addition, a criterion for selecting (uniquely) the correct/physical jump has to be identified. 

The key idea in \cite{EversFetecauRyzhik} is to introduce a relaxation term in \eqref{eqn:vi-bz} and consider the following regularized system
\begin{subequations}
\label{eqn:so}
\begin{align}
    \frac{\dx_i}{\dt} &= v_i, \\
    \eps\dfrac{\dv_i}{\dt} &= -v_i -\dfrac1N\displaystyle\sum_{j\neq i}\nabla_{x_i}K(|x_i-x_j|)\,g\left(\dfrac{x_i-x_j}{|x_i-x_j|} \cdot \dfrac{v_i}{|v_i|} \right) \label{eqn:vi-so} .
      \end{align}
\end{subequations}
From the biological point of view, \eqref{eqn:so} introduces a small inertia or response time for the individuals. The idea of considering a second-order model such as \eqref{eqn:so} goes back in fact to the original  derivation of the isotropic model  \eqref{eqn:fo-nobz}. Indeed, model  \eqref{eqn:fo-nobz} was formally derived in \cite{BodnarVelazquez1} from Newton's second law of motion \eqref{eqn:so} (for isotropic interactions $ g \equiv 1$) by neglecting the inertia terms; this amounts to assume that individuals change their velocities instantaneously. The authors of \cite{EversFetecauRyzhik} bring back the original second-order model and argue that restoring a small inertia/response time $\eps$, and then passing $\eps \to 0$, is the correct mechanism of capturing the ``physically" relevant jump solutions of the first-order anisotropic model \eqref{eqn:fo-bz}. The limiting procedure is then illustrated numerically in \cite{EversFetecauRyzhik} for various jump scenarios in two dimensions.

The main goal of the present paper is to validate rigorously the zero inertia limit of solutions to \eqref{eqn:so}. We restrict our attention to the two dimensional case, which is the setup of all numerical simulations in \cite{EversFetecauRyzhik}. For as long as solutions of the first-order model \eqref{eqn:fo-bz} exist and are smooth, the limit $\eps \to 0$ of system \eqref{eqn:so} follows from a classical theorem of Tikhonov \cite{Tikhonov1952,Vasileva1963}, regardless of the space dimension. This result however does not extend to discontinuous solutions of  \eqref{eqn:fo-bz}, for which roots $v_i$ of \eqref{eqn:vi-bz} can instantaneously get lost. For this reason our focus in this paper is to study the limit $\eps \to 0$ of solutions to \eqref{eqn:so} at the {\em onset} of the jump discontinuity for model \eqref{eqn:fo-bz}. The limit turns out to be subtle and for a better exposition we build the tools in stages, by starting with a toy problem in one dimension. The one-dimensional problem captures the essential features of the limiting process, in particular the $\eps$-dependent time scales in which the dynamics through a jump occurs.

The zero inertia limit of solutions to second order models has recently been investigated in the context of continuum/PDE models for collective behaviour. In \cite{FeSu2015} the authors study the continuum analogue of \eqref{eqn:so} (with $g\equiv1$, i.e., the isotropic version) and show that its solutions converge as $\eps \to 0$ to solutions of the  PDE counterpart of the first-order model \eqref{eqn:fo-nobz}. Furthermore, \cite{FeSuTa2016} considers a generalization, where an additional alignment term is included in the equation for velocity.  In this sense, the present paper complements these works, by investigating the zero inertia limit at the discrete/ODE level, and also with the caveat of allowing the interactions to be anisotropic.

The summary of the paper is as follows. Section \ref{sec:prelim} presents the two-dimensional anisotropic model and provides motivation for the studies in this paper.  In Section \ref{sec:oned} we introduce a one-dimensional model which allows us to discuss some key elements of the analysis in a  simpler setting. The main result in this section is Theorem \ref{thm: conv k ell 1D}, which  represents an extension of the Tikhonov's theorem \cite{Tikhonov1952,Vasileva1963} to nonsmooth settings. Section \ref{sec:2D-varr} considers the two-dimensional model \eqref{eqn:so}; the main result of the paper is Theorem \ref{thm: conv quad lin 2D R neg}, which establishes the zero-inertia limit of solutions to \eqref{eqn:so} through jump discontinuities of the first-order model \eqref{eqn:fo-nobz}. Finally,  in Section \ref{sec:num} we validate the analytical results with numerical simulations.


\section{Preliminaries and motivation}
\label{sec:prelim}

In this section we summarize briefly the main findings in \cite{EversFetecauRyzhik} that are relevant for the present paper. 

\subsection{First-order model and jump discontinuities.}\label{sec:intro fo} As described in the Introduction, the setup of the anisotropic model  \eqref{eqn:fo-nobz} in \cite{EversFetecauRyzhik} takes visual limitations into consideration. Indeed, consider a reference individual $i$ that interacts with a generic individual $j$, and denote by $\phi_{ij}$  the angle between $x_j-x_i$ and~$v_i$:
 \[
\dfrac{x_i-x_j}{|x_i-x_j|} \cdot \dfrac{v_i}{|v_i|} = - \cos \phi_{ij}.
\]

To model a field of vision, the weights $w_{ij} = g(-\cos \phi_{ij})$ (see \eqref{eqn:weights}) should be the largest ($=1$) for $\phi_{ij}=0$ (the vectors $x_j-x_i$ and $v_i$ are parallel) and
the lowest (possibly $0$) for~$\phi_{ij}=\pi$ ($x_j-x_i$ and $v_i$ anti-parallel) --- see Figure~\ref{fig:vision}(a) for an illustration. A weight function $g$ that captures this behaviour is shown in Figure~\ref{fig:vision}(b); there $g(-\cos\phi)=[\tanh(a(\cos\phi + 1 - b/\pi))+1]/c$, with $c$ a normalization constant such that $g(-1)=1$.

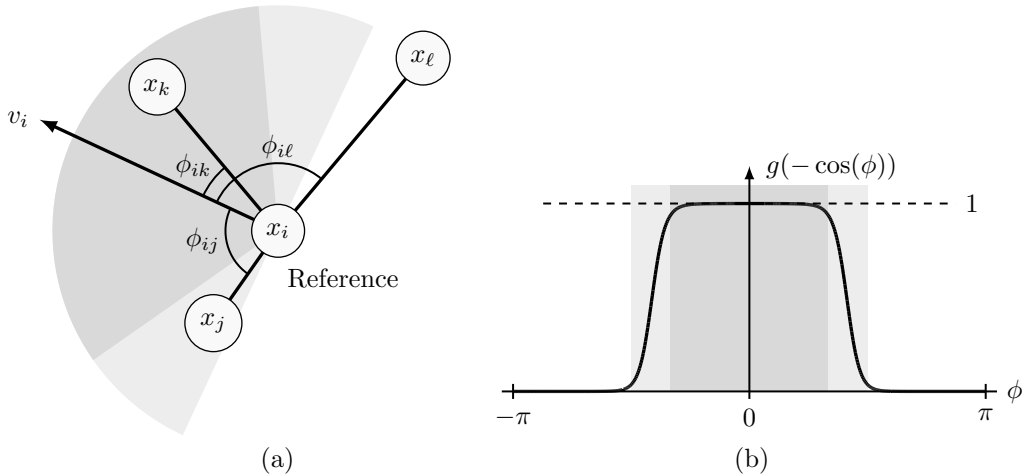
\begin{figure}[ht]%
\centering
                \begin{tikzpicture}[>= latex]

                \begin{scope}[scale=0.25] 

                \begin{scope}[scale=0.4]

                \fill[black!7!white](0,0) -- +(245:30) arc(245:65:30) -- cycle;
                \fill[black!15!white](0,0) -- +(215:30) arc(215:95:30) -- cycle;

                \draw[->, line width=1.25] (0,0) -- +(155:35) node[left] {$v_i$};

                \begin{scope} 
					\tikzstyle{every node} = [draw,circle, fill=gray!5, line width = 0.5]
                \draw[line width=0.75] (155:11) arc (155:130:11);
                \draw[line width=1.25] (0,0) -- +(130:25) node {$x_k$};
                \draw[line width=0.75] (155:7) arc (155:235:7);
                \draw[line width=1.25] (0,0) -- +(235:15)node {$x_j$};
                \draw[line width=0.75] (155:9) arc (155:50:9);
                \draw[line width=1.25] node {$x_i$} (0,0) -- +(50:30) node {$x_\ell$};
       					\end{scope} 
		     	\end{scope}
        
                \draw (-4,-0.5) node {$\phi_{ij}$};
                \draw (-4.5,3.5) node {$\phi_{ik}$};
                \draw (0,4.5) node {$\phi_{i\ell}$};
                \draw (0,-1.5) node[below right] {Reference};       

            \end{scope}

            \end{tikzpicture}$~~~$ 
\begin{tikzpicture}[>= latex]
                \begin{scope}[xscale=1,yscale=2.5]
                	\fill[black!7!white] (-90*2*pi/360,0) rectangle (90*2*pi/360,1.1);
                	\fill[black!15!white] (-60*2*pi/360,0) rectangle (60*2*pi/360,1.1);
                	\draw[line width=0.75, dashed] (-pi+0.4,1) -- (pi-0.4,1) node[right] {$1$};
					\draw[->,line width=0.75] (0,0) -- (0,1.2);
					\draw[line width=0.75] (-pi-0.15,0) -- (pi+0.15,0) node[right] {$\phi$};
					\draw[line width=0.75] (-pi,0.04) -- (-pi,-0.04) node[below] {$-\pi$};
					\draw[line width=0.75] (pi,0.04) -- (pi,-0.04) node[below] {$\pi$};
					\draw[line width=0.75] (0,0) -- (0,-0.04) node[below] {$0$};

					\pgfmathsetmacro\phiStart {-3.14};
                \pgfmathsetmacro\dphi {0.01};
                \pgfmathsetmacro\phiStOne {\phiStart+\dphi};
                 \pgfmathsetmacro\phiStTwo {\phiStart+2*\dphi};
					\foreach \x  in {\phiStOne,\phiStTwo,...,-0.2}	{
					\pgfmathsetmacro\xprev {\x-\dphi};
					\pgfmathsetmacro\a {6};	
					\pgfmathsetmacro\b {4};	
					\pgfmathsetmacro\c {tanh(\a*(2-\b/pi))+1};	
					\pgfmathsetmacro\y {(tanh(\a*(cos(360*\x/(2*pi))+1-\b/pi))+1)/\c};
					\pgfmathsetmacro\yprev {(tanh(\a*(cos(360*\xprev/(2*pi))+1-\b/pi))+1)/\c};	
					
					
					\draw[line width=1.25] (\xprev,\yprev) -- (\x,\y);
					}
					\pgfmathsetmacro\a {6};	
					\pgfmathsetmacro\b {4};
					\pgfmathsetmacro\c {tanh(\a*(2-\b/pi))+1};		
					\pgfmathsetmacro\x {-0.2};		
					\pgfmathsetmacro\y {(tanh(\a*(cos(360*\x/(2*pi))+1-\b/pi))+1)/\c};
					\pgfmathsetmacro\xnext {0.2};	
					\pgfmathsetmacro\ynext {(tanh(\a*(cos(360*\xnext/(2*pi))+1-\b/pi))+1)/\c};				
					\draw[line width=1.25] (\x,\y) -- (\xnext,\ynext);

\pgfmathsetmacro\phiStart {0.2};
                \pgfmathsetmacro\dphi {0.01};
                \pgfmathsetmacro\phiStOne {\phiStart+\dphi};
                 \pgfmathsetmacro\phiStTwo {\phiStart+2*\dphi};
					\foreach \x  in {\phiStOne,\phiStTwo,...,3.14}	{
					\pgfmathsetmacro\xprev {\x-\dphi};
					\pgfmathsetmacro\a {6};	
					\pgfmathsetmacro\b {4};	
					\pgfmathsetmacro\c {tanh(\a*(2-\b/pi))+1};	
					\pgfmathsetmacro\y {(tanh(\a*(cos(360*\x/(2*pi))+1-\b/pi))+1)/\c};
					\pgfmathsetmacro\yprev {(tanh(\a*(cos(360*\xprev/(2*pi))+1-\b/pi))+1)/\c};	
					
					
					\draw[line width=1.25] (\xprev,\yprev) -- (\x,\y);
					}

					\draw (0.1,1.2) node[right] {$g(-\cos(\phi))$};
				\end{scope}
\end{tikzpicture}
\\
 (a) \hspace{0.34\linewidth} (b)
\caption{(a): An illustration of the visual perception of a reference individual $i$: the field of vision (dark grey), a peripheral zone (light grey) and a blind zone (white). Interactions are weighted: $w_{ik}>w_{ij}>w_{i\ell}$. (b): The weight function $g(-\cos\phi)=[\tanh(a(\cos\phi + 1 - b/\pi))+1]/c$. The parameters are: $a=6$ and $b=4$, where $a$ controls the steepness of the graph and $b$ controls its width. The function takes values close to $1$ in a region around $\phi=0$ (field of vision, dark grey), has a steep decay to nearly $0$ in the peripheral zone (light grey), and takes negligible values near $\phi = \pm \pi$ (blind zone, white).}%
\label{fig:vision}%
\end{figure}

A well-posedness theory for solutions to model \eqref{eqn:fo-bz} has been established in \cite{EversFetecauRyzhik}. Up to some technical issues (omitted here), by which certain initial configurations in phase space are excluded, there exist unique local solutions $x_i(t)$, $v_i(t)$ ($i=1,\dots,N$) to \eqref{eqn:fo-bz} that are continuous in time. Most notable for the present work is the root loss of \eqref{eqn:vi-bz} alluded to above, at which such continuous solutions to \eqref{eqn:fo-bz} break. While the well-posedness theory and the breakdown of solutions to model \eqref{eqn:fo-bz} hold in any dimension, the numerical investigations in \cite{EversFetecauRyzhik}, as well the analytical results derived in the present paper, focus specifically on the two-dimensional case. Hence, throughout the paper the anisotropic model \eqref{eqn:fo-bz} is considered in $d=2$ dimensions.

To explain the loss of smoothness, consider a fixed spatial configuration $\{x_1,x_2,\dots,x_N\} \subset \R^2$ and inspect the roots $v_i$ of equation \eqref{eqn:vi-bz}. To this purpose, use the polar coordinate representation $v_i=r_i(\cos\theta_i,\sin\theta_i)^T$ for the velocity $v_i$, and write \eqref{eqn:vi-bz} as
\begin{equation}\label{eqn fixed point polar coordinates}
r_i{\cos\theta_i\choose\sin\theta_i} = -\dfrac1N\displaystyle\sum_{j\neq i}\nabla_{x_i}K(|x_i-x_j|)\,g\left(\dfrac{x_i-x_j}{|x_i-x_j|}\cdot{\cos\theta_i\choose\sin\theta_i}\right).
\end{equation}
By taking the inner product with $(-\sin\theta_i,\cos\theta_i)^T$ and
$(\cos\theta_i,\sin\theta_i)^T$, the vector equation \eqref{eqn fixed
  point polar coordinates} can be written as
 \begin{equation}
 \label{eqn:HRroots}
 H_i(x_1, \dots, x_N,\theta_i)=0, \qquad r_i = R_i(x_1,\dots,x_N,\theta_i),
 \end{equation}
 where the functions $H_i$, $R_i$ ($i=1,\dots,N$) are defined as 
 \begin{subequations}
\label{eqn:FR}
\begin{align}
H_i(x_1,\dots,x_N,\theta)&=-\dfrac1N\displaystyle\sum_{j\neq i}\nabla_{x_i}K(|x_i-x_j|)\cdot{-\sin\theta \choose\cos\theta}\,g\left(\dfrac{x_i-x_j}{|x_i-x_j|}\cdot{\cos\theta \choose\sin\theta}\right),\\
R_i(x_1,\dots,x_N,\theta)&=-\dfrac1N\displaystyle\sum_{j\neq i}\nabla_{x_i}K(|x_i-x_j|)\cdot{\cos\theta \choose\sin\theta}\,g\left(\dfrac{x_i-x_j}{|x_i-x_j|}\cdot{\cos\theta \choose\sin\theta}\right).
\end{align}
\end{subequations}
The advantage of using polar coordinates for $v_i$ is that the first equation in \eqref{eqn:HRroots} is a {\em scalar} equation to be solved for $\theta_i$, while the second equation yields $r_i$ explicitly in terms of $\theta_i$. Note that for a root $\theta_i$ of $H_i$ to be admissible, one needs $R_i$ evaluated at $\theta_i$ to be non-negative.  

The spatial configuration $\{x_1, x_2, \dots, x_N\}$ changes in time, as it evolves according to \eqref{eqn:fo-bz} and hence, the solutions $\theta_i$, $r_i$ of \eqref{eqn:HRroots} evolve in time as well, along with the configuration. Consequently,  jumps in $\theta_i$, $r_i$ can occur at certain spatial configurations through the dynamical evolution. We illustrate this point with an example.  

Consider the four particle ($i=1,\dots,4$) run presented in  \cite{EversFetecauRyzhik} --- see Figure~\ref{fig:trajectories zoom r theta}(a). For the purpose of this discussion, it is enough to consider the evolution of the root $\theta_1$ of $H_1$, corresponding to particle $1$ (top left particle in Figure~\ref{fig:trajectories zoom r theta}(a)). The solid black line in Figure \ref{fig:HR} shows the plot of $H_1$ as a function of $\theta$ at the initial time. Note that in general, the functions $H_i$ can have several roots, but once a {\em simple} root is selected at the initial time, there exists a (local) continuous solution to \eqref{eqn:fo-bz} that starts from that phase space configuration \cite{EversFetecauRyzhik}. For the run presented here, $\theta_1 \approx -1.00$ 
at the initial time. As the spatial configuration evolves in time,  $H_1$ changes its profile and the root $\theta_1$ evolves as well (see the dashed and dash-dotted black lines in Figure \ref{fig:HR}). The critical time is when $\theta_1$ becomes a {\em double} root of $H_1$ (see the dash-dotted line in Figure \ref{fig:HR}; the double root $\thetas_1$ is denoted by the filled circle); had the spatial configuration continued to evolve past this time, in the direction of the current velocity, the root $\theta_1= \thetas_1$ would be instantaneously lost.  To extend the dynamics of  \eqref{eqn:fo-bz} beyond breakdown, a jump  in $\theta_1$ (more precisely in the velocity $v_1$) would need to be enforced.

\begin{figure}%
\centering
\includegraphics[height=0.45\textwidth]{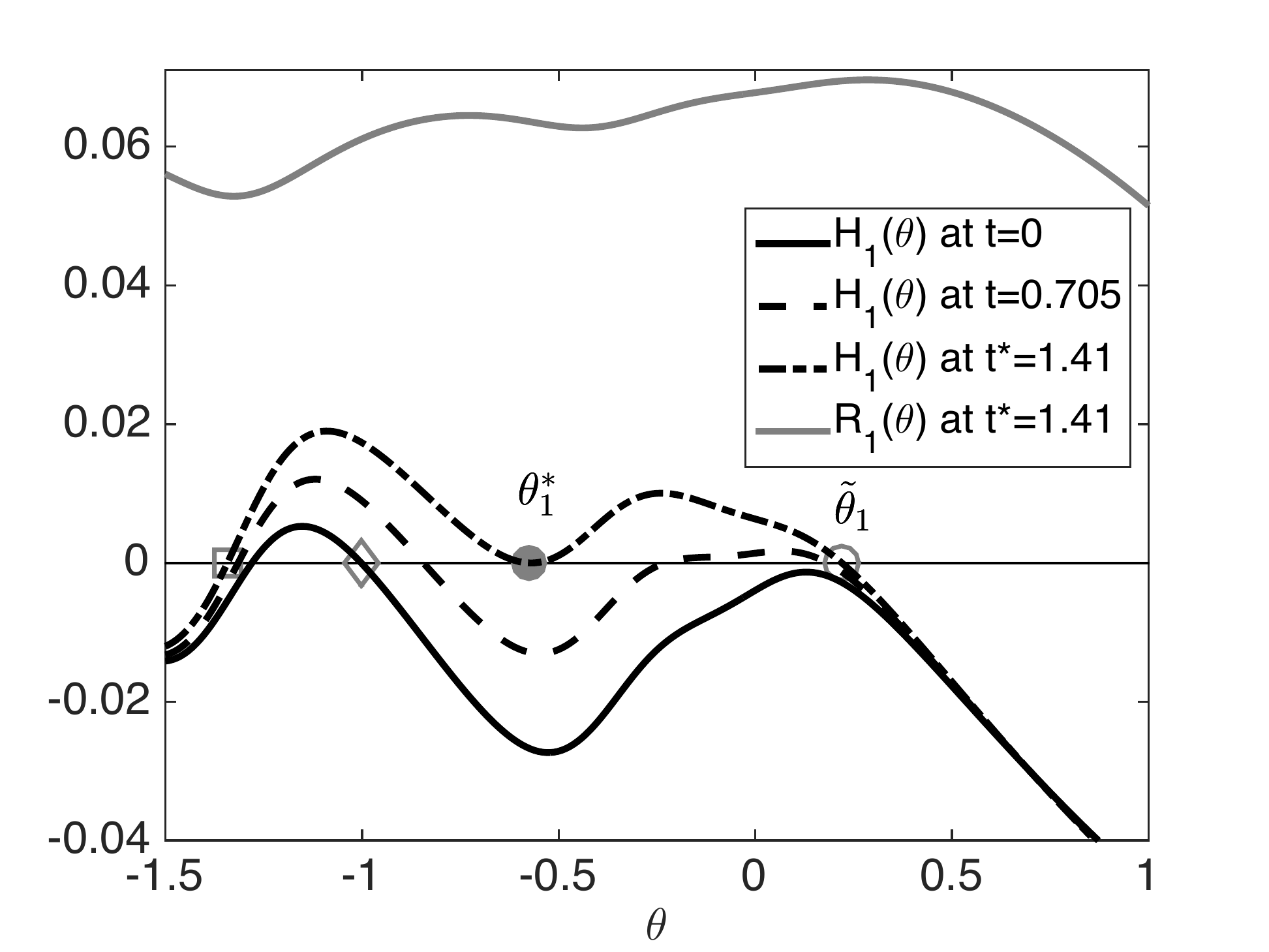}
\caption{The function $H_1$ associated to particle 1, drawn as a function of $\theta$ at three time instances: the initial time $t=0$, the breakdown time $t=t^*=1.41$ and the time halfway in between $t=t^*/2=0.705$ -- see the four particle simulation in Figure \ref{fig:trajectories zoom r theta}. At time $t=0$ (solid black line) a simple root is present at $\theta \approx -1.00$, indicated by a diamond. As time evolves, so does the function $H_1$. At time $t=t^*$ (dash-dotted line) the simple root has evolved into a double root at $\theta=\thetas_1\approx-0.57$ (filled circle). If the evolution is extended after breakdown in accordance with solutions of the relaxation model \eqref{eqn:so} --see Section \ref{sec:intro relax}-- then $\theta_1$ jumps to $\theta=\thetat_1\approx 0.22$, indicated by the open circle. Note that, at $t=t^*$, we have that $R_1(\thetat_1)>0$. The square at $\theta\approx -1.34$ indicates another simple root of $H_1$ at $t=t^*$.}
\label{fig:HR}%
\end{figure}

Breakdown times, characterized by double-root loss as in Figure \ref{fig:HR}, are the central point of discussion for the present work. Throughout the paper we denote these breakdown times generically by $t^\ast$, and we also add a superscript $^\ast$ to refer to the phase space configuration at $t^\ast$. It is important to note that at a breakdown time there are typically several roots of 
\eqref{eqn:vi-bz} that the velocity can jump to. For instance, in the simulation presented above, $H_1$ at breakdown (see the dash-dotted line in Figure \ref{fig:HR}) has several simple roots that $\theta_1$ can jump to (the ones within the domain of the plot are indicated by the open circle and the square). To offer a consistent, biologically meaningful mechanism to select a jump, the authors in \cite{EversFetecauRyzhik} propose the relaxation model \eqref{eqn:so}.

\subsection{Relaxation model} \label{sec:intro relax}
In \cite{EversFetecauRyzhik} it has been demonstrated numerically that the relaxation model \eqref{eqn:so} can be used to capture discontinuous solutions to \eqref{eqn:fo-bz}. Before a breakdown, the approximation of solutions to \eqref{eqn:fo-bz} with solutions to \eqref{eqn:so} is validated in fact (under certain assumptions on the initial phase space configuration) by the classical analytical results of Tikhonov \cite{Tikhonov1952,Vasileva1963}. Our interest here is the behaviour of solutions to  \eqref{eqn:so} upon approaching a breakdown time $t^\ast$ of \eqref{eqn:fo-bz}.

\begin{figure}%
\centering
\vspace{-4cm}
\includegraphics[width=0.8\textwidth]{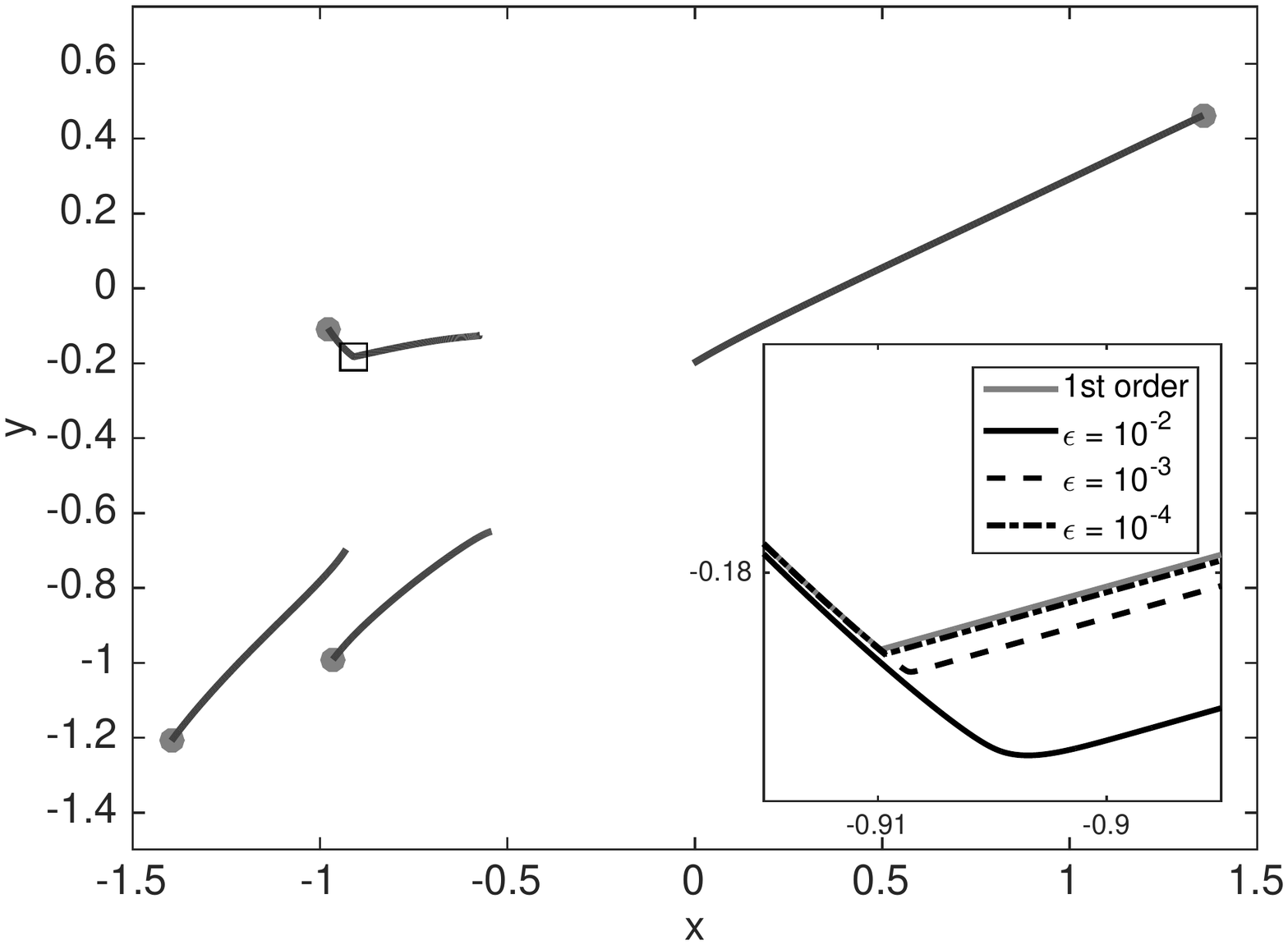}\\ \vspace{-3.7cm}
(a) \\ \vspace{0.25cm}
\includegraphics[height=0.35\textwidth]{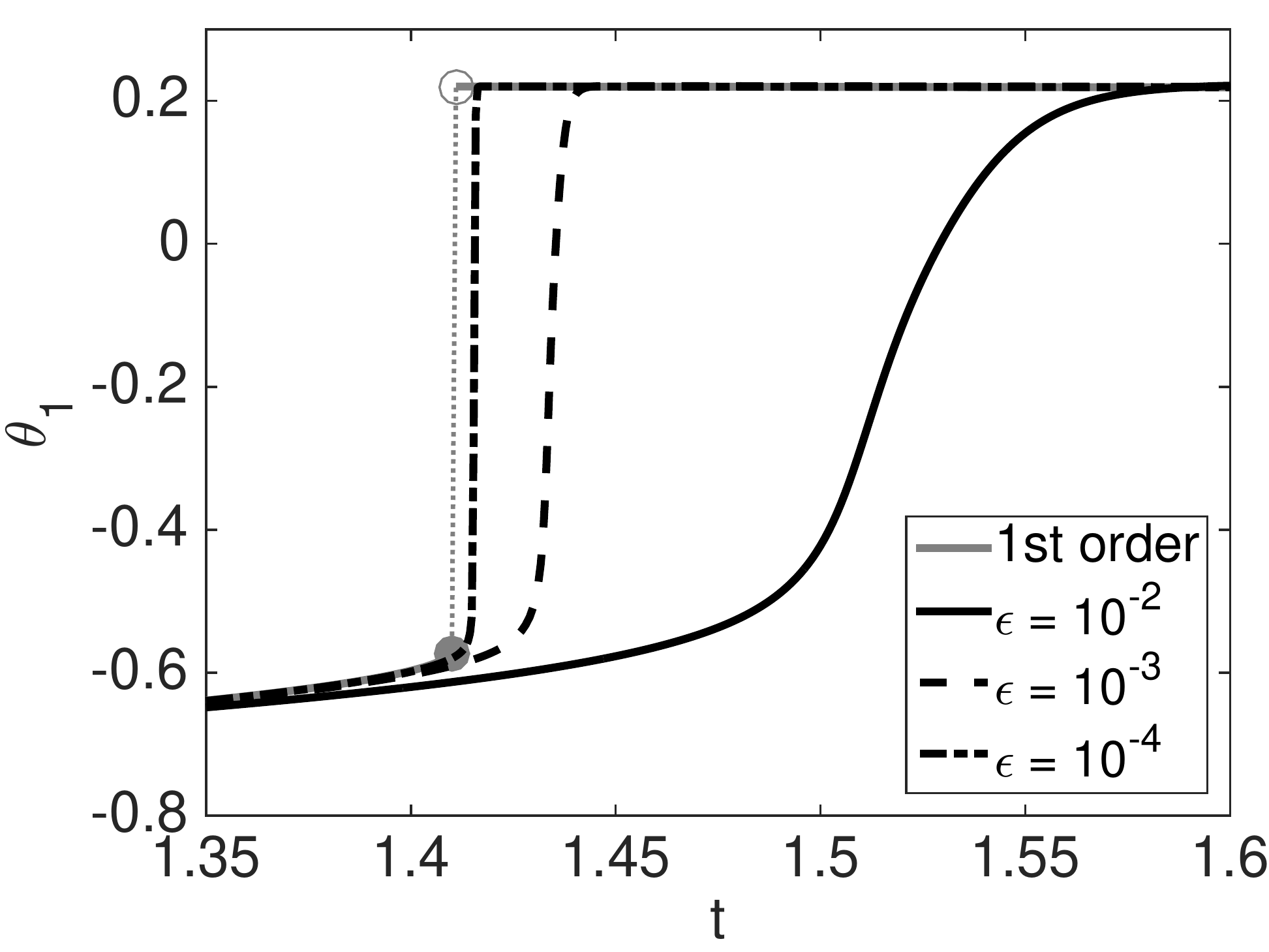}
$~~~$
\includegraphics[height=0.35\textwidth]{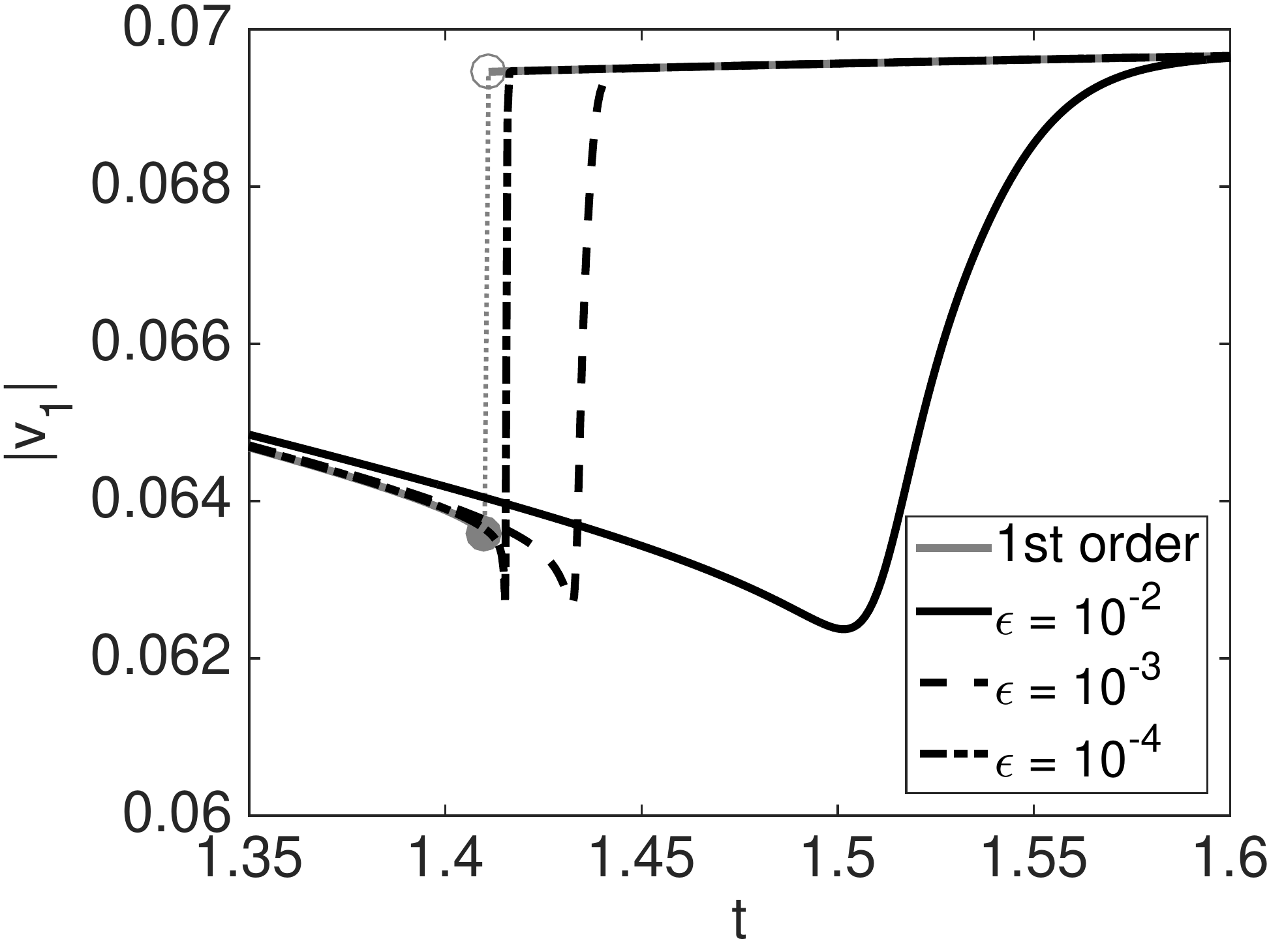}\\
(b) \hspace{0.45\textwidth} (c)
\caption{Time evolution of four particles. (a) The solid line in the main plot represents the solution of the anisotropic first-order model \eqref{eqn:fo-bz} starting from the filled circles. Breakdown occurs in the velocity of particle 1, which is the top left particle. The square indicates the position at which breakdown happens. To extend the evolution after breakdown, we enforce a jump in $\theta_1$, in accordance with solutions of the relaxation model \eqref{eqn:so}. Insert: Zoomed image near the breakdown time of model \eqref{eqn:fo-bz}. On top of the trajectory from the main plot (solid grey) we graph the solution of the relaxation model \eqref{eqn:so} for three values of $\eps$: $\eps=10^{-2},10^{-3}$, and $10^{-4}$. Note how the $\eps$-model \eqref{eqn:so} captures the discontinuities in velocity, as well as approximates solutions of \eqref{eqn:fo-bz} away from the jumps. (b) and (c) Evolution of the angle $\theta_1$ and the magnitude of the velocity $|v_1|$, respectively, around the breakdown time: solutions of \eqref{eqn:fo-bz} and of \eqref{eqn:so} for $\eps=10^{-2},10^{-3}$, and $10^{-4}$. The second-order model captures the sharp transitions in $\theta_1$ and $|v_1|$.}%
\label{fig:trajectories zoom r theta}%
\end{figure}

Figure \ref{fig:trajectories zoom r theta}(b) illustrates this behaviour; in this numerical simulation  \eqref{eqn:so} has been initialized with the same phase space configuration as the initial data for the first-order model. While before breakdown it remains within approximation error to the solution of  \eqref{eqn:fo-bz}, the solution to \eqref{eqn:so} steepens upon approaching $t^\ast$ and approaches, via a fast layer, a new root of \eqref{eqn:vi-bz}. Specifically, through a steep time layer, the direction of particle 1 transitions from $\thetas_1 \approx -0.57$, the double root of $H_1$ (filled circle), to $\thetat_1 \approx 0.22$, the root of $H_1$ indicated by an open circle -- see Figure \ref{fig:HR}. Once a jump selection has been identified, the first-order model can be reinitialized in the new direction $\thetat_1$ and hence, its time evolution can be continued through a breakdown.

In the present paper we provide a rigorous framework and proof for the jump selection process that occurs through the relaxation model.  Using polar coordinates for $v_i$, system \eqref{eqn:so}  reduces to:
\begin{subequations}
\label{eqn:so-polar}
\begin{align}
    \frac{\dx_i}{\dt} &= r_i (\cos \theta_i, \sin \theta_i)^T, \\
    \eps \dfrac{\dtheta_i}{\dt} &= \dfrac1r_i\, H_i(x_1,\dots,x_N,\theta_i),\label{eqn:so-polar b}
  \\
  \eps \dfrac{\dr_i}{\dt} &= -r_i + R_i(x_1,\dots,x_N,\theta_i).\label{eqn:so-polar c}
      \end{align}
\end{subequations}

We first simplify model \eqref{eqn:so-polar} by focusing on the particle $i$ that undergoes a jump in velocity at breakdown. We fix the locations of the other particles $x_j$, with $j \neq i$, and initialize \eqref{eqn:so-polar} at the onset $t^\ast$ of the discontinuity. For convenience of notations we drop the index $i$; the location $x_i$ is now denoted by $\bx = (x,y)$. With these simplifications, system \eqref{eqn:so-polar} initialized at breakdown reads:
\begin{equation}\label{eqn:syst x polar 2D}
\left\{
  \begin{array}{l}
  	\dfrac{\rm d}{\dt}\left(\begin{array}{c} x\\y \end{array}\right) = r \, \left(\begin{array}{c} \cos\theta\\\sin\theta \end{array}\right) ,\\
  \\
   \eps \dfrac{\dtheta}{\dt} = \dfrac1r\, H(x,y,\theta), \\
  \\
    \eps  \dfrac{\dr}{\dt} = -r + R(x,y,\theta) \,,
	\\ \\
	\bx(0)=\bxs, \theta(0)=\thetas, r(0)=\rs.
  \end{array}
\right.
\end{equation}

A typical profile of $H(\bxs,\theta)$ is shown in Figure \ref{fig:HR} (dash-dotted line). Most relevant for the analysis, it has a double root at $\thetas$ and another root $\thetat$ such that $H(\bxs,\theta)>0$ for $\theta \in (\thetas,\thetat)$. As suggested by numerics, and as proved in Theorem \ref{thm: conv quad lin 2D R neg} below, the solution to \eqref{eqn:syst x polar 2D} converges, within a transition layer that scales with $\eps^{2/3}$, to $(\bxs,\thetat,\rt)$, with $\rt = R(\bxs,\thetat)$. Indeed, initiated at $t^\ast$, the profile of $H$ ``rises" and the solution $\theta(t)$ (see in particular the equation for $\theta$ in \eqref{eqn:syst x polar 2D}) is expected to escape, through a bottleneck, to $\thetat$. As seen in the proof of Theorem \ref{thm: conv quad lin 2D R neg}, making these ideas precise require a lot of care, in particular since one has to rule out the possibility of a strong bottleneck effect, in which the solution would be trapped near $\thetas$ long enough for the profile $H$ to ``lower" and gain new roots.

To set the main ideas used in the proof, we start by discussing a toy problem in one dimension. Then, we extend the toy problem to a one-dimensional model with a more general right-hand side. Since proving the desired convergence result for this general 1D problem is quite involved, we choose to simply list the main results and present instead a full proof for the main model in 2D. In the Appendix we revisit the 1D problem and discuss how a proof in one dimension can be obtained for very general right-hand sides. The major merit of the result in one dimension (Theorem~\ref{thm: conv k ell 1D}) is that it constitutes an extension of the Tikhonov's theorem \cite{Tikhonov1952,Vasileva1963} in nonsmooth/discontinuous settings. 


\section{One-dimensional problem}
\label{sec:oned}

\subsection{One-dimensional toy problem.}\label{sec:toy} In this section we use the following simple initial value problem in one dimension with a fixed $\eps$ as an illustration of the main idea of our analysis for the 2D problem:
\begin{equation} \label{eqn:syst x 1D toy}
\left\{
  \begin{array}{l}
  	\dfrac{\dx}{\dt} = v,\\
  	\\
	\eps \dfrac{\dv}{\dt} = -h(v)\cdot (v-\vs)^2(v-\vt) + (x-\xs)=:\F(x,v),\\
	\\
	x(0)=\xs,  \quad v(0)=\vs,
  \end{array}
\right.
\end{equation}
where the two constants $\vs, \vt$ satisfy $0<\vs<\vt$ and there exists a constant $h_0 > 0$ such that $h(v)> h_0$ for all $v\in[\vs,\vt]$. The right-hand side $\F(x,v)$ evaluated at the initial configuration $\xs$  has a double root at the initial velocity $\vs$ and a simple root at $\vt$. Such profile $\F(\xs,v)$ is qualitatively similar to the plot of $H_1(\theta)$ at the breakdown time $t^*$ in model \eqref{eqn:fo-bz}; see the dash-dotted line in Figure \ref{fig:HR}.

In what follows, we outline three stages that system~\eqref{eqn:syst x 1D toy} undergoes when it makes the transition from $\vs$ to the neighbourhood of $\vt$. These three stages are: formation of the bottleneck, escape from the bottleneck, and convergence to $\vt$. 

\subsubsection*{\underline{Formation of the bottleneck}} 

Since the initial velocity $\vs>0$, it holds that $x(t)>\xs$ for $t>0$ sufficiently small. Consequently,  the profile $\F(x(t),\cdot)$ ``rises", loses its root at $\vs$, and becomes strictly positive near $\vs$. Hence, $v(t)>\vs$ for $t>0$ small, as $\dv/\dt$, immediately after being initialized at $0$, becomes positive and remains so in a short time interval. Note though that in this short initial interval, $\F(x(t),\cdot)$ is very small near $v=\vs$, and a bottleneck situation has been created. 

Define 
\begin{equation}
\label{eqn:eta1d}
\eta(t) := \F(x(t),v(t)) - \F(\xs,v(t)) = x(t)-\xs.
\end{equation} 
By referring to Figure \ref{fig:eta} for an illustration, we note that $\eta$ is the increment associated to the bottleneck. Tracking the evolution of such a quantity is an important idea used later in the analysis. In the toy problem considered here, the evolution of $\eta$ is simply governed by $\deta/\dt=v$.

\begin{figure}[ht]%
\centering
\begin{tikzpicture}[>= latex]
                \begin{scope}[scale=3]
                \pgfmathsetmacro\vt {1};
                \pgfmathsetmacro\vEnd {\vt+0.2};
                \pgfmathsetmacro\vStart {-0.5};
                \pgfmathsetmacro\dv {0.02};
                \pgfmathsetmacro\vStNext {\vStart+\dv};
                \pgfmathsetmacro \up {0.3};
					
					\draw[line width=0.75] (-1,0) -- (2,0) node[right] {$v$};
					\draw[line width=0.75] (0,0.04) -- (0,-0.04) node[below] {$v^*$};
					\draw[line width=0.75] (\vt,0.04) -- (\vt,-0.04)  node[below left] {$\tilde{v}$};
					
					\foreach \v  in {\vStart,\vStNext,...,\vEnd}	{
					\pgfmathsetmacro\vprev {\v-\dv};
					\pgfmathsetmacro\eta {-2*\v*\v*(\v-\vt)};
					\pgfmathsetmacro\etaprev {-2*\vprev*\vprev*(\vprev-\vt)};	
					\draw[line width=1.25] (\vprev,\etaprev) -- (\v,\eta);
					
					\pgfmathsetmacro\etaUp {-2*\v*\v*(\v-\vt)+\up};
					\pgfmathsetmacro\etaUpprev {-2*\vprev*\vprev*(\vprev-\vt)+\up};	
					\draw[line width=1.25] (\vprev,\etaUpprev) -- (\v,\etaUp);
					}
					
					\draw[<->,line width=0.75] (0,0) -- (0,\up) node[pos=.5,right] {$\eta(t)$};
					
					\draw (\vStart,1.2)  node[left] {$\mathcal{F}(x(t),\,\cdot\,)$};
					\draw (\vStart,0.6)  node[left] {$\mathcal{F}(x^*,\,\cdot\,)$};

				\end{scope}
\end{tikzpicture}
\caption{Illustration of $\eta(t)$ defined in \eqref{eqn:eta1d};  $\eta$ quantifies the initial bottleneck that occurs in the dynamics of $v$ -- see \eqref{eqn:syst x 1D toy} and \eqref{eqn:syst x 1D toy-eta}.
}
\label{fig:eta}%
\end{figure}
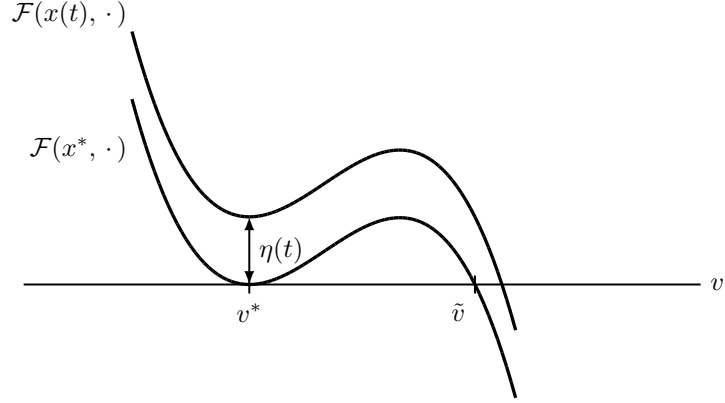

By a change of independent variables $(x, v) \to (\eta, v)$, \eqref{eqn:syst x 1D toy} becomes
\begin{equation} \label{eqn:syst x 1D toy-eta}
\left\{
  \begin{array}{l}
  	\dfrac{\deta}{\dt} = v,\\
  	\\
	\eps \dfrac{\dv}{\dt} = \F(\xs,v) + \eta.\\
	\\
	\eta(0)=0, \quad v(0)=\vs.
  \end{array}
\right.
\end{equation}
Since $v(0)=\vs>0$, there is a (short) initial interval on which $v>0$, hence $\deta/\dt=v>0$ and thus $\eta>0$, since $\eta(0)=0$. Moreover $\F(\xs,v)\geqs0$ for any $v$ in a neighbourhood of $\vs$. So there is a time interval on which
\begin{align*}
   0 \leq \F(\xs, v) \leq c_0 \,,
   \qquad \eta>0 \,, \qquad\text{ and thus}
\qquad
\frac{\dv}{\dt} > 0, 
\end{align*}
where $c_0>0$ is some constant. Consequently, $v(t) \in [\vs, \vt]$ for all $t$ in this interval, and thus
\begin{equation*}
\vs \leq \frac{\deta}{\dt} \leq \vt.
\end{equation*}
Therefore, on this time interval, 
\begin{align*}
    \vs t \leq \eta \leq \vt t \,,
\qquad
   0 \leq v - \vs \lesssim \frac{t + t^2}{\eps} \,,
\qquad
  v(t) \in [\vs, \vt] \,.
\end{align*}
This shows that for a fixed $\eps$,
\begin{align*}
   \F(\xs, v) = - h(v) (v - \vs)^2 (v - \vt)
= \frac{1}{\eps^2}\BigO(t^2) 
\ll \eta(t)
\qquad
\text{near $t = 0$.}
\end{align*}
Consequently, immediately after initialization, $\eta$ is the dominant term in the $v$-equation. As time evolves however, we note that 
\begin{equation} \label{eqn:lower bnd v-vs}
\eps\,\dfrac{\dv}{\dt} \geqs \eta \geqs  \vs t
\qquad\Rightarrow\qquad
v(t)-\vs \geqs\frac{\vs}{2\eps} t^2,
\end{equation}
and hence, $\F(\xs, v) \sim (v-\vs)^2$ may grow to be comparable to $\eta$ for $t$ large enough. We define the end time of the bottleneck as the moment when
\begin{align*}
   \vpran{t^2/\eps}^2 \sim t \,,
\end{align*}
which gives an interval of size $\BigO(\eps^{2/3})$. Therefore, by this definition of the end time, the bottleneck extends for a time period of order $\BigO(\eps^{2/3})$.


\subsubsection*{\underline{Escaping the bottleneck}} Denote the end of the bottleneck time as $t_0$. Then by~\eqref{eqn:lower bnd v-vs} the velocity satisfies
\begin{align*}
    v(t_0) - \vs \gtrsim \frac{\vs}{2 \eps} \cdot \eps^{4/3}
    = \frac{\vs}{2} \eps^{1/3} \,.
\end{align*}
Starting with such initial data and going beyond the bottleneck interval of $\BigO(\eps^{2/3})$, the evolution of $v$ is mainly driven by the quadratic nonlinearity $\F(\xs,v) \sim (v-\vs)^2$ such that 
\begin{equation*}
\eps \dfrac{\dv}{\dt} \gtrsim (v-\vs)^2,
\end{equation*}
where we used that $\eta>0$ for as long as $v$ increases. Solving this differential inequality gives 
\begin{align*}  
   v(t) - \vs 
\gtrsim
   \dfrac{\eps^{1/3}}{\dfrac{2}{\vs} - \dfrac{t - t_0}{\eps^{2/3}}} \,.
\end{align*}
The dynamics above then guarantees that $v$ moves an $\Ord(1)$ distance away from $\vs$ within another $\BigO(\eps^{2/3}) - \BigO(\eps)$ time interval, thus escaping the bottleneck. 


\subsubsection*{\underline{Converging to $\vt$}} In the third stage where $v$ starts at an $\BigO(1)$ distance from $\vs$, the dominant term in the $v$-equation becomes $\F \sim -(v - \vt)$. The main dynamics is thus driven by 
\begin{align*}
    \eps \frac{\dv}{\dt} \gtrsim - (v - \vt),
\end{align*}
with the initial data less than $\vt$. Therefore, within a time interval of order (arbitrarily close to) $\BigO(\eps)$ the velocity $v$ is attracted to $\vt$. Note that through the combined three stages, the total time interval is of size $\Ord\bigl(\eps^{2/3}\bigr)$. Hence $x(t)$ has moved an $\Ord\bigl(\eps^{2/3}\bigr)$ distance from $\xs$. Consequently, the root of $\F(x,\cdot)$ that is being reached asymptotically is located within $\Ord\bigl(\eps^{2/3}\bigr)$ distance from $\vt$. 
 To conclude, within a time interval of size $\Ord\bigl(\eps^{2/3}\bigr)$,  solutions to \eqref{eqn:syst x 1D toy-eta} make the transition from $v=\vs$ to an $\Ord\bigl(\eps^{2/3}\bigr)$-neighbourhood of $\vt$.  
 
From an asymptotic/formal analysis point of view, the arguments above are fairly standard. In fact, a modified version of \eqref{eqn:syst x 1D toy-eta}  is textbook material \cite[Section 6.4]{Holmes_book}; the problem studied in \cite{Holmes_book} is 
 \begin{align*}
     \eps \dfrac{\dv}{\dt} = (v-\vs)^2(v-\vt) + t \,,
\end{align*} 
where the term $t$ is the driving mechanism behind the loss of the root at $v=\vs$. By the methods developed in this paper, such a formal 
argument can be made rigorous however.


\subsection{One-dimensional problem with general right-hand side.} In this part we present an extension of the toy problem \eqref{eqn:syst x 1D toy-eta} that allows for general right-hand sides in the $v$-equation. 
As mentioned before, this result in one dimension 
extends Tikhonov's theorem \cite{Tikhonov1952,Vasileva1963} in nonsmooth/discontinuous settings. 

%
The extended model has the form
\begin{equation}\label{eqn:syst x 1D general}
\left\{
  \begin{array}{l}
  	\dfrac{\dx}{\dt} = v,\\
  	\\
	\eps \dfrac{\dv}{\dt} = \F(x,v),\\
	\\
	x(0)=x_0, \quad v(0)=v_0.
  \end{array}
\right.
\end{equation}
The hypotheses on the right-hand side $\F$ are more general than in the toy problem of Section \ref{sec:toy}. We assume that $x_0$ is ``close" to some $\xs$ for which the function $\F(\xs,\cdot)$ resembles the dash-dotted line in Figure \ref{fig:eta}. However, instead of a \textit{double} root, we assume that $\F(\xs,\cdot)$ has a root at $\vs$ of \textit{even} multiplicity $2k$, for arbitrary $k\geqs1$. In particular, this implies that $\F(\xs,\cdot)$ does not cross the horizontal axis at $\vs$. Moreover, we assume that $\F(\xs,\cdot)$ has another root at $\vt>\vs$, where $\F(\xs,\cdot)$ \textit{does} cross the horizontal axis. Thus, this root is of \textit{odd} multiplicity $2\ell-1$, for some $\ell\geq1$, generalizing the \textit{single} root appearing in \eqref{eqn:syst x 1D toy}.

These properties of $\F(\cdot,\cdot)$ are made precise in the following assumption. Recall that $\xs,\, \vs,\,\vt\in\R$ are given, with $\vs<\vt$. 
%
%
\begin{assumption}\label{ass:F 1d k ell}
Let the integers $k, \ell \geq 1$ be given. Assume that $\F\in C^{\max\{2k,2\ell-1\}}(\R^2;\R)$ is such that
\begin{itemize}
\item $\partial^j_v\F(\xs,\vs)=0$ for all $j\in\{0,\ldots,2k-1\}$, \; $\partial^{2k}_v\F(\xs,\vs)>0$, 
\smallskip

\item $\partial^{j}_v\F(\xs,\vt)=0$ for all $j\in\{0,\ldots,2(\ell-1)\}$, \;  $\partial^{2\ell-1}_v\F(\xs,\vt)<0$, 
\smallskip

\item $\F(\xs,u)>0$ for all $u\in(\vs,\vt)$.
\end{itemize}
\end{assumption}

\noindent Assumption~\ref{ass:F 1d k ell} implies that there is a function $h \in C(\R)$, strictly positive on $[\vs,\vt]$, such that
\begin{equation}\label{eqn:F factors}
   \F(\xs,v) 
   = -h(v)\cdot(v-\vs)^{2k}(v-\vt)^{2\ell-1},
\qquad
   h(v) \geq h_0 > 0 \,,
\end{equation}
where $h_0$ is a constant independent of $\eps$. 

\begin{theorem}\label{thm: conv k ell 1D}
Suppose $\F$ satisfies the conditions in Assumption \ref{ass:F 1d k ell}. Suppose 
\begin{align*}
   \partial_x\F(\xs,\vs)\cdot \vs>0 \,.
\end{align*} 
Let $(x,v)$ be the solution to \eqref{eqn:syst x polar 2D}, with initial conditions $x(0)=x_0$, $v(0)=v_0$. Let $\eps>0$ be given. Suppose there exist constants $a_1,\,a_2>0$ such that
\begin{align*}
   |x_0-\xs|\leqs a_1\,\eps \,,
\qquad
   |v_0 - \vs|\leqs a_2\,\eps \,.
\end{align*} 
Then for all $k\geqs1$ and $\ell\geqs1$ and $\eps>0$ sufficiently small, there are constants $C$, $c$ and $\nu$ (independent of $\eps$) and a time $\tau\leqs C\,\eps^{2k/(4k-1)}$ such that 
\begin{align*}
    |v(\tau)-\vt|\leqs c\,\eps^{\nu} \,,
\end{align*}
where
\begin{align*}
\nu=\frac{2k}{4k-1} \;\;\text{ if }\ell=1,\quad  \text{and} \quad \nu = \frac{2k-1}{(4k-1)(2\ell-1)} \;\;\text{ if $\ell>1$.}
\end{align*}
\end{theorem}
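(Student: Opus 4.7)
My plan is to extend the three-stage analysis of the toy problem (Section~\ref{sec:toy}) to the higher-multiplicity setting of Assumption~\ref{ass:F 1d k ell}. Setting $\eta(t) := x(t) - \xs$, the system becomes $\deta/\dt = v$ and $\eps\,\dv/\dt = \F(\xs,v) + [\F(\xs+\eta,v) - \F(\xs,v)]$; by Taylor's theorem the bracket equals $\partial_x\F(\xs,v)\,\eta + \BigO(\eta^2)$, and the hypothesis $\partial_x\F(\xs,\vs)\,\vs > 0$ ensures this correction has the same sign as $\eta$ when $v$ is close to $\vs$. Using the factorization $\F(\xs,v) = -h(v)(v-\vs)^{2k}(v-\vt)^{2\ell-1}$ of~\eqref{eqn:F factors}, the dominant behaviour is $\F(\xs,v) \sim C_1(v-\vs)^{2k}$ near $\vs$ and $\F(\xs,v) \sim -C_2(v-\vt)^{2\ell-1}$ near $\vt$, for suitable positive constants $C_1,C_2$.

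For \emph{Stage~1 (bottleneck formation)}, since $v(0)$ is within $\BigO(\eps)$ of $\vs>0$, the equation $\deta/\dt = v \approx \vs$ yields $\eta(t) \approx \vs\,t$; neglecting the subdominant nonlinearity then gives
\begin{align*}
    \eps\,\dfrac{\dv}{\dt} \gtrsim \partial_x\F(\xs,\vs)\,\vs\,t,
\end{align*}
which integrates to $v(t)-\vs \gtrsim t^2/\eps$. The bottleneck exits when the nonlinear term catches up, i.e., when $(t^2/\eps)^{2k} \sim t$, giving the key exit time $t_0 \sim \eps^{2k/(4k-1)}$ and exit velocity $v(t_0)-\vs \sim \eps^{1/(4k-1)}$. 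In \emph{Stage~2 (escape)} the autonomous nonlinearity dominates, giving $\eps\,\dv/\dt \geqs C_1(v-\vs)^{2k}$; direct integration of this differential inequality yields a blow-up that carries $v$ from $\vs + \BigO(\eps^{1/(4k-1)})$ to an $\BigO(1)$ distance of $\vs$ within another $\BigO(\eps^{2k/(4k-1)})$ time interval. Throughout Stages~1--2, the boundedness of $v$ keeps $\eta = \BigO(\eps^{2k/(4k-1)})$, and the $\BigO(\eta^2)$ and initial-data remainders are subdominant on these scales.

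\emph{Stage~3 (approach to $\vt$)} splits on $\ell$. For $\ell=1$, the linearization at $\vt$ is nonzero, and $\eps\,\dv/\dt \approx -C_2(v-\vt) + \eta$ exponentially attracts $v$ on the timescale $\eps|\log\eps|$ to the quasi-steady state at distance $\eta/C_2 = \BigO(\eps^{2k/(4k-1)})$ from $\vt$, producing $\nu = 2k/(4k-1)$. For $\ell>1$, the linearization vanishes and I would use the cruder estimate $\eps\,\dv/\dt \geqs C_2|v-\vt|^{2\ell-1}$ (valid while $v<\vt$): any time interval on which $|v-\vt|\geqs c\eps^\nu$ has length at most $c^{-(2\ell-1)}\eps^{1-\nu(2\ell-1)}$, so choosing $\nu = (2k-1)/((4k-1)(2\ell-1))$ makes $1-\nu(2\ell-1) = 2k/(4k-1)$ and forces $v$ to enter $\vt+[-c\eps^\nu,c\eps^\nu]$ within the allotted window; a separate easy check shows that once there, the quasi-steady balance keeps any overshoot above $\vt$ of order $\eps^{2k/((4k-1)(2\ell-1))}$, which is smaller than $\eps^{\nu}$. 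Summing the three stages yields $\tau \leqs C\eps^{2k/(4k-1)}$ with $|v(\tau)-\vt|\leqs c\eps^\nu$, as required.

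The main obstacle I expect is the rigorous interlocking of these stages: the bottleneck exit must be certified by a Gronwall-type argument on $v-\vs$ that exploits the sign of $\eta$ as the leading driving term while preventing the $\BigO(\eta^2)$ Taylor remainder from reversing the monotonicity of $v$ or $\eta$, and the passage through intermediate velocities (where neither local expansion of $\F$ is dominant) requires a covering argument for $[\vs,\vt]$ that leverages the strict positivity $\F(\xs,u)>0$ on the open interval --- all with constants uniform in $\eps$.
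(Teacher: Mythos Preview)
Your proposal is correct and follows essentially the same bottleneck/escape/convergence architecture as the paper's proof sketch (Appendix~\ref{app:proof thm 1D gen ic}), with the same governing scales $t_0\sim\eps^{2k/(4k-1)}$, $v(t_0)-\vs\sim\eps^{1/(4k-1)}$, and the same split on $\ell$ in the final stage.

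Two bookkeeping differences are worth noting. First, the paper takes $\eta(t):=\F(x(t),v(t))-\F(\xs,v(t))$ rather than your $\eta=x-\xs$; this makes the $v$-equation exactly $\eps\,\dv/\dt=\F(\xs,v)+\eta$ at the cost of a more complicated $\eta$-equation $\deta/\dt=A+B$, where $B$ contains a factor $\eps^{-1}$ and must be shown to be dominated by $A$. Second, and relatedly, the paper uses \emph{four} intervals rather than three: your Stage~2 is split into an Interval~II (where $A+B\geq 0$, hence $\eta>0$, and $v-\vs$ is pushed from $\BigO(\eps^{1/(4k-1)})$ to the slightly larger scale $\BigO(\eps^{(4k-3)/(2(2k-1)(4k-1))})$) and an Interval~III (where $\eta$ is allowed to be negative but is small enough to be absorbed into $(v-\vs)^{2k}$). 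This extra subdivision is precisely the ``interlocking'' you flag as the main obstacle: at the exit of your Stage~1 the terms $(v-\vs)^{2k}$ and the $x$-correction are \emph{comparable}, so one needs an intermediate step exploiting the sign of the correction near $\vs$ before the autonomous nonlinearity can take over unconditionally. Your formulation via $\eta=x-\xs$ makes this step slightly cleaner (since $\eta>0$ automatically and $\partial_x\F(\xs,v)>0$ near $\vs$ by continuity), but the substance is the same. Your time-budget argument for $\ell>1$ in Stage~3 is a valid shortcut that recovers the exponent $\nu=(2k-1)/((4k-1)(2\ell-1))$ directly.
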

The proof of Theorem~\ref{thm: conv k ell 1D} resembles that for the two-dimensional anisotropic model in the case where $r$ is a constant; see Theorem \ref{thm: conv quad lin 2D R neg}. We only briefly sketch its proof in Appendix~\ref{app:proof thm 1D gen ic}.

The special case that we obtain by choosing $k=\ell=1$ in Theorem \ref{thm: conv k ell 1D} corresponds to the toy problem presented in Section \ref{sec:toy}.

Theorem \ref{thm: conv k ell 1D} extends Tikhonov's theorem  \cite{Tikhonov1952,Vasileva1963} in one spatial dimension, to the case where the right-hand side $\F$ has a root (at $v=\vs$) that gets lost. Note that such situation might come into existence dynamically: we may have a time interval of smooth evolution before we encounter root loss -- see Section \ref{sec:intro fo}, where we discussed this phenomenon in two dimensions. On this initial ``smooth" interval before root loss, Tikhonov's theorem makes sure that the solutions of \eqref{eqn:syst x 1D general} and its first-order counterpart are $\Ord(\eps)$ close (possibly except for some initial layer). This is exactly why we allow for $\Ord(\eps)$ variations in the initial conditions of Theorem \ref{thm: conv k ell 1D}: it is as if we reinitialize our system at the time of root loss, while by Tikhonov's theorem we may expect the deviations from $\xs$ and $\vs$ to be at most $\Ord(\eps)$.

\section{Two-dimensional relaxation model}
\label{sec:2D-varr}

In this section we investigate the anisotropic relaxation model \eqref{eqn:so-polar}. We will first treat Equation~\eqref{eqn:syst x polar 2D} where only one particle is considered moving, while the general model \eqref{eqn:so-polar} for the evolution of $N$ particles is addressed in Remark~\ref{rem:N moving}.

We first consider model~\eqref{eqn:syst x polar 2D} for a single particle. In general, we do not expect the system to start exactly at the breakdown state as in~\eqref{eqn:syst x polar 2D}. Instead, we relax the initial condition in~\eqref{eqn:syst x polar 2D} and assume that the initial data is close to $(\bxs, \thetas, \rs)$. More specifically, let $(\bx_0, \theta_0, r_0)$ be the initial data that such that 
\begin{align} \label{cond:initial}
   |\bxo-\bxs|\leqs a_1\,\eps,
\qquad
   |\thetao-\thetas|\leqs a_2\,\eps,
\qquad
   |\ro-\rs|\leqs a_3\,\eps,
\end{align} 
for some constants $a_1, a_2, a_3 > 0$. 

Denote 
\begin{align} \label{def:F}
   \F(\bx, \theta, r) := \frac{1}{r} H(\bx, \theta),
\end{align}
which represents the right-hand-side of the equation for $\theta$ in \eqref{eqn:syst x polar 2D}. Note that we use the same notation $\F$ as in the one-dimensional problem (see \eqref{eqn:syst x 1D toy}), to better parallel with the considerations made in Section \ref{sec:oned}. There should be no danger of confusion, in particular since in two dimensions $\F$ is a function of four scalar variables (as opposed to a function of two variables in one dimension).

Also, as an analogue of~\eqref{eqn:eta1d}, and preserving the same notation symbol,  let $\eta=\eta(t)$ be given by
\begin{equation}
\begin{aligned} \label{eqn:def eta 2d general}
   \eta(t)
   := \F(\bx(t),\theta(t),r(t))-\F(\bxs,\theta(t),r(t))
   = \frac{1}{r(t)} H(\bx(t),\theta(t)) - \frac{1}{r(t)} H(\bxs,\theta(t)).
\end{aligned}
\end{equation}
As in Section \ref{sec:oned} (see for instance Figure \ref{fig:eta}), $\eta$ is used to quantify the bottleneck.
Now reformulate~\eqref{eqn:syst x polar 2D} as
\begin{equation} \label{eqn:syst eta 2D general}
\left\{
  \begin{array}{l}
  	\!\dfrac{\deta}{\dt} 
	= \overbrace{\nabla_\bx\F(\bx,\theta,r)\cdot \bv}^{=:A} 
	  +\! \overbrace{\vpran{\partial_\theta\F(\bx,\theta,r)-\partial_\theta\F(\bxs,\theta,r)} \dfrac{\dtheta}{\dt}}^{=:B} 
\!+ \overbrace{\vpran{\partial_r\F(\bx,\theta,r)-\partial_r\F(\bxs,\theta,r)}\dfrac{\dr}{\dt}}^{=:C}, \\
  	\\
	\eps \dfrac{\dtheta}{\dt} = \dfrac1r H(\bxs,\theta)+\eta,\\ 
	\\
  \eps \dfrac{d r}{dt} = -r + R(\bx,\theta),\\
	\\
	(\eta(0), \theta(0), r(0)) = (\eta_0, \theta_0, r_0) \,,
  \end{array}
\right.
\end{equation}
where we have denoted
\begin{align*} \label{def:eta-0}
   \eta_0 = \F(\bx_0,\theta_0,r_0)-\F(\bxs,\theta_0,r_0) \,.
\end{align*}
Note that the spatial variable $\bx$ appears explicitly in the dynamics for $\eta$;  its evolution is governed by ${\rm d}\bx/\dt=\bv=r\,(\cos\theta,\sin\theta)^T$.

The main assumptions for $H$ are
\begin{assumption}\label{ass:H 2d quad lin}
Assume that $H\in C^2(\R^2\times\R;\R)$. Suppose there exist two constants $\thetat > \thetas > 0$ such that $H$ satisfies
\begin{itemize}
\item $H(\bxs,\thetat)=0$, \quad $H(\bxs,\thetas)=\partial_\theta H(\bxs,\thetas)=0$,
\smallskip

\item $\partial^2_\theta H(\bxs,\thetas)>0$, \quad
$\partial_\theta H(\bxs,\thetat)<0$,
\smallskip

\item $H(\bxs,\phi)>0$ for all $\phi\in(\thetas,\thetat)$.
\end{itemize}
\end{assumption}
\noindent Assumption~\ref{ass:H 2d quad lin} guarantees that there is a function $h \in C(\R)$ strictly positive on $[\thetas,\thetat]$ such that
\begin{equation} \label{eqn:H poly theta}
   H(\bxs,\phi) = -h(\phi)\cdot(\phi-\thetas)^2(\phi-\thetat) \,,
\qquad
   h(\phi) \geq h_0 > 0 \,,
\qquad
   \phi \in [\thetas, \thetat] 
\end{equation}
for some 
constant $h_0 > 0$. The motivation for such an assumption on $H$ was presented in Section \ref{sec:intro fo} -- see for example the dash-dotted line in Figure \ref{fig:HR}.

The assumptions we make on the function $R$ are very unrestrictive -- see the statement of Theorem \ref{thm: conv quad lin 2D R neg}. In particular, we assume that it is a bounded function and also, that $R(\bxs,\cdot)$ is strictly positive at $\thetas$ and $\thetat$.

Our main goal for this paper is to justify the jump discontinuity of the velocity of the particle as $\eps \to 0$. Although the velocity now has two components $r$ and $\theta$, 
the main dynamics of~\eqref{eqn:syst eta 2D general} is driven by the $\theta$-equation. Hence, we first show the jump of $\theta$ from $\thetas$ to $\thetat$ within a time interval that vanishes with $\Eps$ and then show the jump of $r$ induced by the jump of $\theta$. 

To show the transition of $\theta$ we follow a similar approach as for the toy problem~\eqref{eqn:syst x 1D toy}. Taken into account the extra terms in system~\eqref{eqn:syst eta 2D general} compared to~\eqref{eqn:syst x 1D toy}, we will divide the whole transition process into four stages:
\begin{itemize}
\item Interval I: Within this interval, a bottleneck will form for $\theta$. At the end of the interval, we have that $\theta - \thetas = \BigO(\Eps^{1/3})$. The term $A$ is strictly positive and dominates $B$. The main driving force in the dynamics for $\theta$ is $\eta$.

\item Interval II: The second interval is a preparation for escaping the bottleneck. More specifically, at the end of the second interval, $\theta$ will reach the state where $\theta - \thetas = \BigO(\Eps^{1/6})$. During this interval $A + B \geq 0$. 
The main driving force for $\theta$ is $\frac{1}{r} H\sim (\theta-\thetas)^2$.

\item Interval III: At the end of the third interval, $\theta$ will escape from the bottleneck, because the initial state prepared by Interval II is large enough. Hence, at the end of Interval III we will have $\theta - \thetas = \BigO(1)$. 

\item Interval IV: In this last time interval, the main driving force for $\theta$ is $\frac{1}{r} H\sim -(\theta-\thetat)$. Consequently, $\theta$ is attracted to the stable equilibrium point $\thetat$.
\end{itemize}
\smallskip
The characteristics of the four intervals are represented schematically in Figure \ref{fig:intervals}. There we also add the orders of magnitudes of the intervals themselves. 

\begin{figure}[t]%
\centering
\begin{tikzpicture}[>= latex]
                \begin{scope}[scale=1]

					\draw[line width=1.25] (0,0) -- (13,0);
					
					\draw[line width=1.25] (0,-0.1) -- (0,0.1) node[above] {$0$};
					\draw[line width=1.25] (4,-0.1) -- (4,0.1) node[above] {$\tau_1$};
					\draw[line width=1.25] (8,-0.1) -- (8,0.1) node[above] {$\tau_2$};
					\draw[line width=1.25] (11,-0.1) -- (11,0.1) node[above] {$\tau_3$};
					\draw[line width=1.25] (13,-0.1) -- (13,0.1) node[above] {$\tau_4$};
				
					\begin{scope} 
					\tikzstyle{every node} = [draw,rectangle, fill=gray!5, line width = 0.5]
					\draw (2,1.5)  node[below] {$I$};
					\draw (6,1.5)  node[below] {$II$};
					\draw (9.5,1.5)  node[below] {$III$};
					\draw (12,1.5)  node[below] {$IV$};
					\end{scope}

            \draw[decorate, decoration={brace,mirror,amplitude=10pt}, xshift=0pt, yshift=-5pt](0.1,0) -- (3.9,0) node[pos=.5,sloped, below, yshift=-10pt] {$\Ord(\eps^{2/3})$};
            \draw[decorate, decoration={brace,mirror,amplitude=10pt}, xshift=0pt, yshift=-5pt](4.1,0) -- (7.9,0) node[pos=.5,sloped, below, yshift=-10pt] {$\Ord(\eps^{2/3})-\Ord(\eps^{5/6})$};		
			 \draw[decorate, decoration={brace,mirror,amplitude=10pt}, xshift=0pt, yshift=-5pt](8.1,0) -- (10.9,0) node[pos=.5,sloped, below, yshift=-10pt] {$\Ord(\eps^{5/6})-\Ord(\eps)$};	
			\draw[decorate, decoration={brace,mirror,amplitude=10pt}, xshift=0pt, yshift=-5pt](11.1,0) -- (12.9,0) node[pos=.5,sloped, below, yshift=-10pt] {$\Ord(\eps^{1-\lambda})$};

\draw[->,line width=1, color=gray!70] (0,-0.2) -- (0,-2) node[below,color=black] {$\theta-\thetas=\Ord(\eps)$};
\draw[->,line width=1, color=gray!70] (4,-0.2) -- (4,-2) node[below,color=black] {$\theta-\thetas=\Ord(\eps^{1/3})$};
\draw[->,line width=1, color=gray!70] (8,-0.2) -- (8,-2) node[below,color=black] {$\theta-\thetas=\Ord(\eps^{1/6})$};
\draw[->,line width=1, color=gray!70] (11,-0.2) -- (11,-3) node[below,color=black] {\begin{minipage}{0.25\textwidth}\begin{center}
$\theta-\thetas=\Ord(1)$\\
$\theta-\thetat=\Ord(1)$
\end{center}
\end{minipage}};
\draw[->,line width=1, color=gray!70] (13,-0.2) -- (13,-2) node[below,color=black] {$\theta-\thetat=\Ord(\eps^{2/3})$};
\end{scope}
\end{tikzpicture}
\caption{Orders of magnitude of the four time intervals that constitute the transition layer identified in Theorem \ref{thm: conv quad lin 2D R neg}. The total length of the four intervals is $\Ord(\eps^{2/3})$. Note that in Interval IV, the parameter $\lambda>0$ is arbitrarily small. The evolution of $\theta$ is also indicated. It moves from an $\eps$-neighbourhood of $\thetas$ at time $t=0$ to an $\eps^{2/3}$-neighbourhood of $\thetat$ at time $t=\tEndIV$.} %
\label{fig:intervals}%
\end{figure}
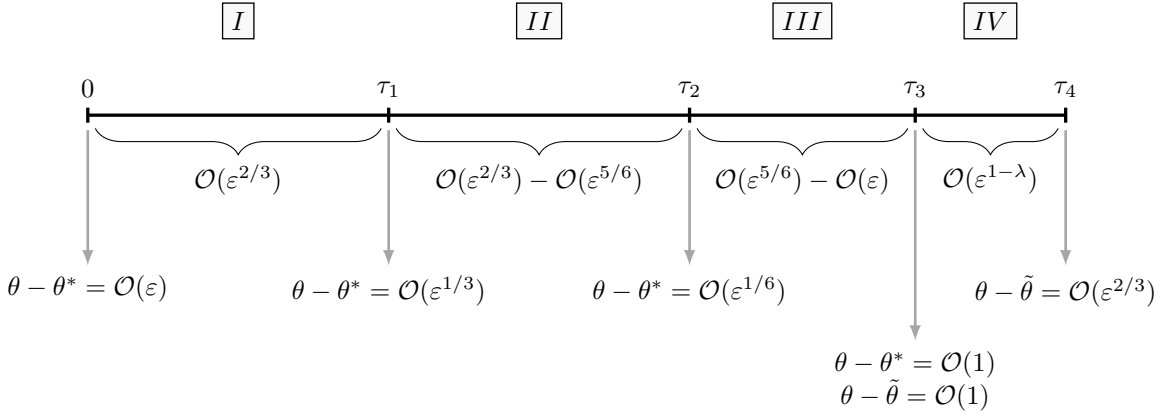

To account for possibly different characteristics of the profile $R(\xs,\cdot)$ between $\thetas$ and $\thetat$, we distinguish between the following two cases:
\begin{enumerate} 
\item[\RP] $R(\bxs,\phi)>0$ for all $\phi\in[\thetas,\thetat]$. The scenario in Figure \ref{fig:HR} falls into this category; the grey line is strictly positive. In this case, define 
\begin{align} \label{def:R-0-RP}
   R_0:=\dfrac12\,\min_{\phi\in[\thetas,\thetat]}R(\bxs,\phi)>0 \,.
\end{align}
\item[\RN] There exists $\phi \in [\thetas, \thetat]$ such that $R(\bxs,\phi)=0$. In this case define
\begin{align}
\nn   \thetatwo := \inf\{\phi\in[\thetas,\thetat]& :\, R(\bxs,\phi)=0\},
\qquad
   \thetathree := \sup\{\phi\in[\thetas,\thetat]:\, R(\bxs,\phi)=0\} \,,
\\
  &\thetatwos=(\thetas+\thetatwo)/2 \,,
\qquad
  \thetathrees=(\thetathree+\thetat)/2 \,. \label{eqn:def theta2s theta3s}
\end{align}
If $\thetatwo\neq\thetathree$, then $R(\xs,\cdot)$ may become negative in the intermediate interval and may even have multiple subsequent positive and negative intervals. The first zero of $R(\xs,\cdot)$ is at $\thetatwo$, the last zero is at $\thetathree$. Also note that $\thetatwos$ and $\thetathrees$ do not depend on $\eps$; these are quantities that are $\Ord(1)$ away from the zeroes of $R(\xs,\cdot)$ -- see Figure \ref{fig:R neg} for an illustration. 
We also define\footnote{Since cases {\RP } and {\RN } are mutually disjoint, using the same notation $R_0$ in \eqref{def:R-0-RP} and \eqref{def:R-0-RN} is consistent, and in fact very convenient for the presentation.
} 
\begin{equation}  \label{def:R-0-RN}
R_0 := \dfrac12\,\min_{\phi \in [\thetas,\thetatwos]\cup[\thetathrees,\thetat]} R(\bxs,\phi)>0.
\end{equation}
\end{enumerate}

\begin{figure}[t]%
\centering
\begin{tikzpicture}[>= latex]
\begin{scope}[scale=1]
					
\draw[line width=0.75] (-4.5,0) -- (6,0) ;
\draw[line width=0.75] (-3,0.1) -- (-3,-0.1) node[below] {$\thetas$};
\draw[line width=0.75] (-1,-0.1) -- (-1,0.1) node[above] {$\thetatwo$};
\draw[line width=0.75] (2,-0.1) -- (2,0.1) node[above] {$\thetathree$};
\draw[line width=0.75] (5,0.1) -- (5,-0.1)  node[below] {$\thetat$};
\draw[line width=0.75] (-2,0.1) -- (-2,-0.1)  node[below] {$\thetatwos$};
\draw[line width=0.75] (3.5,0.1) -- (3.5,-0.1)  node[below] {$\thetathrees$};

\draw[line width=1.25] (-4,2.5) to [out=0, in=135] (-1,0) to [out=-45, in=-115] (0,0) 
to [out=65,  in=120] %
(1.2,0) to [out=-60, in=210] (2,0) to [out=30, in=190] (5.5,1.9);

\draw (-4,2.5)  node[left] {$R(\mathbf{x}^*,\,\cdot\,)$};
\end{scope}
\end{tikzpicture}
\caption{Schematic illustration of $R(\bxs,\cdot)$ in the case \RN. The function $R(\bxs,\cdot)$ is strictly positive at $\thetas$ and $\thetat$. The first and last zero of $R(\bxs,\cdot)$ within the interval $[\thetas,\thetat]$ are denoted by $\thetatwo$ and $\thetathree$, respectively. In general, $\thetatwo=\thetathree$ may happen, when $R(\bxs,\cdot)$ only touches the horizontal axis. If $\thetatwo\neq\thetathree$, then $R(\bxs,\cdot)$ could be negative everywhere in $(\thetatwo,\thetathree)$, or have additional zeros. The picture shows the case in which $R(\bxs,\cdot)$ has multiple zeroes between $\thetatwo$ and $\thetathree$. The point $\thetatwos$ is halfway between $\thetas$ and $\thetatwo$, and likewise the point $\thetathrees$ is halfway between $\thetathree$ and $\thetat$.
} %
\label{fig:R neg}%
\end{figure}

Our main theorem states
\begin{theorem}\label{thm: conv quad lin 2D R neg}
Assume that $H, R \in C^2(\R^2\times\R;\R)$ and $H$ satisfies Assumption~\ref{ass:H 2d quad lin}. Assume moreover that 
\begin{align} \label{def:As-rs}
   \As = \nabla_\bx\F(\bxs,\thetas,\rs)\cdot \bvs>0 \,, 
\qquad 
   \rs=R(\bxs,\thetas)>0 \,,
\qquad
   R(\bxs,\thetat)>0 \,.
\end{align}
Let $(\eta, \theta, r)$ be the solution to~\eqref{eqn:syst eta 2D general} whose initial data satisfy~\eqref{cond:initial}. 
Let there be a constant $\bar{R}\geqs 0$ large enough such that 
\begin{align} \label{bound:bar-R}
   0 < r_0 \leq \bar R \,,
\quad \text{ and } \quad
   |R(\mathbf{z},\phi)| \leqs \bar{R} 
\quad
  \text{for all $(\mathbf{z},\phi)\in\R^2\times\R$} \,.
\end{align} 
Then, for all $\eps>0$ sufficiently small, there exist three positive constants $\bar{C}_1, \bar{C}_2$ and $c$ (independent of $\eps$) and a time ${\bar C}_1 \Eps^{2/3} \leqs \tau\leqs \bar{C}_2\,\eps^{2/3}$ such that 
\begin{align*}
  |\theta(\tau)-\thetat|\leqs c\,\eps^{2/3} \,.
\end{align*}
\end{theorem}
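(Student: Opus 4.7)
The plan is to implement the four-stage decomposition $[0,\tEndI]\cup[\tEndI,\tEndII]\cup[\tEndII,\tEndIII]\cup[\tEndIII,\tEndIV]$ shown in Figure~\ref{fig:intervals}. On each sub-interval I would run a bootstrap: postulate an a priori size for $\theta-\thetas$ and $r-\rs$, use this in the differential inequalities derived from~\eqref{eqn:syst eta 2D general} to get improved bounds, and close the loop by choosing the endpoint constant. In parallel, the $r$-equation $\eps\,\dr/\dt=-r+R(\bx,\theta)$ must be controlled to keep $r$ uniformly bounded below away from zero, so that the singular factor $1/r$ in $\F$ never breaks the $\theta$-analysis.

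For Interval~I, continuity of $A(t)=\nabla_\bx\F(\bx,\theta,r)\cdot\bv$ with $A(0)=\As>0$, $B(0)=C(0)=0$, together with the $\Ord(\eps)$-closeness of the initial data in~\eqref{cond:initial}, gives $A(t)\geq\As/2$ on a short time window. Under the bootstrap $|\theta-\thetas|\lesssim\eps^{1/3}$ and $|r-\rs|\lesssim\eps$, the correction terms $B$, $C$ are absorbed into $A$, the $\eta$-equation yields $\eta(t)\geq\As t/4$, and the $\theta$-equation combined with $H(\bxs,\theta)\geq 0$ on $[\thetas,\thetat]$ gives $\eps\,\dtheta/\dt\geq\As t/8$. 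Integrating twice and choosing $\tEndI=\bar c\,\eps^{2/3}$ closes the bootstrap with $\theta(\tEndI)-\thetas=\Ord(\eps^{1/3})$.

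For Intervals~II and~III, the factorization~\eqref{eqn:H poly theta} with $\theta<\thetat$ gives $H(\bxs,\theta)/r\geq c_1(\theta-\thetas)^2$, while $\eta\geq 0$ is preserved as long as $A+B$ has not yet changed sign (which one verifies from the bounds of the previous interval). The resulting differential inequality $\eps\,\dtheta/\dt\geq c_1(\theta-\thetas)^2$ integrates by separation to $(\theta(t)-\thetas)^{-1}\leq(\theta(\tEndI)-\thetas)^{-1}-c_1(t-\tEndI)/\eps$, so $\theta-\thetas$ reaches $\eps^{1/6}$ at $\tEndII-\tEndI\sim\eps^{2/3}$ and $\Ord(1)$ at $\tEndIII-\tEndII\sim\eps^{5/6}$; the splitting into II and III is a technical convenience that enforces strict dominance of the quadratic term in~III. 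Interval~IV is standard linear attraction: $\partial_\theta H(\bxs,\thetat)<0$ yields $H(\bxs,\theta)/r\leq -\kappa(\theta-\thetat)$ near $\thetat$, and exponential decay at rate $\kappa/\eps$ brings $|\theta-\thetat|$ to $\Ord(\eps^{2/3})$ in time $\Ord(\eps^{1-\lambda})$ for any $\lambda>0$. Summing the four sub-intervals, $\tEndIV\leq\bar C_2\,\eps^{2/3}$, which is the stated conclusion.

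The main obstacle is keeping $r$ uniformly positive, especially in case~\RN. In case~\RP, $R(\bxs,\cdot)\geq 2R_0>0$ on $[\thetas,\thetat]$ and $\bx$ moves only $\Ord(\eps^{2/3})$ over the total interval, so the Duhamel formula for the $r$-equation keeps $r\geq R_0$ after an $\Ord(\eps\log(1/\eps))$ initial layer. In case~\RN, where $R(\bxs,\cdot)$ may vanish or be negative on $(\thetatwo,\thetathree)$, the saving feature forced by the preceding ODE analysis is that $\theta$ stays below $\thetatwos$ throughout Intervals~I--II and most of~III, crossing the danger zone only in a terminal sub-interval of length comparable to the relaxation scale $\eps$. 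The exponential memory kernel in $r(t)=e^{-t/\eps}r_0+\eps^{-1}\int_0^t e^{-(t-s)/\eps}R(\bx(s),\theta(s))\ds$ therefore preserves the pre-crossing value $r\approx R(\bxs,\thetatwos)\geq 2R_0$ across the crossing, yielding $r\geq R_0/2$ uniformly and rendering the $1/r$ factor in $\F$ harmless, so all of the $\theta$-estimates above go through.
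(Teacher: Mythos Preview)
Your four-stage outline matches the paper's, and the treatment of Intervals~II--III and the linear attraction in Interval~IV is essentially correct. There are, however, two genuine gaps.

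\smallskip
\noindent\textbf{Interval I: the term $C$ cannot be absorbed.} Your bootstrap $|r-\rs|\lesssim\eps$ does not close: since $|\theta-\thetas|\lesssim\eps^{1/3}$, the forcing $R(\bx,\theta)-\rs$ in the $r$-equation is $\Ord(\eps^{1/3})$, which drives $|r-\rs|$ to $\Ord(\eps^{1/3})$, not $\Ord(\eps)$. Consequently $|\dr/\dt|=|-r+R|/\eps\sim\eps^{-2/3}$, so $|C|=|\eta|\cdot|\dr/\dt|/r$ is $\Ord(\eps^{2/3})\cdot\Ord(\eps^{-2/3})=\Ord(1)$, of the \emph{same} order as $A$. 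Hence the claim $\eta(t)\geq \As t/4$ is unjustified. The paper instead writes $C=-G(t)\,\eta$ with $|G|\leq D\eps^{-2/3}$ and treats~\eqref{eqn:syst eta 2D general} as a linear differential inequality $\deta/\dt\geq \As/4 - D\eps^{-2/3}\eta$, whose solution saturates at $\Ord(\eps^{2/3})$; feeding this into the $\theta$-equation still yields $\theta(\tEndI)-\thetas=\eps^{1/3}$ at $\tEndI\sim\eps^{2/3}$.

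\smallskip
\noindent\textbf{Case \RN: the Duhamel argument fails.} You correctly observe that $\theta$ crosses the zone where $R(\bxs,\cdot)$ can be negative in time $\Ord(\eps)$, but it is precisely because this equals the relaxation scale that the memory kernel does \emph{not} preserve the pre-crossing value. Duhamel gives $r(t)\geq e^{-C_5}R_0-(1-e^{-C_5})\hat R$ with $\hat R=\sup(-R)$, and there is no a priori relation between $C_5$ (fixed by $H$) and $R_0/\hat R$ that forces this to be positive. The paper's remedy is to change the independent variable from $t$ to $\theta$ (legitimate since $\dtheta/\dt>0$), which yields
\[
\frac{\dr}{\dtheta}=\frac{-r+R(\bx,\theta)}{\frac{1}{r}H(\bxs,\theta)+\eta}\;\geq\; -\frac{r(r+\hat R)}{D_{14}},\qquad D_{14}=\tfrac12\min_{[\theta(\tEndIII),\thetathrees]}H(\bxs,\cdot)>0.
\]
This Riccati-type inequality has $r=0$ as a barrier (the right-hand side vanishes there), and integrating it over the fixed $\theta$-interval $[\theta(\tEndIII),\thetathrees]$ gives the explicit positive lower bound $\hat R_0$ of~\eqref{eqn:lower bnd r int4A}. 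This structural observation---that the $1/r$ in $\dtheta/\dt$ speeds up the $\theta$-crossing precisely when $r$ gets small---is what your Duhamel estimate misses.
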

\begin{proof}
The proof is divided into four steps according to the four intervals described above. The common procedures within each interval are:
\begin{quote}
\hspace{-1cm}  \fbox{Estimate of $\abs{\bx - \bxs}, \abs{r - \rs}$, and the bounds for $r$}
$\Rightarrow$ \fbox{Estimate of $\eta$}
$\Rightarrow$ \fbox{Estimate of $\theta - \thetas$}. 
\end{quote}
We only distinguish between the cases {\RP} and {\RN} in the fourth interval. Hence, unless otherwise noted, the two cases are treated simultaneously, and notation $R_0$ can refer to either \eqref{def:R-0-RP} or \eqref{def:R-0-RN}. 
\medskip

\Ni \underline{\it{Step 1: Interval I (Formation of the bottleneck). }}
Compared with setting the initial data exactly at $(\xs, \thetas, \rs)$, the main extra difficulty that the initial perturbation introduces is that $\eta$ may not be positive near $t = 0$. Hence $\theta$ may not be increasing starting from $t=0$. We want to show that after an initial interval of order $\BigO(\Eps^{2/3})$, $\theta - \thetas$ becomes positive and of order $\BigO(\Eps^{1/3})$. 

First we introduce some constants before explicitly defining Interval I. 
Let 
\begin{align}
   \bar{B}:=\bar{B}(\bxs,1)\times\bar{B}(\thetas,1) \label{eqn:def ball B bar}
\end{align} 
and 
\begin{align} \label{eqn:def D8 2d}
   D:= \frac{3}{R_0} \vpran{\max_{\bar{B}}|\partial_\theta R| + 1} \,,
\qquad
   C_0:= \frac{8D}{\As} + \frac{1}{D} \,,
\end{align}
where $R_0$ is defined as in~\eqref{def:R-0-RP} or \eqref{def:R-0-RN} and $\As$ is given by~\eqref{def:As-rs}. Note that $D$ is strictly positive and hence, $C_0$ is well-defined. 
Let
\begin{equation} \label{eqn:def sigma1 2d}
    \sigma_1 
:= \sup\{ t \,:\,  \abs{\theta(s)-\thetas} \leqs \eps^{1/3} \,\, \text{for all $s \in [0, t]$}\}.
\end{equation}
Define the end-time $\tau_1$ of Interval I as 
\begin{align} \label{eqn:def tEndI 2d}
    \tEndI := \min\{ \sigma_1, C_0\,\eps^{2/3}\} \,.
\end{align}
Note that since initially we have $\abs{\theta_0 - \thetas} = \BigO(\Eps)$, the length of Interval I is strictly positive as long as $\Eps$ is small enough. 

Our main goal in Step 1 is to show that at the end-time $t = \tau_1$,
\begin{align} \label{goal:interval-1}
   \theta(\tau_1) - \thetas = \eps^{1/3} 
\qquad \text{and} \qquad
   r(t) > 0
\quad \text{for all} \quad
   t \in [0, \tEndI] \,.
\end{align}
We start by showing the following bounds on $[0, \tEndI]$ (recall upper bound $\bar R$ introduced in \eqref{bound:bar-R}):
\begin{align} \label{bound:interval-I-1}
   |\bx(t)-\bxs|
\leqs  D_1\,\eps^{2/3}, 
\qquad
     0 < R_0 \leq r(t) \leq \bar R \,,
\qquad
   |r(t)-\rs|
\leqs 
  D_2\,\eps^{1/3} \,, 
\end{align}
where $D_1, D_2$ are some constants independent of $\Eps$ that will be defined later.  

The upper bound for $r$ can be derived from the $r$-equation. Indeed, the ODE for $r$ in \eqref{eqn:syst eta 2D general} yields the inequality
\begin{equation*} \label{eqn:ODE r diff ineq Rbar}
    \eps\,\dfrac{{\rm d}|r|}{\dt} 
\leqs 
   -|r| + \bar{R} \,.
\end{equation*}
Hence, by the assumption for the initial data $r_0$ in~\eqref{bound:bar-R}, we have
\begin{align} \label{bound:r-all-time}
   \abs{r(t)}\leqs\bar{R} 
\end{align} 
for all $t$ such that the solution exists. 
Therefore on Interval I,
\begin{equation} \label{eqn:bnd x-xs eps int1 2d}
  |\bx(t)-\bxs|
\leqs 
  |\bx(t)-\bxo| + |\bxo-\bxs| \leqs \bar{R}\cdot C_0\,\eps^{2/3} 
  + a_1\,\eps \leqs D_1\,\eps^{2/3} \,,
\end{equation}
where the constant $D_1$ is chosen as $D_1 = 2 \bar R C_0$. By \eqref{eqn:def sigma1 2d} and \eqref{eqn:bnd x-xs eps int1 2d}, we know that $(\bx(t),\theta(t))\in\bar{B} $ for $t\in[0,\tEndI]$ and $\Eps$ sufficiently small; cf.~\eqref{eqn:def ball B bar}. Thus for $\Eps$ small enough,
\begin{equation} \label{bound:R-Rs-perturb}
   |R(\bx(t),\theta(t))-R(\bxs,\thetas)| 
\leqs 
   \max_{\bar{B}}|\nabla_\bx R| \cdot D_1 \,\eps^{2/3} 
   + \max_{\bar{B}}|\partial_\theta R| \cdot \, \eps^{1/3}
\leqs
   D_2 \Eps^{1/3} \,,
\end{equation}
where 
\begin{align} \label{def:D-2}
   D_2 = \max_{\bar{B}}|\partial_\theta R| + 1 \,.
\end{align}
Since $R(\bxs,\thetas)\geqs 2\,R_0$, the bound~\eqref{bound:R-Rs-perturb} implies that for $\eps$ sufficiently small, 
\begin{equation*}
   R(\bx(t),\theta(t))\geqs R_0 \,,
\qquad
   t\in[0,\tEndI] \,.
\end{equation*}
We also have $r_0 \geqs R_0$ since
\begin{align*} 
   |\ro-R(\bxs,\thetas)|=|\ro-\rs|\leqs a_3\,\eps \,.
\end{align*}
Therefore $R_0$  is a subsolution to the $r$-equation which gives
\begin{align*}
    r(t) \geqs R_0 \,,
\qquad
   t\in[0,\tEndI] \,.
\end{align*}

Now we estimate the size of $|r(t)-\rs|$. 
Observe that by~\eqref{eqn:syst eta 2D general} and \eqref{bound:R-Rs-perturb},
\begin{align} \label{bound:r-rs-Inter-I}
   \Eps \frac{\rm d }{\dt} \abs{r - \rs}
\leqs
   - \abs{r - \rs} + |R(\bx(t),\theta(t))-R(\bxs,\thetas)|
\leqs
   - \abs{r - \rs} + D_2 \Eps^{1/3}
\end{align}
with the initial data satisfying
\begin{align*}
   |\ro-\rs|
\leqs 
   a_3\,\eps
\leqs
   D_2 \Eps^{1/3}
\qquad
  \text{for $\Eps$ small enough} \,.
\end{align*}
Therefore $D_2\, \Eps^{1/3}$ is a supersolution to~\eqref{bound:r-rs-Inter-I}, which gives
\begin{equation*} \label{eqn:bnd int1 r-rs 2d}
   |r(t)-\rs| 
\leqs 
   D_2 \Eps^{1/3} \,,
\qquad
   t\in[0,\tEndI] \,.
\end{equation*}
We thus have verified all the bounds in~\eqref{bound:interval-I-1}. 
\smallskip

Next we study the evolution of $\eta$ on Interval I by estimating the three terms $A, B, C$ on the right-hand side of the $\eta$-equation. We will show that on Interval I the term $A$ is dominant. First by the continuity of $A$ in $\bx$, $\theta$ and $r$ together with~\eqref{bound:interval-I-1} and~\eqref{def:As-rs}, we have that 
\begin{align}  \label{bound:Interval-I-A}
    A(t)\geqs \As/2 > 0
\qquad
   \text{for all $t \in [0, \tau_1]$.}
\end{align}
Meanwhile, on Interval I, $H$ and $\eta$ satisfy
\begin{equation} \label{eqn:Taylor H}
\begin{aligned} 
   |H(\bx,\theta)| 
\leqs
  &\, \underbrace{|H(\bxs,\thetas)|}_{=0} 
      + |\nabla_\bx H(\bxs,\thetas)|\cdot |\bx-\bxs| 
      + \underbrace{|\partial_\theta H(\bxs,\thetas)|}_{=0}\cdot |\theta-\thetas|
\\
  &\, + \left(\max_{\alpha_1+\alpha_2+\alpha_3=2}\,
                 \max_{\bar{B}} |\partial^{\alpha_1}_x\partial^{\alpha_2}_y                              
                 \partial^{\alpha_3}_\theta H| \right)
            \cdot 
               \sum_{\alpha_1+\alpha_2+\alpha_3=2} (x-\xs)^{\alpha_1}(y- 
               \ys)^{\alpha_2}(\theta-\thetas)^{\alpha_3}
\end{aligned}
\end{equation}
and
\begin{equation} \label{bound:eta-I-1}
\begin{aligned}
  \abs{\eta(t)}
= \frac{1}{r(t)} 
       \abs{H(\bx(t),\theta(t)) - H(\bxs,\theta(t))}
\leq 
    \frac{1}{R_0} \max_{\bar B} \abs{\nabla_{\bx} H}
    \abs{\bx - \bxs} \,.
\end{aligned}
\end{equation}
Since $|\bx(t)-\bxs|\leqs D_1\,\eps^{2/3}$ and $|\theta(t)-\thetas|\leqs\eps^{1/3}$ on Interval I, it follows from~\eqref{eqn:Taylor H} and~\eqref{bound:eta-I-1} that 
\begin{equation} \label{eqn:Taylor H-1}
  \abs{\eta(t)} \leq D_3 \, \Eps^{2/3} \,,
\qquad
  |H(\bx(t),\theta(t))|\leqs D_4\,\eps^{2/3} \,,
\qquad
  t \in [0, \tau_1] \,,
\end{equation}
and
\begin{equation} \label{eqn:bnd dvH}
   |\partial_\theta H(\bx(t),\theta(t))-\partial_\theta H(\bxs,\theta(t))| \leqs  \max_{\bar{B}}|\nabla_\bx \partial_\theta H|\cdot |\bx(t)-\bxs| \leqs D_5\,\eps^{2/3}
\qquad
  t \in [0, \tau_1] \,,
\end{equation}
where $D_3, D_4, D_5$ are positive constants that are independent of $\Eps$. 
We can then estimate $|B(t)|$ using \eqref{bound:interval-I-1}, \eqref{eqn:Taylor H-1} and \eqref{eqn:bnd dvH} and obtain that
\begin{equation}
\begin{aligned} \label{eqn:bnd B int1 2d}
    |B(t)|
& \leqs
   \abs{\dfrac{1}{r(t)} \, \partial_\theta H(\bx(t),\theta(t))
           -\dfrac{1}{r(t)} \, \partial_\theta H(\bxs,\theta(t))}
  \cdot 
    \dfrac1\eps \, \abs{\dfrac{1}{r(t)} \, H(\bxs,\theta(t)) + \eta(t)}
\\
& \leqs 
    \dfrac{1}{\eps\, R_0^2}\, (D_5\,\eps^{2/3}) \, 
    \vpran{D_4 + R_0 D_3 } \eps^{2/3} 
  \leqs 
    \dfrac{D_5 \vpran{D_4 + R_0 D_3 }}{R_0^2} \eps^{1/3} \,.
\end{aligned}
\end{equation}
Hence, for sufficiently small $\eps$, it holds that $|B(t)|\leqs \As/4$. Together with~\eqref{bound:Interval-I-A} we have 
\begin{align} \label{bound:interval-I-A-B}
    A(t)+B(t)\geqs \As/4 \,,
\qquad
   t \in [0, \tau_1] \,.
\end{align}
Regarding the term $C$, we note that
\begin{align*} 
  C =\bigg(\partial_r\F(\bx,\theta,r)-\partial_r\F(\bxs,\theta,r)\bigg) \dfrac{\dr}{\dt}
    = \bigg(-\dfrac{1}{r^2}\,H(\bx,\theta) + \dfrac{1}{r^2}\, H(\bxs,\theta)\bigg)\dfrac{\dr}{\dt}
     = -\dfrac1r \, \eta \, \dfrac{\dr}{\dt} \,.
\end{align*}
Denote 
\begin{equation} \label{eqn:def G}
G(t):= \dfrac{1}{r(t)}\, \dfrac{\dr}{\dt} \,.
\end{equation}
Then for every $t \in [0, \tau_1]$, by~\eqref{bound:interval-I-1}, \eqref{def:D-2} and the definition of Interval I we have
\begin{align} \label{eqn:upper bnd G 2d}
  |G(t)| 
= &\, \left|\dfrac{1}{r(t)}\, \dfrac{\dr}{\dt} \right| 
= \dfrac1\eps \, \dfrac{1}{r(t)}\cdot \big|-r(t)+R(\bx(t),\theta(t))   
    + \underbrace{\rs - R(\bxs,\thetas)}_{=0}\big|  \nn
\\
\nn \leqs&\, \dfrac{1}{\eps\, R_0}\, \left( |r(t)-\rs| + \max_{\bar{B}}|\nabla_\bx R|\cdot |\bx(t)-\bxs| + \max_{\bar{B}}|\partial_\theta R|\cdot |\theta(t)-\thetas| \right) 
\\
\leq & \, 
  \dfrac{1}{\eps\, R_0}
  \vpran{D_2 \,\Eps^{1/3} + \max_{\bar{B}}|\nabla_\bx R| \cdot D_1\, \Eps^{2/3}
             + D_2\, \Eps^{1/3}}
\leq 
   \frac{3D_2}{R_0} \Eps^{-2/3} \,,
\qquad
  t \in [0, \tau_1] \,.
\end{align}
Combining~\eqref{bound:interval-I-A-B} with~\eqref{eqn:upper bnd G 2d}, we thus have on Interval I,
\begin{align} \label{eqn:diff ineq eta with G int1 2d}
    \dfrac{\deta}{\dt}
\geqs \dfrac14 \As - G\cdot \eta
\geqs \dfrac14 \As - \frac{3D_2}{R_0}\eps^{-2/3} \eta \,.
\end{align}
Meanwhile, the initial data $\eta(0)$ satisfies
\begin{equation}
   |\eta(0)| 
= \left| \dfrac{1}{\ro}\,H(\bxo,\thetao) - \dfrac{1}{\ro}\,H(\bxs,\thetao) \right| 
\leqs 
   \dfrac{1}{\ro}\,\max_{\bx \in \bar{B}(\bxs,1)}|\nabla_\bx H(\bx,\thetao)| \cdot \underbrace{|\bxo-\bxs|}_{\leqs a_1\,\eps} 
\leqs 
    D_6 \,\eps,\label{eqn:bnd eta0 int1 2d gen IC}
\end{equation}
where $D_6 = \frac{a_1}{R_0} \max_{\bar{B}}|\nabla_\bx H|$.  

Recall the definition of $D$ in~\eqref{eqn:def D8 2d} and solve~\eqref{eqn:diff ineq eta with G int1 2d} together with~\eqref{eqn:bnd eta0 int1 2d gen IC}. This yields that
\begin{equation} \label{eqn:lower bnd eta 2d int1 gen IC}
    \eta(t) 
\geqs 
   -D_6\,\eps + \dfrac{\As}{4\,L(\eps)}\,\bigg(1-\exp\big(-L(\eps)\,t\big)\bigg) \,,
\qquad
   L(\Eps) = D \, \Eps^{-2/3} \,.
\end{equation}
%
%
%
%
Note that, by Assumption~\ref{ass:H 2d quad lin}, we have that $H(\bxs,\theta(t))\geqs0$ on Interval I, for $\eps$ sufficiently small. 
Hence
\begin{equation} \label{eqn:diff ineq theta int1 2d}
   \eps\,\dfrac{\dtheta}{\dt} 
   = \eta + \dfrac1r \, H(\bxs,\theta) \geqs \eta \,.
\end{equation}
thus it follows from \eqref{eqn:lower bnd eta 2d int1 gen IC} that
\begin{equation} \label{eqn:lower bnd theta-thetas gen IC}
     \theta(t)-\thetas 
\geqs 
    \underbrace{\thetao-\thetas}_{\geqs-a_2\,\eps} 
    + \left(\dfrac{\As}{4\,\eps\,L(\eps)} - D_6 \right) t 
    + \dfrac{\As}{4\,\eps\,(L(\eps))^2} \bigg(\exp(-L(\eps)\,t)-1\bigg).
\end{equation}
At the end of Interval I, either of the following two cases is true:
\begin{description}
\item[Case I-1] the end of Interval I is reached when $\theta(\tEndI)-\thetas=\eps^{1/3}$. By \eqref{eqn:def sigma1 2d}, we know that the final time must satisfy $\tEndI\leqs C_0\,\eps^{2/3}$.
\item[Case I-2] the end of Interval I is reached when $\theta(\tEndI)-\thetas=-\eps^{1/3}$. Substituting the inequality $\exp(-z)\geqs1-z$ in \eqref{eqn:lower bnd theta-thetas gen IC}, we then find that
\begin{equation*}
-\eps^{1/3} = \theta(\tEndI)-\thetas \geqs -a_2\,\eps - D_6\,\tEndI
\end{equation*}
must hold in this case. This implies, for $\eps$ sufficiently small, that
\begin{equation*} \label{eqn:lower bnd t int 1 2d gen IC}
    \tEndI
\geqs 
   \dfrac{1}{2 D_6}\,\eps^{1/3},
\end{equation*}
which contradicts the assumption that $\tEndI\leqs C_0\,\eps^{2/3}$. Therefore at the end of Interval I it can \emph{not} happen that $\theta(\tEndI)-\thetas=-\eps^{1/3}$.
\item[Case I-3] the end of Interval I is reached when $\tEndI=C_0\,\eps^{2/3}$. 
Note that, by the definition of $L(\Eps)$ in~\eqref{eqn:lower bnd eta 2d int1 gen IC}, we have
\begin{equation*}
   \dfrac{\As}{4\,\eps\,L(\eps)} =  \dfrac{\As}{4 \, D\,\eps^{1/3}}  \,,    
\qquad
   \dfrac{\As}{4\,\eps\,(L(\eps))^2} =  \dfrac{\As\,\eps^{1/3}}{4\, D^2},
\end{equation*}
Therefore
\begin{align*}
   \dfrac{\As}{4\,\eps\,L(\eps)} - D_6 > 0
\qquad
   \text{for $\Eps$ small enough.}
\end{align*}
Hence by~\eqref{eqn:lower bnd theta-thetas gen IC} we have
\begin{equation} \label{eqn:lower bnd theta-thetas case 2 int 1 2d gen ic}
    \theta(\tEndI)-\thetas 
\geqs 
    -a_2\,\eps 
    + \left(\dfrac{\As}{4\,\eps\,L(\eps)}-D_6 \right)\cdot C_0\,\eps^{2/3} 
    - \dfrac{\As}{4\,\eps\,(L(\eps))^2}.
\end{equation}
By the definitions of $C_0$ and $D$ in~\eqref{eqn:def D8 2d}, we have
\begin{align*}
   \dfrac{\As\,C_0}{4\, D} - \dfrac{\As}{4\, D^2}
 = 2 \,.
\end{align*} 
Hence, \eqref{eqn:lower bnd theta-thetas case 2 int 1 2d gen ic} implies that
\begin{align*}
   \theta(\tEndI)-\thetas 
\geqs&\, 
   \left(\dfrac{\As\,C_0}{4\, D} - \dfrac{\As}{4\, D^2}\right)\cdot \eps^{1/3} -a_2\,\eps - D_6  C_0\,\eps^{2/3}
\\
=&\, 2\,\eps^{1/3} -a_2\,\eps - D_6 C_0\,\eps^{2/3}
\geqs \eps^{1/3}.
\end{align*}
The latter inequality holds for $\eps$ sufficiently small. Rather than just $\theta(\tEndI)-\thetas\geqs\eps^{1/3}$, actually the equality $\theta(\tEndI)-\thetas=\eps^{1/3}$ must hold, since otherwise $\tEndI$ would have been reached before $t=C_0\,\eps^{2/3}$.
\end{description}
In conclusion, at the end of Interval I we have that
\begin{equation*}
\tEndI \leqs C_0\,\eps^{2/3} \hspace{0.06\linewidth}\text{ and} \hspace{0.06\linewidth} \theta(\tEndI)-\thetas = \eps^{1/3}.
\end{equation*}
We also have estimates for $\eta$ at the end-time $\tau_1$. First, by~\eqref{eqn:Taylor H-1}, we have
\begin{align*}
   \frac{\dtheta}{\dt} 
\leq
   \frac{1}{\Eps} \abs{\frac{1}{r} H + \eta}
\leq
   \frac{1}{\Eps R_0} \vpran{D_4 + R_0 D_3} \Eps^{2/3}
\leq 
   \frac{1}{R_0} \vpran{D_4 + R_0 D_3} \Eps^{-1/3} \,.
\end{align*}
Integrating from $0$ to $\tau_1$ gives
\begin{align*}
   \Eps^{1/3} 
  = \theta(\tau_1) - \theta^\ast
 \leq 
    \theta_0 - \theta^\ast
    + \frac{1}{R_0} \vpran{D_4 + R_0 D_3} \Eps^{-1/3} \tau_1
 \leq
    a_2 \Eps
    + \frac{1}{R_0} \vpran{D_4 + R_0 D_3} \Eps^{-1/3} \tau_1 \,.
\end{align*}
Therefore for $\Eps$ small enough, we derive that 
\begin{align}\label{eqn:lower bnd tau1}
   \tau_1 
\geq 
   \frac{R_0}{2 \vpran{D_4 + R_0 D_3}} \Eps^{2/3} \,.
\end{align}
It can be shown that the constant in this lower bound is smaller than $C_0$ (details are left to the reader), hence there is no inconsistency with the upper bound $\tEndI \leqs C_0\,\eps^{2/3}$. Substitution in~\eqref{eqn:lower bnd eta 2d int1 gen IC} of the lower bound of $\tau_1$, gives
\begin{align} \label{bound:eta-tau-1}
    \eta(\tau_1) 
\geqs 
   -D_6 \, \eps 
   + \dfrac{\As}{4\,D}\, 
       \vpran{1-\exp\vpran{-\frac{D\,R_0}{2 \vpran{D_4 + R_0 D_3}}}} \Eps^{2/3} 
> 0 
\qquad
   \text{for $\Eps$ small enough.}       
\end{align}

\smallskip

\Ni \underline{\it{Step 2: Interval II (Preparation for escaping the bottleneck). }}
Within this interval, we show that $\theta$ moves further away from $\thetas$. This will provide a large enough initial data for Interval III, during which $\theta$ attains an order $\BigO(1)$ distance from $\thetas$, thus escaping the bottleneck.
The main difference of the analysis between Interval I and II is that now we will use $\frac{1}{r} H$ as the main driving force for the evolution of $\theta$ as opposed to $\eta$ used in Interval I. 

We define the end-time of Interval II as follows. Let $\alpha\geqs 0$ be a `sufficiently small' parameter (details follow later) that is independent of $\eps$, and let
\begin{equation*} \label{eqn:def sigma2 2d}
\sigma_2:= \sup\{ t\geqs \tEndI \,:\, \eps^{1/3}\leqs\theta(s)-\thetas \leqs \alpha\, \eps^{1/6} \,\, \text{for all $s \in [\tau_1, t]$}\} \,.
\end{equation*}
The end-time for Interval II is defined as
\begin{equation} \label{eqn:def tEndII 2d}
\tEndII := \min\{ \sigma_2, \tEndI + C_1\,\eps^{2/3}-C_2\,\eps^{5/6}\} \,,
\end{equation}
where the constants are 
\begin{equation} \label{def:C-1-2-Inter-II}
   C_1:=\dfrac{2\bar{R}}{\left(\min_{\phi\in[\thetas,\thetat]}h(\phi)\right)\cdot (\thetat-\thetas)} \,,
\qquad
   C_2:=\dfrac{C_1}{\alpha}.
\end{equation}
The function $h$ is as given in \eqref{eqn:H poly theta}, and $\bar{R}$ is as in \eqref{bound:bar-R}.  We have from \eqref{bound:eta-tau-1} that $\eta(\tEndI)>0$ and, consequently, $\dtheta/\dt>0$ at time $t=\tEndI$ due to \eqref{eqn:diff ineq theta int1 2d}. Moreover, $\theta(\tEndI)-\thetas=\eps^{1/3}$. Therefore it must hold that $\sigma_2>\tEndI$ and the length of the interval $[\tEndI,\tEndII]$ (i.e.~Interval II) is strictly positive, provided that $\eps$ is sufficiently small.

Our main goal for Interval II is to show that
\begin{align} \label{goal:Interval-II}
    \theta(\tau_2) - \theta^\ast = \alpha\, \Eps^{1/6}
\qquad \text{ and } \qquad
   r(t) > 0 
\quad \text{for all } \quad
   t \in [\tau_1, \tau_2] \,.
\end{align}

The first estimate on Interval II is similar to that in Interval I. In particular, we want to  show that within Interval II:
\begin{align} \label{bound:interval-II-1}
   |\bx(t)-\bxs|
\leqs  D_7\,\eps^{2/3}, 
\qquad
     0 < R_0 \leq r(t) \leq \bar R \,,
\qquad
   |r(t)-\rs|
\leqs 
  D_{8}\,\eps^{1/6} \,, 
\end{align}
where $R_0, \bar R$ are defined in~\eqref{def:R-0-RP} or \eqref{def:R-0-RN} and~\eqref{bound:bar-R} and $D_7, D_{8}$ are some constants independent of $\Eps$. The derivation of the bounds for $\bx(t)$ and $r(t)$ in~\eqref{bound:interval-II-1} is similar to Interval I. In particular, by the bound of $r$ in~\eqref{bound:r-all-time}, given that the combined lengths of the intervals I and II is smaller than $(C_0+C_1) \eps^{2/3}$, we have
\begin{equation}
\label{eqn:bnd x-xs eps int2 2d}
    |\bx(t)-\bxs|\leqs \bar{R} (C_0+C_1)\,\eps^{2/3} 
:= D_7\,\eps^{2/3}.
\end{equation}
%
%
%
Analogously to \eqref{bound:R-Rs-perturb}, we find that 
\begin{align*}
   |R(\bx(t),\theta(t))-\rs|\leqs D_{8}\,\eps^{1/6} \,, 
\qquad
   t \in [\tau_1, \tau_2] \,,
\end{align*} 
where $D_{8}:= D_7\cdot\max_{\bar{B}}|\nabla_\bx R| + \alpha\,\max_{\bar{B}}|\partial_\theta R|$. Subsequently, an argument like \eqref{bound:r-rs-Inter-I}
yields that 
\begin{align*}
   |r(t)-\rs|\leqs D_{8}\,\eps^{1/6} \,.
\end{align*} 
Therefore all the bounds in~\eqref{bound:interval-II-1} hold. These bounds imply, like in~\eqref{bound:Interval-I-A}, that for $\Eps$ small enough, 
\begin{align*} \label{bound:A-Inter-II}
     A(t) \geqs \As/2 \,,
\qquad
     t \in [\tau_1, \tau_2] \,,
\end{align*}

Now we show that $\eta$ remains non-negative on Interval II. Using a Taylor expansion comparable to \eqref{eqn:Taylor H}, we find that 
\begin{equation}
|H(\bx,\theta)|\leqs D_{9}\,\eps^{2/3}    +   D_{10}\,\alpha^2\,\eps^{1/3},\label{eqn:bnd H int2 2d}
\end{equation}
where $D_{10}:= \max_{\bar{B}}|\partial^2_\theta H|$, and $D_{9}$ depends on a number of first- and second-order partial derivatives of $H$. Note that if we restrict ourselves to $\eps\leqs 1$ and $\alpha\leqs1$, then $D_{9}$ can be taken such that it is independent of $\alpha$ and $\eps$.
Moreover, a similar argument as in~\eqref{eqn:bnd dvH} gives
\begin{equation}
|\partial_\theta H(\bx,\theta)-\partial_\theta H(\bxs,\theta)| \leqs  \underbrace{D_7\cdot\max_{\bar{B}}|\nabla_\bx \partial_\theta H|}_{=: D_{11}}\cdot\, \eps^{2/3},\label{eqn:bnd dvH int2 2d}
\end{equation}
Due to \eqref{eqn:bnd H int2 2d} and \eqref{eqn:bnd dvH int2 2d}, we have the following analogue of \eqref{eqn:bnd B int1 2d}:
\begin{align*}
  |B(t)|
&\leqs 
   \dfrac{1}{\eps\,R_0^2}\cdot D_{11}\,\eps^{2/3}\cdot 
   (D_{9}\,\eps^{2/3}+D_{10}\,\alpha^2\,\eps^{1/3}
     +  D_7\,\max_{\bar B} \abs{\nabla_{\bx} H} \eps^{2/3})
\\
& = \frac{D_{11} D_9}{R_0^2} \Eps^{1/3}
      + \frac{D_{11} D_{10}}{R_0^2} \alpha^2
      + \frac{D_{11} D_7 \max_{\bar B} \abs{\nabla_{\bx} H}}{R_0^2} \Eps^{1/3} \,,
\end{align*}
where we also used \eqref{bound:eta-I-1} and  \eqref{eqn:bnd x-xs eps int2 2d} to bound $|\eta|$. Consequently, $|B(t)|\leqs \As/2$ on Interval II, if $\eps$ and $\alpha$ are sufficiently small. The choice of $\alpha$ is 
in particular independent of $\Eps$. Such choice of $\alpha$ gives
\begin{align} \label{bound:A-B-Inter-II}
    A(t)+B(t) \geqs 0 \,, 
\qquad
    t \in [\tau_1, \tau_2] \,.
\end{align}
The lower bound~\eqref{bound:A-B-Inter-II} is not sharp but nevertheless sufficient for the analysis in the sequel.  By~\eqref{bound:A-B-Inter-II}, the function $\eta$ satisfies the inequality
\begin{equation*} 
    \dfrac{\deta}{\dt}
\geqs C(t) 
= -G(t)\cdot \eta \,,
\end{equation*}
where $C(t), G(t)$ are defined in~\eqref{eqn:syst eta 2D general} and \eqref{eqn:def G}. The initial data $\eta(\tau_1)$ is positive due to  \eqref{bound:eta-tau-1}. Therefore,
\begin{equation*}
    \eta(t)
\geqs 
    \eta(\tEndI)\cdot \exp(-\int_{\tEndI}^t G(s)\,ds)
> 0 \,,
\qquad
    t \in [\tau_1, \tau_2] \,.
\end{equation*}

Using the positivity of $\eta$ on Interval II, we can now show the first equality in~\eqref{goal:Interval-II}. Since $\theta$ is arbitrarily close to $\thetas$ for $\eps$ sufficiently small, we have that $H(\bxs,\theta)\geqs0$ on Interval II. Hence,
\begin{equation*}
\eps\,\dfrac{\dtheta}{\dt} = \underbrace{\eta}_{>0} + \dfrac 1r \underbrace{H(\bxs,\theta)}_{\geqs0} > 0,
\end{equation*}
and thus $\theta$ is increasing in Interval II. 
Moreover, for $\Eps$ sufficiently small, we have
\begin{align*}
   \thetat-\theta(t)\geqs (\thetat-\thetas)/2 \,,
\qquad
   t \in [\tau_1, \tau_2] \,.
\end{align*} 
Hence, since $\eta$ is positive and by the definition of $C_1$ in~\eqref{def:C-1-2-Inter-II}, the $\theta$-equation satisfies
\begin{equation} \label{eqn:diff ineq theta int2 2d}
    \eps\,\dfrac{\dtheta}{\dt} 
\geqs 
    \dfrac{1}{\bar{R}}\,\bigg( \underbrace{\min_{\phi\in[\thetas,\thetat]}h(\phi)}_{>0} \bigg)\cdot \dfrac12(\thetat-\thetas) \cdot (\theta(t)-\thetas)^2
= \frac{1}{C_1} (\theta(t)-\thetas)^2 \,.
\end{equation}
This yields the differential inequality
\begin{equation} \label{eqn:diff ineq theta - thetas int2 2d}
     \dfrac{\rm d}{\dt}(\theta-\thetas)
\geqs 
     \dfrac{1}{C_1\,\eps}\,(\theta-\thetas)^2  \,,
\qquad
    \theta(\tEndI)-\thetas=\eps^{1/3} \,.
\end{equation}
Solving~\eqref{eqn:diff ineq theta - thetas int2 2d} then gives
\begin{equation} \label{eqn:lower bnd theta-thetas int2 2d}
      \theta(t)-\thetas 
\geqs 
    \dfrac{\eps}{\eps^{2/3}-  \frac{1}{C_1}\,(t-\tEndI)} \,,
\qquad
   t\in[\tEndI,\tEndII] \,.
\end{equation}
Since $\theta(\cdot)$ is nondecreasing, it cannot leave the interval $[\thetas+\eps^{1/3},\thetas+\alpha\,\eps^{1/6}]$ at its left-hand boundary. At the end of Interval II, one of the following is therefore satisfied:
\begin{description}
\item[Case II-1] $\theta(\tEndII)-\thetas=\alpha\,\eps^{1/6}$. By \eqref{eqn:def tEndII 2d}, we know that the final time must satisfy $\tEndII-\tEndI\leqs C_1\,\eps^{2/3}-C_2\,\eps^{5/6}$.
\item[Case II-2] $\tEndII-\tEndI=C_1\,\eps^{2/3}-C_2\,\eps^{5/6}$. It follows from \eqref{eqn:lower bnd theta-thetas int2 2d} that, 
\begin{equation*}
\theta(\tEndII)-\thetas \geqs \alpha\,\eps^{1/6},
\end{equation*}
due to our choice of $C_1$ and $C_2$. In fact, $\theta(\tEndII)-\thetas$ must be equal to $\alpha\,\eps^{1/6}$, since otherwise $\tEndII$ would have been reached before $t=\tEndI+C_1\,\eps^{2/3}-C_2\,\eps^{5/6}$.
\end{description}
Together with the bounds of $r$ in~\eqref{bound:interval-II-1}, we have shown that~\eqref{goal:Interval-II} holds at the end of Interval II.
\medskip

\Ni \underline{\it{Step 3: Interval III (Escape from the bottleneck).}} In this part we show that within a time interval of length $\BigO\vpran{\Eps^{5/6}}$ after Interval II, $\theta$ will escape from the bottleneck and become order $\BigO(1)$ away from $\thetas$. 

We characterize the distance from $\theta$ to $\thetas$ by a parameter $\beta \in (0, 1)$. More specifically, if {\RN } holds, then let the parameter $0<\beta<1$ be sufficiently small, such that 
\begin{align*}
    \thetas+\beta\,(\thetat-\thetas)\leqs\thetatwos \,.
\end{align*} 
Recall that $\thetatwos$ was defined in \eqref{eqn:def theta2s theta3s}, such that it is smaller than the first zero of $R(\bxs,\cdot)$; cf.~Figure \ref{fig:R neg}. If {\RP } holds, then let $\beta$ be an arbitrary parameter such that $0<\beta<1$. Such choice of $\beta$ guarantees in both cases {\RN} and {\RP}~that 
\begin{align} \label{bound:R-lower-Inter-III}
    R(t) \geq 2R_0 \,,
\qquad
    \text{in Interval III.} 
\end{align}

To define the end-time for Interval III, we first introduce the constants
\begin{equation} \label{def:C-3-4-Inter-III}
   C_3:=\dfrac{C_1}{\alpha\,(1-\beta)} \,, 
\qquad
   C_4:= \dfrac{C_1}{\beta\,(1-\beta)(\thetat-\thetas)},
\end{equation}
with $C_1$ defined in \eqref{def:C-1-2-Inter-II}. Let
\begin{equation} \label{eqn:def sigma3 2d}
    \sigma_3:= \sup\{ t\geqs \tEndII \,:\,  \alpha\,\eps^{1/6}\leqs\theta(s)-\thetas \leqs \beta\,(\thetat-\thetas) \,\, \text{for all $s \in [\tau_2, t]$} \} \,.
\end{equation}
Then the end-time of Interval III is defined as
\begin{equation}
\tEndIII := \min\{ \sigma_3, \tEndII+C_3\,\eps^{5/6}-C_4\,\eps\}.\label{eqn:def tEndIII 2d}
\end{equation}
Observe that by evaluating \eqref{eqn:diff ineq theta - thetas int2 2d} at $t=\tEndII$ and noting that $\theta(\tEndII)-\thetas=\alpha\,\eps^{1/6}$, we obtain that ${\rm d}\theta/\dt>0$ at $t=\tEndII$. Therefore $\sigma_3>\tEndII$ and the length of Interval III, i.e.~the interval $[\tEndII,\tEndIII]$, is strictly positive.

The main goal in this part is to show that
\begin{align} \label{goal:Inter-III}
    \theta(\tEndIII)-\thetas = \beta\,(\thetat-\thetas) \,,
\quad \text{and} \quad
    r(t) > 0 \,,
\qquad
   t \in [\tau_2, \tau_3] \,.
\end{align}

First, a similar argument as for Interval I and II shows that by our choice of $\beta$ and~\eqref{bound:R-lower-Inter-III}, 
\begin{align} \label{bound:Inter-III-1}
   |\bx(t)-\bxs|
\leqs 
   \bar{R}\, (C_0+C_1+C_3)\,\eps^{2/3} \,,
\qquad
   R_0 \leqs r(t) \leqs \bar R \,,
\qquad
   t \in [\tau_2, \tau_3] \,.
\end{align}

To estimate the rate of growth of $\theta$ we need an appropriate bound for $\eta$. Unlike the estimates for $\eta$ in Interval I and II, where we carefully compared the sizes of $A$ and $B$, now we only need a rather crude estimate following directly from the definition in~\eqref{eqn:def eta 2d general}. First, for $\Eps$ small enough, we know that 
\begin{equation*} 
   (\bx(t),\theta(t))\in  \bar{B}'\,,\qquad
   t\in[\tEndII,\tEndIII],
   \end{equation*}
where
   \begin{equation}\label{eqn:def ball B bar prime}
\bar{B}':= \bar{B}(\bxs,1)\times\bar{B}(\thetas,\thetat-\thetas) \,.
\end{equation}
Therefore by definition of $\eta$ in \eqref{eqn:def eta 2d general},  it follows that
\begin{equation} \label{eqn:bnd eta x-xs int3 2d}
     |\eta(t)| 
\leqs 
    \dfrac{1}{R_0}\max_{\bar{B}'}|\nabla_\bx H | \cdot  |\bx(t)-\bxs|.
\end{equation}
Combining \eqref{bound:Inter-III-1} with \eqref{eqn:bnd eta x-xs int3 2d}, we have
\begin{equation}
|\eta(t)| \leqs D_{12}\, \eps^{2/3},\label{eqn:bnd eta eps int3 2d}
\end{equation}
with $D_{12}:=\bar{R}/R_0\cdot (C_0+C_1+C_3)\cdot\max_{\bar{B}'}|\nabla_\bx H|$, which is independent of $\eps$.

Using the bound for $\eta$, now we study the evolution of $\theta - \thetas$. By \eqref{eqn:def sigma3 2d}:
\begin{equation*}
    \thetas+\alpha\,\eps^{1/6} 
\leqs 
    \theta(t) \leqs \thetas + \beta\,(\thetat-\thetas) \,,
\qquad
   t \in [\tau_2, \tau_3] \,.
\end{equation*}
In particular, $\theta(t)\in[\thetas,\thetat]$ and $\thetat-\theta(t)\geqs (1-\beta)\cdot(\thetat-\thetas)$. For the evolution of $\theta$ we have, 
\begin{align}\label{eqn:diff ineq theta int3 2d with eta}
\eps\, \dfrac{{\rm d} (\theta-\thetas)}{\dt} \geqs 
    \dfrac{1}{\bar{R}}\,\bigg( \min_{\phi\in[\thetas,\thetat]}h(\phi) \bigg)\cdot (1-\beta)\,(\thetat-\thetas) \cdot (\theta(t)-\thetas)^2 + \eta(t).
\end{align}
This estimate is similar to \eqref{eqn:diff ineq theta int2 2d}, be it that $\eta$ is no longer nonnegative. Instead, we know that $\eta(t)\geqs -D_{12}\,\eps^{2/3}$, due to \eqref{eqn:bnd eta eps int3 2d}. If we define
\begin{align*}
   D_{13}:=\frac{2(1-\beta)}{C_1},
\end{align*}
with $C_1$ given by~\eqref{def:C-1-2-Inter-II},  then $D_{13}$ is exactly the coefficient in front of $(\theta-\thetas)^2$ in \eqref{eqn:diff ineq theta int3 2d with eta}. Hence, it follows from \eqref{eqn:diff ineq theta int3 2d with eta} that
\begin{align*}
\eps\, \dfrac{{\rm d} (\theta-\thetas)}{\dt} \geqs 
    D_{13} (\theta-\thetas)^2 -D_{12}\,\eps^{2/3}.
\end{align*}
Consequently, the $\theta$-equation satisfies
\begin{align}  \label{eqn:diff ineq theta int3 2d}
   \dfrac{{\rm d}(\theta-\thetas)}{\dt}
&\geqs 
   \dfrac12\,D_{13}\,\eps^{-1}\,(\theta-\thetas)^2 
   + \underbrace{\dfrac12\,D_{13}\,\alpha^2\,\eps^{-2/3} - D_{12}\,\eps^{-1/3}}_{\geqs \, 0 \,\,\text{ for $\eps$ sufficiently small}}
 \geqs \dfrac12\,D_{13}\,\eps^{-1}\,(\theta-\thetas)^2 \,,
\end{align}
for all $t\in[\tEndII,\tEndIII]$. Here, we used that $(\theta-\thetas)\geqs \alpha\,\eps^{1/6}$. The accompanying initial condition is $\theta(\tEndII)-\thetas=\alpha\,\eps^{1/6}$. After solving for the corresponding subsolution, we find that
\begin{equation}
\theta(t)-\thetas \geqs \dfrac{2\,\alpha\,\eps}{2\,\eps^{5/6}- \alpha\, D_{13}\,(t-\tEndII)}.\label{eqn:lower bnd theta-thetas int3 2d}
\end{equation}
Since $\theta$ is increasing within Interval III, at the end of the interval, there are only two possible scenarios:
\begin{description}
\item[Case III-1] $\theta(\tEndIII)-\thetas=\beta\,(\thetat-\thetas)$. By \eqref{eqn:def tEndIII 2d}, we know that the final time must satisfy $\tEndIII-\tEndII\leqs C_3\,\eps^{5/6}-C_4\,\eps$.
\item[Case III-2] $\tEndIII-\tEndII=C_3\,\eps^{5/6}-C_4\,\eps$. 
It follows from \eqref{eqn:lower bnd theta-thetas int3 2d} that
\begin{equation*}
\theta(\tEndIII)-\thetas \geqs \dfrac{2\,\alpha\,\eps}{2\,\eps^{5/6}- \alpha\, D_{13}\,(C_3\,\eps^{5/6}-C_4\,\eps)} = \beta\,(\thetat-\thetas),
\end{equation*}
by the definitions of $C_3, C_4$ in~\eqref{def:C-3-4-Inter-III}. 
This lower bound implies that $\theta(\tEndIII)-\thetas$ must actually be equal to $\beta\,(\thetat-\thetas)$, since otherwise $\tEndIII$ would have been reached before $t=\tEndII+C_3\,\eps^{5/6}-C_4\,\eps$.
\end{description}
It follows from the above considerations that~\eqref{goal:Inter-III} holds on Interval III.
\medskip

\Ni \underline{\it{Step 4: Interval IV (Convergence to $\thetat$). }} In this last interval we show that $\theta$ converges to $\thetat$. The rough definition of the endpoint of Interval IV is the time when $\theta$ falls into a small neighbourhood of $\thetat$. The main idea is that the first term $\frac{1}{r} H$ on the right-hand side of the $\theta$-equation becomes a linear forcing that drives $\theta$ toward $\thetat$. However, unlike in the previous three intervals, we have to treat separately the case (RN), as the forcing term $R$ in the $r$-equation may become negative in Interval IV. This generates a delicate situation regarding the positivity of $r$. 
\medskip

\Ni {\underline{\bf Case (RN)}} In this case $R$ is bounded but is allowed to be negative for $\theta \in [\thetatwos, \thetathrees]$; see Figure \ref{fig:R neg}. We will show however that $r$ remains strictly positive. To this end, we divide Interval IV into two subintervals IV-A and IV-B: during the first subinterval IV-A, $\theta$ evolves from $\theta(\tau_3)$ to $\thetathrees$, thus covering the whole region where $R$ can be negative. This is the region where we need to estimate $r$ carefully and show that it remains strictly positive. During the second subinterval $\theta$ satisfies $\theta \geqs \thetathrees$. Hence, $R$ has a strict lower bound which makes the analysis more straightforward.  

We start by giving the precise definition of the first subinterval IV-A. Let 
\begin{equation}
\sigma_{4A}:= \sup\{ t\geqs \tEndIII \,:\,  \thetas + \beta\,(\thetat-\thetas)\leqs \theta(s) \leqs \thetathrees \,\,\, \text{and} \,\,\, r(s)>0 \,\, \text{for all $s \in [\tau_3, t]$}\},\label{eqn:def sigma4A 2d}
\end{equation}
Define the end-time $\tau_{4A}$ of the first subinterval IV-A as 
\begin{equation}
\tEndIVA := \min\{ \sigma_{4A}, \tEndIII+C_5\,\eps\},\label{eqn:def tEndIVA 2d}
\end{equation}
where
\begin{equation}\label{eqn:def C5 int4A}
C_5 := \dfrac{\bar{R}}{D_{14}}\left(\thetathrees - \thetas-\beta\,(\thetat-\thetas)\right)>0 \,,
\qquad
D_{14}:= \dfrac12\,\min_{\phi\in[\thetas+\beta\,(\thetat-\thetas)\,,\,\thetathrees]} H(\bxs,\phi) > 0.
\end{equation}
Since $\theta(\tEndIII)-\thetas=\beta\,(\thetat-\thetas)$, \eqref{eqn:diff ineq theta int3 2d} implies that $\dtheta/\dt>0$ at $t=\tEndIII$. Moreover, $r(\tEndIII)\geqs R_0>0$. Hence, cf.~\eqref{eqn:def sigma4A 2d}, it holds that $\sigma_{4A}>\tEndIII$ and the length of Interval IV-A is strictly positive. Our main objective for Interval IV-A is to show that there exists a constant $\hat R_0 > 0$ such that 
\begin{align} \label{goal:Inter-IV-A}
    \theta(\tau_{4A}) = \thetathrees \,,
\qquad
 0 <  \hat R_0 \leqs r(t) \leqs \bar R  \,,
\qquad
   t \in [\tau_3, \tau_{4A}] \,.
\end{align}
The upper bound $r \leq \bar R$ holds for all $t\geqs0$, while by the definition of $\tau_{4A}$ we have $r > 0$ for $t \in [\tau_3, \tau_{4A})$. Consequently, 
\begin{equation} \label{eqn:bnd x-xs int4A 2d}
   |\bx(t)-\bxs|
\leqs 
   \bar{R}\, (C_0+C_1+C_3)\,\eps^{2/3} + \bar{R}\, C_5 \, \eps \,,
\qquad
  t\in[\tEndIII,\tEndIVA] \,.
\end{equation}
Define
\begin{equation*}
    \hat{R} 
:= 2\, \underbrace{\max_{\phi\in[\thetas,\thetat]}\left(-R(\bxs,\phi)  \right) }_{\geqs\, 0 \text{ in case \RN} }+ 1
> 0 \,.
\end{equation*}
The reason for adding 1 in the above definition is to make sure that $\hat R > 0$ rather than $\hat R \geqs 0$, which turns out to be useful in the sequel. Then
\begin{align} \label{bound:lower-R-Inter-IV-A}
   R(\bxs,\theta(t))\geqs -\hat{R}/2 \,,
\qquad
   t\in[0,\tEndIVA] \,.
\end{align} 
Due to \eqref{eqn:bnd x-xs int4A 2d}, the difference between $R(\bx(t),\theta(t))$ and $R(\bxs,\theta(t))$ is arbitrarily small (for sufficiently small $\eps$). Hence, it follows from~\eqref{bound:lower-R-Inter-IV-A} that
\begin{equation*} 
    R(\bx(t),\theta(t)) > -\hat{R} \,,
\qquad
    t \in [\tau_3, \tau_{4A}] \,,
\end{equation*}
provided $\eps$ is sufficiently small. 
In order to show the positive lower bound of $r$, we study the rate of change of $r$ with respect to $\theta$. Note that $(\bx(t),\theta(t))\in\bar{B}'$ within Interval IV-A, with $\bar{B}'$ being defined in \eqref{eqn:def ball B bar prime}. By \eqref{eqn:bnd x-xs int4A 2d} and the definition of $\eta$ in~\eqref{eqn:def eta 2d general}, we have
\begin{equation}\label{eqn:bnd eta int4A}
|\eta(t)| \leqs \dfrac{1}{r(t)}\max_{\bar{B}'}|\nabla_\bx H | \cdot  |\bx(t)-\bxs| \leqs \dfrac{1}{r(t)}\max_{\bar{B}'}|\nabla_\bx H | \cdot  \bar{R}\cdot(C_0+C_1+C_3+C_5)\,\eps^{2/3}.
\end{equation}
Furthermore, 
\begin{equation*}
     \dfrac{1}{r(t)}\, H(\bxs,\theta(t))  
\geqs 
    \dfrac{1}{r(t)}\, \min_{\phi\in[\thetas+\beta\,(\thetat-\thetas)\,,\,\thetathrees]} H(\bxs,\phi) >0 \,,
\qquad
  t \in [\tau_3, \tau_{4A})
%
\end{equation*}
with the lower bound being \textit{strictly} positive due to \eqref{eqn:H poly theta}. By \eqref{eqn:bnd eta int4A} $\eta$ is arbitrarily small for $\eps$ sufficiently small, and hence we have (for $\Eps$ small enough) that
\begin{equation} \label{eqn:lower bnd rhs dtheta/dt int4A}
   \Eps \frac{\dtheta}{\dt} 
=
    \dfrac{1}{r (t)}\,H(\bxs,\theta(t)) + \eta(t) 
\geqs  
     \dfrac{D_{14}}{r(t)} > 0 \,,
\qquad
   t \in [\tau_3, \tau_{4A}) \,,
\end{equation}
where the positive constant $D_{14}$ is given by \eqref{eqn:def C5 int4A}.
The evolution of $r$, on the other hand, satisfies
\begin{equation} \label{bound:eqn-r-IV-A}
\eps\,\dfrac{\dr}{\dt}=-r+R(\bx,\theta)\geqs -(r+\hat{R}) 
\end{equation}
with $r + \hat R > 0$. Combining~\eqref{eqn:lower bnd rhs dtheta/dt int4A} and~\eqref{bound:eqn-r-IV-A}, we have
\begin{equation*}
   \dfrac{\dr}{\dtheta}
=  \dfrac{\eps\,\dfrac{\dr}{\dt}}{\eps\,\dfrac{\dtheta}{\dt}} 
\geqs 
    \dfrac{-(r+\hat{R})}{\eps\,\dfrac{\dtheta}{\dt}}
\geqs
   \frac{- r (r + \hat R)}{D_{14}},
\end{equation*}
where the right-hand side is strictly negative. Hence we have
\begin{equation*}
\dfrac{\dr}{r (r+\hat{R})}=\dfrac{1}{\hat{R}}\,\left(\dfrac1r   -\dfrac{1}{r+\hat{R}}  \right)\,\dr \geqs -\dfrac{\dtheta}{D_{14}}.
\end{equation*}
Integration of this differential inequality from $\tEndIII$ to some $t\in[\tEndIII,\tEndIVA]$ yields
\begin{equation*}
\ln\left(\dfrac{r(t)}{r(\tEndIII)}\right)   -\ln\left(\dfrac{r(t)+\hat{R}}{r(\tEndIII)+\hat{R}} \right) \geqs -\dfrac{\hat{R}}{D_{14}} \, (\theta(t)-\theta(\tEndIII))
\end{equation*}
from which it follows that
\begin{equation} \label{bound:lower-r-explicit-IV-A}
   r(t) 
\geqs 
   \dfrac{\hat{R}}{\left( 1+ \hat{R}/r(\tEndIII) \right)\cdot\exp\left( \hat{R} \cdot (\theta(t)-\theta(\tEndIII))/D_{14}\right) -1} \,,
\qquad
   t \in [\tau_3, \tau_{4A}) \,.
\end{equation}
By~\eqref{eqn:lower bnd rhs dtheta/dt int4A}, we know that $\theta$ is strictly increasing on Interval IV-A. Hence $\theta(t) \geqs \theta(\tau_3)$ for $t \in [\tau_3, \tau_{4A}]$ and the right-hand side of~\eqref{bound:lower-r-explicit-IV-A} is strictly positive. Furthermore, by~\eqref{goal:Inter-III} and~\eqref{bound:Inter-III-1}, we know that $R_0\leqs r(\tEndIII)\leqs \bar{R}$,
$\theta(\tEndIII)=\thetas+\beta\,(\thetat-\thetas)$; also, $\theta(t)\in[\thetas+\beta\,(\thetat-\thetas)\,,\,\thetathrees]$ on Interval IV-A. Hence, 
\begin{equation}\label{eqn:lower bnd r int4A} 
   r(t) 
\geqs 
   \hat R_0 
 := \dfrac{R_0\,\hat{R}}{R_0+\hat{R}}\cdot\exp\left( -\hat{R} \, (\thetathrees-\thetas)/D_{14}\right)
> 0 \,,
\qquad
   t \in [\tau_3, \tau_{4A}] \,.
\end{equation}
Therefore, the second bound in~\eqref{goal:Inter-IV-A} holds and the end of Interval IV-A cannot be due to $r$ reaching~zero.

To show the first equality in~\eqref{goal:Inter-IV-A}, we derive from~\eqref{eqn:lower bnd rhs dtheta/dt int4A} that
\begin{equation}\label{eqn:lower bnd theta fnc of t int4A}
    \theta(t) 
\geqs 
   \thetas+\beta\,(\thetat-\thetas)+\dfrac{D_{14}\,(t-\tEndIII)}{\eps\,\bar{R}} \,,
\qquad
    t\in[\tEndIII,\tEndIVA]
\end{equation}
Since $\theta$ is increasing, the end of Interval IV-A is reached in either of the following two cases:
\begin{description}
\item[Case IV-1] $\theta(\tEndIVA)=\thetathrees$. By \eqref{eqn:def tEndIVA 2d}, we know that the final time must satisfy $\tEndIVA-\tEndIII\leqs C_5\,\eps$.
\item[Case IV-2] $\tEndIVA-\tEndIII=C_5\,\eps$. Therefore, due to \eqref{eqn:lower bnd theta fnc of t int4A}, it holds at time $\tEndIVA$ that
\begin{equation*}
\theta(\tEndIVA) \geqs \thetas+\beta\,(\thetat-\thetas)+\dfrac{D_{14}\,C_5}{\bar{R}} = \thetathrees,
\end{equation*}
where the last equality follows from the definition of $C_5$ in \eqref{eqn:def C5 int4A}. Thus, $\theta(\tEndIVA)=\thetathrees$ must hold, since otherwise the end of Interval IV-A would have been reached earlier than at $\tEndIVA$.
\end{description}
We thereby have shown that~\eqref{goal:Inter-IV-A} holds. 


\medskip

Next, we define the second subinterval IV-B.  Let $\lambda$ be an arbitrary constant such that $0<\lambda\leqs 1/3$ and fix $n \in \N$ such that $n\,\lambda\geqs 2/3$. Define
\begin{equation}
\sigma_{4B}:= \sup\{ t\geqs \tEndIVA \,:\, \thetathrees \leqs \theta(s) \leqs \thetat-c\,\eps^{2/3}\,\, \text{for all } s \in [\tau_{4A}, t]\},\label{eqn:def sigma4B 2d}
\end{equation}
The end-time of the subinterval IV-B is defined by
\begin{equation}
\tEndIVB := \min\{ \sigma_{4B}, \tEndIVA+C_6\,\eps^{1-\lambda}\},\label{eqn:def tEndIVB 2d}
\end{equation}
with
\begin{equation}\label{eqn:C6 and c}
C_6 := \dfrac{\bar{R}\,(n!\,(\thetat-\thetathrees))^{1/n}}{(\thetathrees-\thetas)^2\cdot \min_{\phi\in[\thetas,\thetat]}h(\phi)},
\qquad
c=1+\dfrac{\bar{R}^2\, (C_0+C_1+C_3+C_5+C_6) \cdot \max_{\bar{B}'}|\nabla_\bx H|}{\tilde{R}_0\,(\thetathrees-\thetas)^2\cdot \min_{\phi\in[\thetas,\thetat]}h(\phi)},
\end{equation}
where $\tilde{R}_0 = \min\{R_0, \hat R_0\}$ with $\hat R_0$ being the lower bound of $r$ on IV-A defined in~\eqref{eqn:lower bnd r int4A}.
Since $\theta(\tEndIVA)=\thetathrees$ and $\dtheta/\dt>0$ at time $\tEndIVA$, it holds that $\sigma_{4B}>\tEndIVA$. Consequently, $\tEndIVB>\tEndIVA$ and the time-length of the subinterval IV-B is strictly positive. The introduction of the arbitrary number $\lambda$ will be clear from the analysis below.

Our main goal on Interval IV-B is to show that
\begin{align} \label{goal:IV-B}
    \theta(\tau_{4B}) = \thetat - c\, \Eps^{2/3} \,,
\qquad
    r(t) \geq \tilde{R}_0 = \min\{R_0, \hat R_0\} \,,
\qquad
   t \in [\tau_{4A}, \tau_{4B}] \,.
\end{align}

Similar as in all the previous intervals, the upper bound \eqref{bound:r-all-time} holds for all $t\geqs0$. Consequently,
\begin{equation} \label{eqn:bnd x-xs int4B 2d}
  |\bx(t)-\bxs|
\leqs 
   \bar{R}\cdot(C_0+C_1+C_3+C_5+C_6)\,\eps^{2/3} \,,
\qquad
   t\in[0,\tEndIVB]
\end{equation}
where we use that $\lambda\leqs1/3$. Due to \eqref{def:R-0-RN} and \eqref{eqn:def sigma4B 2d} we have that $R(\bxs,\theta(t))\geqs 2\,R_0$ for all $t\in[\tEndIVA,\tEndIVB]$, and therefore
\begin{align*}
    r(t) \geqs \tilde R_0 = \min\{R_0, \hat R_0\} > 0 \,,
\qquad
    t\in[0,\tEndIVB] \,.
\end{align*}
Compare the way in which we derived the lower bounds on $r$ in 
\eqref{bound:interval-I-1}, \eqref{bound:interval-II-1}, \eqref{bound:Inter-III-1} and \eqref{goal:Inter-IV-A}. Hence the lower bound for $r$ in~\eqref{goal:IV-B} holds.

In order to show the first equality in~\eqref{goal:IV-B}, we again use the definition of $\eta$ and obtain its bound as 
\begin{equation} \label{eqn:bnd eta eps int4B 2d}
    |\eta(t)|  \leqs D_{15}\,\eps^{2/3} \,,
\qquad
    t\in[\tEndIVA,\tEndIVB] \,.
\end{equation}
where the constant $D_{15}$ is given by 
\begin{align}\label{eqn:const D15 eta 4B}
   D_{15}:= \bar{R}/\tilde{R}_0\cdot (C_0+C_1+C_3+C_5+C_6) \cdot \max_{\bar{B}'}|\nabla_\bx H| \,.
\end{align}
Using the above bound of $\eta$, we have
\begin{equation} \label{eqn:diff ineq theta-thetat int4B 2d}
   \eps \dfrac{{\rm d}(\theta-\thetat)}{\dt} 
\geqs 
   -D_{16}\, (\theta(t)-\thetat) - D_{15}\,\eps^{2/3} \,,
\end{equation}
where 
\begin{equation}\label{eqn:def const D16 theta eqn 4B}
D_{16}:=(\thetathrees-\thetas)^2/\bar{R}\cdot \min_{\phi\in[\thetas,\thetat]}h(\phi).
\end{equation}
Note that \eqref{eqn:C6 and c}, \eqref{eqn:const D15 eta 4B} and \eqref{eqn:def const D16 theta eqn 4B} imply that $c=1+D_{15}/D_{16}$. The initial condition $\theta(\tEndIVA)-\thetat = \thetathrees-\thetat<0$ is an $\Ord(1)$-quantity, while $\thetat-\theta(t)\geqs c\,\eps^{2/3}$ on Interval IV-B due to \eqref{eqn:def sigma4B 2d}. Therefore, for any $\eps$ sufficiently small, the right-hand side of \eqref{eqn:diff ineq theta-thetat int4B 2d} is strictly positive for all $t$ in Interval IV-B, and thus $\theta$ is increasing. Moreover, the subsolution of \eqref{eqn:diff ineq theta-thetat int4B 2d} corresponding to the initial condition $\theta(\tEndIVA)-\thetat$ leads to the lower bound:
\begin{equation}
\theta(t)-\thetat \geqs -\dfrac{D_{15}}{D_{16}}\,\eps^{2/3} + \left(\dfrac{D_{15}}{D_{16}}\,\eps^{2/3}- (\thetat-\thetathrees) \right)\cdot \exp(-D_{16}\,(t-\tEndIVA)/\eps).\label{eqn:lower bnd theta-thetat int4B 2d}
\end{equation}
Since $\theta$ is increasing, the end of Interval IV-B is reached in either of the following two cases:
\begin{description}
\item[Case IV-3] $\theta(\tEndIVB)=\thetat-c\,\eps^{2/3}$. By \eqref{eqn:def tEndIVB 2d}, we know that the final time must satisfy $\tEndIVB-\tEndIVA\leqs C_6\,\eps^{1-\lambda}$.
\item[Case IV-4] $\tEndIVB-\tEndIVA=C_6\,\eps^{1-\lambda}$. The lower bound in \eqref{eqn:lower bnd theta-thetat int4B 2d} implies that
\begin{align}
\theta(\tEndIVB)-\thetat \geqs&\, -\dfrac{D_{15}}{D_{16}}\,\eps^{2/3} + \left(\dfrac{D_{15}}{D_{16}}\,\eps^{2/3}- (\thetat-\thetathrees) \right)\cdot \exp(-D_{16}\,C_6\,\eps^{-\lambda})\\
\geqs&\, -\dfrac{D_{15}}{D_{16}}\,\eps^{2/3} - (\thetat-\thetathrees) \cdot \exp(-D_{16}\,C_6\,\eps^{-\lambda}).\label{eqn:bnd theta int4B}
\end{align}
For each $w\geqs0$ and $\gamma\in\N$, the inequality $\exp(w)\geqs 1 + w^\gamma/\gamma!$ holds. Hence,
\begin{equation}
\exp(-D_{16}\,C_6\,\eps^{-\lambda})\leqs \dfrac{1}{1+(D_{16}\,C_6)^n\,\eps^{-n\lambda}/n!} = \dfrac{n!\,\eps^{n\lambda}}{n!\,\eps^{n\lambda}+(D_{16}\,C_6)^n} \leqs \dfrac{n!\,\eps^{2/3}}{(D_{16}\,C_6)^n},\label{eqn:bnd exp int4}
\end{equation}
where we used in the last inequality that $n\lambda\geqs 2/3$. Note that $D_{16}\,C_6=  (n!\,(\thetat-\thetathrees))^{1/n}$, by \eqref{eqn:C6 and c} and \eqref{eqn:def const D16 theta eqn 4B}. Consequently, \eqref{eqn:bnd theta int4B} and \eqref{eqn:bnd exp int4} combine into
\begin{equation*}
\theta(\tEndIVB)-\thetat \geqs -\left(1+\dfrac{D_{15}}{D_{16}}\right)\,\eps^{2/3} = -c\,\eps^{2/3},
\label{eqn:bnd theta final int4B}
\end{equation*}
since $c=1+D_{15}/D_{16}$, as stated before. Thus, $\theta(\tEndIVB)-\thetat =-c\,\eps^{2/3}$ must hold, since otherwise the end of Interval IV-B would have been reached earlier than at $\tEndIVB$.
\end{description}
We thereby have shown that $|\theta(\tEndIVB)-\thetat|=c\,\eps^{2/3}$, while $\tEndIVB\leqs (C_0+C_1+C_3+C_5+C_6)\,\eps^{2/3}$. Furthermore, $\tEndIVB\geqs\tEndI\geqs \bar{C}_1\,\eps^{2/3}$, where the constant $\bar{C}_1$ is provided by \eqref{eqn:lower bnd tau1}. Hence the statement of the theorem is proved in the case \RN.
\medskip

\Ni {\underline{\bf Case (RP)}} This case is a repetition of the analysis in Inteval IV-B for the (RN) case, since now we automatically have 
\begin{align*}
    r(t) \geq R_0 > 0 \,.
\end{align*}
Therefore it is \textit{not} necessary to subdivide Interval IV into two subintervals. We nevertheless present some details for the convenience of the reader. 

Let $\lambda$ be an arbitrary constant such that $0<\lambda\leqs 1/3$ and let $n \in \N$ be such that $n\lambda\geqs 2/3$. Define
\begin{equation}
\sigma_4:= \sup\{ t\geqs \tEndIII \,:\,  \thetas+ \beta\,(\thetat-\thetas) \leqs \theta(s) \leqs \thetat-c\,\eps^{2/3} \,\, \text{for all } s \in [\tau_{3}, t]\},\label{eqn:def sigma4 2d}
\end{equation}
and the end-time $\tau_4$ of Interval IV as
\begin{equation} \label{eqn:def tEndIV 2d}
   \tEndIV 
:= \min\{ \sigma_4, \tEndIII+C_7\,\eps^{1-\lambda}\} \,,
\end{equation}
with
\begin{equation*}
    C_7 
  := \dfrac{C_1\,(n!\,(1-\beta)\,(\thetat-\thetas))^{1/n}}{2\,\beta^2\,(\thetat-\thetas)} \,,
\qquad
c := 1 + \dfrac{C_1\left(D_{12}+  C_7\,\bar{R}/R_0\cdot\max_{\bar{B}'}|\nabla_\bx H|\right)}{2\,\beta^2\,(\thetat-\thetas)}.
\end{equation*}
Together with $\theta(\tEndIII)-\thetas=\beta\,(\thetat-\thetas)$, \eqref{eqn:diff ineq theta int3 2d} implies that ${\rm d}\theta/\dt>0$ at $t=\tEndIII$. Hence, it holds that $\sigma_4>\tEndIII$.
Using similar arguments as before, we can show that $0<R_0\leqs r(t) \leqs \bar{R}$ is satisfied within Interval IV. Moreover, $|\bx(t)-\bxs|$ has an upper bound of order $\mathcal{O}(\eps^{2/3})$; this can be proved analogously to e.g.~\eqref{eqn:bnd x-xs int4B 2d}. Consequently, we have due to \eqref{eqn:bnd eta x-xs int3 2d} that
\begin{equation*}
|\eta(t)|\leqs D_{17}\,\eps^{2/3}\label{eqn:bnd eta eps int4 2d}
\end{equation*}
for all $t\in[\tEndIII,\tEndIV]$. Here, $D_{17}:= D_{12}+ \bar{R}/R_0\cdot C_7 \cdot \max_{\bar{B}'}|\nabla_\bx H|$. The estimate of $\eta$ is very similar to \eqref{eqn:bnd eta eps int3 2d}, be it that the constant $D_{17}$ takes into account the appropriate estimate for $|\bx(t)-\bxs|$ in this interval.

By definition of $\sigma_4$, we have that $\theta(t)-\thetas\geqs \beta\,(\thetat-\thetas)$ on Interval IV. Hence, like in \eqref{eqn:diff ineq theta-thetat int4B 2d}, we have that
\begin{equation}
\eps \dfrac{{\rm d}(\theta-\thetat)}{\dt} \geqs -D_{18}\, (\theta(t)-\thetat) - D_{17}\,\eps^{2/3}\label{eqn:diff ineq theta-thetat int4 2d}
\end{equation}
holds in Interval IV, with $D_{18}=2\beta^2\,(\thetat-\thetas)/C_1$. The initial condition is $\theta(\tEndIII)-\thetat = -(1-\beta)\cdot(\thetat-\thetas)$.
Now, we follow the same steps as in Interval IV-B for the case \RN. The end of Interval IV is reached in either of the following two cases:
\begin{description}
\item[Case 1] $\theta(\tEndIV)=\thetat-c\,\eps^{2/3}$. By \eqref{eqn:def tEndIV 2d}, we know that the final time must satisfy $\tEndIV-\tEndIII\leqs C_7\,\eps^{1-\lambda}$.
\item[Case 2] $\tEndIV-\tEndIII=C_7\,\eps^{1-\lambda}$. Due to the same arguments as in Interval IV-B of \RN, and using the definitions of $C_7 = (n!\,(1-\beta)\,(\thetat-\thetas))^{1/n}/D_{18}$ and $c=1+D_{17}/D_{18}$, we obtain that
\begin{equation*}
\theta(\tEndIV)-\thetat \geqs  -c\,\eps^{2/3}.
\end{equation*}
Thus, $|\theta(\tEndIV)-\thetat| =c\,\eps^{2/3}$ must hold, since otherwise the end of Interval IV would have been reached earlier than at $\tEndIV$.
\end{description}
We have  shown that $|\theta(\tEndIV)-\thetat|=c\,\eps^{2/3}$, while $\tEndIV\leqs (C_0+C_1+C_3+C_7)\,\eps^{2/3}$. Furthermore, $\tEndIVB\geqs\tEndI\geqs \bar{C}_1\,\eps^{2/3}$, where the constant $\bar{C}_1$ is provided by \eqref{eqn:lower bnd tau1}. Hence the statement of the theorem is proved also for the case \RP.
\end{proof}

\begin{remark}

Let $\zeta$ be an $\Ord(1)$ quantity such that $R(\bxs,\cdot)$ is strictly positive on $(\thetas-\zeta,\thetas]$. Such $\zeta$ exists by the continuity of $R(\bxs, \cdot)$ and the positivity of $R(\bxs, \thetas)$. Then for each fixed $\eps$, the well-posedness of~\eqref{eqn:syst eta 2D general} over the time period when $\theta \in (\thetas-\zeta,\thetat)$ immediately follows from the bounds (both lower and upper) of $r$ and $\theta$ shown in the proof of Theorem~\ref{thm: conv quad lin 2D R neg}. 
\end{remark}


Next we show that, provided $R$ is not a constant on $[\thetas, \thetat]$, the speed $r$ also experiences a finite jump; compare Figure \ref{fig:trajectories zoom r theta}(c). The jump in $r$ is generated by the jump of $\theta$ shown in the previous theorem. The main idea here is: $R$ undergoes a finite jump that is induced by the jump of $\theta$. This further translates into the jump of $r$. Back to the particle description, this indicates that both the speed and direction of the particle make an instantaneous change as $\Eps \to 0$. This is the procedure that picks up the correct state for the first-order model.  

\begin{theorem}[Finite jump in $r$]\label{thm:r jump 2D}
Let the assumptions for $H, R$ in Theorem \ref{thm: conv quad lin 2D R neg} hold and let $\tau$ be the end-time of the fourth interval in Theorem~\ref{thm: conv quad lin 2D R neg}. Then 
\begin{align} \label{cond:jump-r}
    \sup_{t \in [0, \tau]} |r(t)-\rs| = \BigO(1) 
\end{align} 
if and only if $R(\bxs,\cdot)$ is \textbf{\emph{not}} a constant on $[\thetas,\thetat]$. 
\end{theorem}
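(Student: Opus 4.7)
The plan rests on the relaxation structure of the $r$-equation $\eps\,\dr/\dt = -r + R(\bx,\theta)$: since the transition of $\theta$ across $[\thetas,\thetat]$ established by Theorem~\ref{thm: conv quad lin 2D R neg} takes time $\BigO(\eps^{2/3}) \gg \eps$, $r(t)$ has ample time to track its ``slow manifold'' value $R(\bx(t),\theta(t))$. Consequently, the size of $\sup_{[0,\tau]}|r-\rs|$ is governed, modulo $o(1)$ corrections, by the range of $R(\bxs,\cdot)$ on $[\thetas,\thetat]$.

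For the ``only if'' direction I would argue by contrapositive. Assuming $R(\bxs,\cdot) \equiv \rs$ on $[\thetas,\thetat]$, the bounds $|\bx(t)-\bxs| = \BigO(\eps^{2/3})$ and $\theta(t) \in [\thetas-a_2\eps,\,\thetat]$ from the proof of Theorem~\ref{thm: conv quad lin 2D R neg}, combined with the $C^2$-smoothness of $R$, yield $R(\bx(t),\theta(t)) = \rs + \BigO(\eps^{2/3})$ throughout $[0,\tau]$. Writing $u := r-\rs$, the ODE $\eps u' = -u + \BigO(\eps^{2/3})$ with $|u(0)| \leqs a_3\,\eps$ forces $\sup_{[0,\tau]}|r-\rs| = \BigO(\eps^{2/3}) = o(1)$, contradicting the assumed $\BigO(1)$ lower bound.

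For the ``if'' direction, suppose $R(\bxs,\cdot)$ is non-constant. The cleanest subcase is $R(\bxs,\thetat) \neq \rs$: here I would show $r(\tau) = R(\bxs,\thetat) + o(1)$ by analyzing the relaxation on Interval IV. Using that $\theta$ approaches $\thetat$ exponentially on this interval (cf.~\eqref{eqn:lower bnd theta-thetat int4B 2d}), so that the slow manifold $R(\bx,\theta)$ is within $o(1)$ of $R(\bxs,\thetat)$ well before $\tau$, one compares $r$ to this moving equilibrium via the variation-of-constants formula
\begin{equation*}
r(\tau) = e^{-\tau/\eps}\,r_0 + \dfrac{1}{\eps}\int_0^\tau e^{(s-\tau)/\eps}\,R(\bx(s),\theta(s))\,\ds,
\end{equation*}
splitting the integral into a window of size $\Ord(\eps\ln(1/\eps))$ before $\tau$ (where $R(\bx(s),\theta(s))$ is close to $R(\bxs,\thetat)$) and an exponentially damped tail. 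This gives $|r(\tau)-\rs| \geqs |R(\bxs,\thetat)-\rs|/2$ for $\eps$ small, so the sup is bounded below by a positive $\eps$-independent constant.

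The residual subcase $R(\bxs,\thetat) = \rs$ with $R(\bxs,\cdot)$ nevertheless non-constant is the main technical obstacle. Then $R(\bxs,\cdot)$ must attain an interior extremum $\phi^\dagger \in (\thetas,\thetat)$ with $R(\bxs,\phi^\dagger) \neq \rs$, and by monotonicity of $\theta$ there is $t^\dagger \in (0,\tau)$ with $\theta(t^\dagger) = \phi^\dagger$; however $\dtheta/\dt$ at $t^\dagger$ may be as large as $\BigO(\eps^{-1})$, so pointwise lag estimates do not suffice to conclude $r(t^\dagger) = R(\bxs,\phi^\dagger) + o(1)$. I would instead exploit the representation
\begin{equation*}
r(t^\dagger) = \int_0^{t^\dagger/\eps} e^{-u}\,R(\bx(t^\dagger-\eps u),\theta(t^\dagger-\eps u))\,du + \text{(exponentially small)},
\end{equation*}
and observe that over the kernel's effective support the argument $\theta(t^\dagger-\eps u)$ lies in an $\BigO(1)$ neighborhood of $\phi^\dagger$ on which $R$ is $\eps$-independently bounded away from $\rs$. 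The convex combination then differs from $\rs$ by a positive $\eps$-independent amount, yielding $|r(t^\dagger)-\rs| \geqs c > 0$ and completing the ``if'' direction.
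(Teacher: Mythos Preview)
Your ``only if'' direction matches the paper's argument (with the minor correction that the bound is $\BigO(\eps^{1/3})$, not $\BigO(\eps^{2/3})$, since $\theta$ may dip to $\thetas - \eps^{1/3}$ in Interval I; this does not affect the $o(1)$ conclusion). Your Duhamel treatment of the subcase $R(\bxs,\thetat) \neq \rs$ can also be made rigorous once one checks that $\theta(\tau - K\eps)$ stays within $\BigO(\eps^{2/3})$ of $\thetat$ for each fixed $K$, which follows from the upper bound $\eps\,d\theta/dt \lesssim (\thetat-\theta)+\BigO(\eps^{2/3})$ on Interval IV.

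The residual subcase $R(\bxs,\thetat) = \rs$, however, has a genuine gap. At an interior $\phi^\dagger \in (\thetas,\thetat)$ one has $H(\bxs,\phi^\dagger) > 0$ and hence $d\theta/dt \sim \eps^{-1}$, so over the kernel's effective support $u \in [0,K]$ the argument $\theta(t^\dagger - \eps u)$ sweeps a $\theta$-interval of length comparable to $K$ --- not a small neighbourhood of $\phi^\dagger$. Your claim that $R(\bxs,\cdot)$ stays $\eps$-independently bounded away from $\rs$ on this whole range is unjustified: nothing prevents $R(\bxs,\cdot)-\rs$ from vanishing or changing sign between $\thetas$ and $\phi^\dagger$, in which case the weighted average may well equal $\rs$ up to $o(1)$, and the argument does not close.

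The paper sidesteps this difficulty by working with $dr/d\theta$ on Interval III rather than the Duhamel formula. Fixing an $\BigO(1)$ subinterval $[\varphi_1,\varphi_2]$ with $R(\bxs,\varphi_1)\neq R(\bxs,\varphi_2)$, one has there the two-sided estimate
\[
\abs{\frac{\dr}{\dtheta}} \lesssim 1
\qquad\text{and}\qquad
\abs{\frac{\dr}{\dtheta}} \gtrsim \abs{-r+R(\bxs,\theta)} - o(1),
\]
since $H(\bxs,\cdot)$ is bounded above and below on $[\varphi_1,\varphi_2]$ and $\eta = \BigO(\eps^{2/3})$. A short contradiction argument then finishes: if $r$ varied only $o(1)$ on $[\varphi_1,\varphi_2]$ while $R(\bxs,\cdot)$ varies $\BigO(1)$, then $\abs{-r+R(\bxs,\theta)}$ would be bounded below on an $\BigO(1)$ subinterval, forcing $\abs{dr/d\theta}$ to be bounded below there and hence $r$ to change by $\BigO(1)$. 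This handles all non-constant $R(\bxs,\cdot)$ uniformly, with no case split on the value $R(\bxs,\thetat)$.
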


\begin{proof}
Note that by construction $\tau$ depends on $\Eps$. In the proof of Theorem \ref{thm: conv quad lin 2D R neg}, $\tau$ is named $\tEndIVB$ or $\tEndIV$. 

\Ni First we consider the case where $R(\bxs,\cdot)$ is constant on $[\thetas,\thetat]$. In this case we show that~\eqref{cond:jump-r} does not hold. 
Note that by assumption of the theorem, $R(\bxs,\thetas)>0$. Hence, if $R(\bxs,\cdot)$ is constant on $[\thetas,\thetat]$, then the case {\RP } must hold.

By assumption \eqref{bound:bar-R}, 
the upper bound $\bar{R}$ is 
large enough such that $\ro\leqs \bar{R}$. As argued before, then $r(t)\leqs \bar{R}$ must hold for all $t\in[0,\tau]$. This implies
there is a constant $\bar{D}\geqs0$ such that 
\begin{align*}
    |\bx(t)-\bxs|\leqs |\bx(t)-\bxo|+|\bxo-\bxs|
\leqs 
    \bar{D}\,\eps^{2/3} \,.
\end{align*} 
where we have used that $\tau=\Ord(\eps^{2/3})$ and $|\bxo-\bxs|\leqs a_1\,\eps$. Moreover, by the proof of Theorem \ref{thm: conv quad lin 2D R neg}, we have 
\begin{align*}
    \theta(t)\in [\thetas-\eps^{1/3},\thetat] \,,
\qquad
    t \in [0,\tau] \,.
\end{align*} 
Therefore $(\bx(t),\theta(t))\in \bar{B}'$ holds, with $\bar{B}'$ as defined in \eqref{eqn:def ball B bar prime}. Let $\tau_1$ be the end-time of Interval I for Theorem~\ref{thm: conv quad lin 2D R neg}. Then for all $t\in[0,\tEndI]$ we have that $|\theta(t)-\thetas|\leqs \eps^{1/3}$. Thus,
\begin{align}
\nonumber |R(\bx(t),\theta(t))-R(\bxs,\thetas)|\leqs&\, |R(\bx(t),\theta(t))-R(\bx(t),\thetas)| + |R(\bx(t),\thetas)-R(\bxs,\thetas)|\\
\leqs&\, \max_{\bar{B}'}|\partial_{\theta} R| \cdot \eps^{1/3} + \max_{\bar{B}'}|\nabla_\bx R|\cdot \bar{D}\,\eps^{2/3}.\label{eqn:diff R-R int1, R const}
\end{align}
Furthermore, for each $t\in[\tEndI,\tau]$, we have that $\theta(t)\in[\thetas,\thetat]$. Hence,
\begin{equation}
|R(\bx(t),\theta(t))-R(\bxs,\thetas)|=|R(\bx(t),\theta(t))-R(\bxs,\theta(t))|, \quad \text{for all } t\in[\tEndI,\tau],\label{eqn:diff R-R int234, R const}
\end{equation}
since $R(\bxs,\cdot)$ is constant on $[\thetas,\thetat]$. Moreover,
\begin{equation}
|R(\bx(t),\theta(t))-R(\bxs,\theta(t))|\leqs \max_{\bar{B}'}|\nabla_\bx R| \cdot \bar{D}\,\eps^{2/3}.\label{eqn:diff R-R eps}
\end{equation}
It follows from \eqref{eqn:diff R-R int1, R const}, \eqref{eqn:diff R-R int234, R const} and \eqref{eqn:diff R-R eps} that there is a $D\geqs0$ such that
\begin{equation*}\label{eqn:bnd R if fnct R const}
|R(\bx(t),\theta(t))-R(\bxs,\thetas)|\leqs D\,\eps^{1/3},
\end{equation*}
for all $t\in[0,\tau]$.

Since $r$ satisfies the equation
\begin{equation*}
\eps\,\dfrac{\dr}{\dt} = -r + R(\bx,\theta),
\end{equation*}
the following inequalities are satisfied
\begin{equation*}\label{eqn:diff ineq if fnct R const}
-r + (R(\bxs,\thetas)-D\,\eps^{1/3}) \leqs \eps\,\dfrac{\dr}{\dt} \leqs -r + (R(\bxs,\thetas)+D\,\eps^{1/3}).
\end{equation*}
These differential inequalities have a constant subsolution $r(t)\equiv R(\bxs,\thetas)-D\,\eps^{1/3}$ and a constant supersolution $r(t)\equiv R(\bxs,\thetas)+D\,\eps^{1/3}$ for $t\in[0,\tau]$. Hence, for any initial condition $r(0)$ satisfying $R(\bxs,\thetas)-D\,\eps^{1/3}\leqs r(0) \leqs R(\bxs,\thetas)+D\,\eps^{1/3}$, the corresponding solution remains within these bounds for all $t\in[0,\tau]$. In our case, the initial condition $r(0)=\ro$ satisfies $\rs-a_3\,\eps\leqs \ro \leqs \rs+a_3\,\eps$ by assumption of the theorem, and $\rs=R(\bxs,\thetas)$. Consequently, if $\eps$ is sufficiently small, then
\begin{equation*} \label{eqn:r-rs eps if fnct R const}
    |r(t)-\rs|\leqs D\,\eps^{1/3}  \,,
\qquad
    t \in [0,\tau] \,.
\end{equation*}
which shows~\eqref{cond:jump-r} does not hold.\\

Now we assume that $R$ is not a constant on $[\thetas,\thetat]$. In this case we want to show \eqref{cond:jump-r}: that is, there exists a $Q > 0$ independent of $\Eps$, and a $t \in [0, \tau]$ such that
\begin{align*}
    \abs{r(t) - \rs} \geq Q > 0
\end{align*}
for all $\Eps$ sufficiently small. This jump in $r$ will occur in Interval III which is defined in the proof of Theorem \ref{thm: conv quad lin 2D R neg} as the time interval where $\theta$ escapes the bottleneck and reaches an $\BigO(1)$ distance from $\thetas$.

First we choose proper subintervals to study. Recall that in the case \RP, the constant $0 < \beta < 1$ can be chosen arbitrarily (with the theorem's statements remaining true). 
We now fix $\beta$ such that 
\begin{align*}
    R(\bxs,\thetas+\beta\,(\thetat-\thetas))
\neq 
   R(\bxs,\thetas) \,,
\end{align*} 
which is possible since $R(\bxs,\cdot)$ is not constant on $[\thetas,\thetat]$. In particular, we choose $\beta$ independently of $\eps$ and denote 
\begin{align*}
   \varphi_2:=\thetas+\beta\,(\thetat-\thetas) \,.
\end{align*}
Hence in the case \RP, we have 
\begin{align*}
    R(\bxs, \varphi_2)
\neq 
   R(\bxs,\thetas),
\end{align*} 

In the case \RN, the proof of Theorem \ref{thm: conv quad lin 2D R neg} still works if $\thetatwos$ is taken different from $(\thetas+\thetatwo)/2$, as long as $\thetatwos$ is $\mathcal{O}(1)$ away from both $\thetas$ and $\thetatwo$. Take such $\thetatwos\in(\thetas,\thetatwo)$, independent of $\eps$, such that 
\begin{align} \label{cond:thetas-2}
    R(\bxs,\thetatwos)\neq R(\bxs,\thetas).
\end{align} 
This is possible since $R(\bxs,\cdot)$ is continuous and $R(\bxs,\thetatwo) = 0 < R(\bxs,\thetas)$. 
Next, choose $0<\beta<1$ independent of $\eps$ such that 
\begin{align*}
   \varphi_2:=\thetas+\beta\,(\thetat-\thetas)\leqs \thetatwos
\qquad \text{and} \qquad
    R(\bxs,\varphi_2)\neq R(\bxs,\thetas).
\end{align*} 
The former is required in the proof of Theorem \ref{thm: conv quad lin 2D R neg}. The latter is possible by~\eqref{cond:thetas-2}.

In both cases {\RP} and \RN, let 
\begin{align*}
   \varphi_1:= \thetas + \gamma\,(\thetat-\thetas)
\end{align*} for some $0<\gamma<\beta$ independent of $\eps$ such that 
\begin{align} \label{cond:varphi-1-2}
   \thetas < \varphi_1 < \varphi_2 \,,
\qquad
   R(\bxs,\varphi_1)\neq R(\bxs,\varphi_2) \,.
\end{align} 
It is possible to find such $\gamma$ since $R(\bxs,\thetas)\neq R(\bxs,\varphi_2)$. Note that the distance between $\varphi_1$ and $\varphi_2$ is of order $\mathcal{O}(1)$, and both quantities are $\mathcal{O}(1)$ away from $\thetas$ and $\thetat$. Moreover, within Interval III, the quantity $\theta$ moves from $\thetas+\alpha\,\eps^{1/6}$ to $\varphi_2$, so for $\eps$ sufficiently small, we have:
\begin{align*}
   [\varphi_1,\varphi_2] \subseteq \, \text{Interval III},
\end{align*}
where by an abuse of notation, we denoted by $[\varphi_1,\varphi_2]$ the time interval in which $\theta$ changes from $\varphi_1$ to $\varphi_2$.
%

We now study the rate of change of $r$ with respect to $\theta$ on the subinterval $[\varphi_1, \varphi_2]$. Dividing the ODEs for $r$ and $\theta$, we obtain
\begin{align*}
\dfrac{\dr}{\dtheta}=\dfrac{-r+R(\bx,\theta)}{\dfrac1r H(\bxs,\theta)+\eta}.
\end{align*}
On Interval III, by~\eqref{bound:Inter-III-1} and~\eqref{eqn:bnd eta eps int3 2d}, we have
\begin{align*}
   R_0 \leqs r(t) \leqs \bar{R} \,, 
\qquad
   |\eta(t)|\leqs D_{12}\,\eps^{2/3} \,,
\qquad
   |R(\bx(t),\theta(t))-R(\bxs,\theta(t))|\leqs K_1\,\eps^{2/3}
\end{align*} 
for some $K_1>0$ independent of $\Eps$. 
Consequently, for $\Eps$ sufficiently small, 
\begin{align}\label{eqn:dr dtheta upper bnd  RP jump r}
    \left|\dfrac{\dr}{\dtheta}\right|
= \dfrac{|-r+R(\bx,\theta)|}{\left|\eta + H(\bxs,\theta)/r\right|} 
\leqs
    \dfrac{2\bar{R}}{({K_2}/{\bar{R})} - D_{12}\,\eps^{2/3}}
\leqs 
    \dfrac{2\bar{R}}{{K_2}/(2\bar{R})}=\dfrac{4\bar{R}^2}{K_2} \,,\
\qquad
   \theta \in [\varphi_1, \varphi_2] \,.
\end{align}
where we have defined
\begin{align*}
    K_2:=\min_{\phi\in[\varphi_1,\varphi_2]}H(\bxs,\phi) > 0 \,.
\end{align*} 
The strict positivity of $K_2$ is guaranteed by Assumption~\ref{ass:H 2d quad lin}.
Furthermore,
\begin{align}\label{eqn:dr dtheta lower bnd  RP jump r}
     \left|\dfrac{\dr}{\dtheta}\right|
= \dfrac{|-r+R(\bx,\theta)|}{|\eta + H(\bxs,\theta)/r|} 
\geqs
    \dfrac{|-r+R(\bx,\theta)|}{({K_3}/{R_0}) +D_{12}\,\eps^{2/3}}
\geqs 
     \dfrac{|-r+R(\bxs,\theta)|-K_1\,\eps^{2/3}}{{2K_3}/{R_0}}
\end{align}
where $K_3:=\max_{\phi\in[\thetas,\thetat]}|H(\bxs,\phi)| > 0$. The last inequality holds if $\eps$ is sufficiently small.

Assume that $r=r(\theta)$ changes at most $o(1)$ on $[\varphi_1,\varphi_2]$. 
Then by~\eqref{cond:varphi-1-2},  at least one of the quantities $|-r(\varphi_1)+R(\bxs,\varphi_1)|$ or $|-r(\varphi_2)+R(\bxs,\varphi_2)|$ must be $\mathcal{O}(1)$. By the uniform bound of $\dr/\dtheta$ in~\eqref{eqn:dr dtheta upper bnd  RP jump r}, there must therefore be an interval $\mathcal{I}$ of $\BigO(1)$ length that is either of the form $\mathcal{I}=[\varphi_1,\bar{\varphi}_1]$ or of the form $\mathcal{I}=[\bar{\varphi}_2,\varphi_2]$, on which it holds that 
\begin{align*}
   |-r(\theta)+R(\bxs,\theta)|\geqs K_4 \,,
\qquad
   \theta \in \mathcal{I} 
\end{align*} 
for some $K_4>0$ independent of $\Eps$. The lower bound \eqref{eqn:dr dtheta lower bnd  RP jump r} subsequently implies that 
\begin{align*}
    \abs{\dr/\dtheta} \geqs K_5>0 \,,
\qquad
    \theta \in \mathcal{I}
\end{align*} 
for some $K_5$ independent of $\Eps$ on the interval $\mathcal{I}$. Since the length of $\mathcal{I}$ is of order $\mathcal{O}(1)$, $r$ must have an $\mathcal{O}(1)$ change in $\mathcal{I}$, which contradicts the assumption that $r$ changes at most $o(1)$ on $[\varphi_1,\varphi_2]$.

Since there must be an order $\mathcal{O}(1)$ change in $r$ on $[\varphi_1,\varphi_2]$, it is possible to find a $\bar{\varphi}\in[\varphi_1,\varphi_2]$ and a constant $Q > 0$ such that 
\begin{align*}
   |r(\bar{\varphi})-\rs|\geqs Q > 0 \,,
\end{align*}  If $t$ is the corresponding time in Interval III at which $\theta(t)=\bar{\varphi}$, then $|r(t)-\rs|\geqs Q$ and the statement of the theorem is proved.
\end{proof}



\begin{remark}\label{rem:N moving}
The statements of Theorems \ref{thm: conv quad lin 2D R neg} and \ref{thm:r jump 2D} still hold if we allow all particles to move, that is, if we consider \eqref{eqn:so-polar} rather than \eqref{eqn:syst x polar 2D}. Note that in \eqref{eqn:so-polar} the only coupling between the particles is via their positions; see \eqref{eqn:so-polar b} and \eqref{eqn:so-polar c}. By assuming a uniform upper bound $\bar R$ for \emph{all} functions $R_i$, one immediately obtains a uniform bound for the speeds $r_i$. Hence, within a time interval of $\Ord(\Eps^{2/3})$, all the positions $x_i$ change by at most $\Ord(\Eps^{2/3})$. The analysis that follows is then similar to the case in which only one particle is moving.
\end{remark}


\section{Numerics}
\label{sec:num}
In this section, we first investigate the scenario presented in Figure \ref{fig:trajectories zoom r theta}, and also in Figure \ref{fig:HR}. We call this Run~1. This is a situation where the function $R_1$ remains strictly positive throughout our interval of interest; compare the case {\RP } in the proof of Theorem \ref{thm: conv quad lin 2D R neg}. For several values of $\eps$, we initialize the system just before the double root of $H_1$ gets lost, and we examine the sharp transition of the velocity of particle 1. In particular, we measure the time it takes for $\theta_1$ to reach an order $\Ord(\eps^{2/3})$ distance from $\thetat_1$, and we start measuring at root loss. Theory predicts that this transition should take an $\Ord(\eps^{2/3})$ amount of time; cf.~the statement of Theorem \ref{thm: conv quad lin 2D R neg}. Note that the values of $\thetas_1$ and $\thetat_1$ can be calculated \textit{a priori}, once we have the function $H_1(\bxs_1,\cdot)$ at root loss. 

In the numerical calculations, $K$ is taken to be the Morse potential. This is the same potential as in \cite{EversFetecauRyzhik} and we used the same values for the parameters. The function $g$ is taken as in Figure \ref{fig:vision}(b), with parameters $a=5$ and $b=\pi$.

We consider two different cases: (1) the case in which the positions of all particles are fixed, except for the one that undergoes the change in velocity; and (2) the case in which all four particles are evolving during the transition interval. The results are presented in Figure \ref{fig:fit R pos}. 
In both cases, a linear fit shows that, up to a small numerical error, the amount of time needed for $\theta_1$ to make the transition scales as $\eps^{2/3}$. For the case where only particle 1 is moving, this is in agreement with exactly what was proven in Theorem \ref{thm: conv quad lin 2D R neg}. In Remark \ref{rem:N moving} we argued that the statement of Theorem \ref{thm: conv quad lin 2D R neg} also holds if \textit{all} particles are allowed to evolve. The linear fit through the diamonds in Figure \ref{fig:fit R pos} supports this result and provides the correct scaling.

\begin{figure}[h]%
\centering
\includegraphics[height=0.35\textwidth]{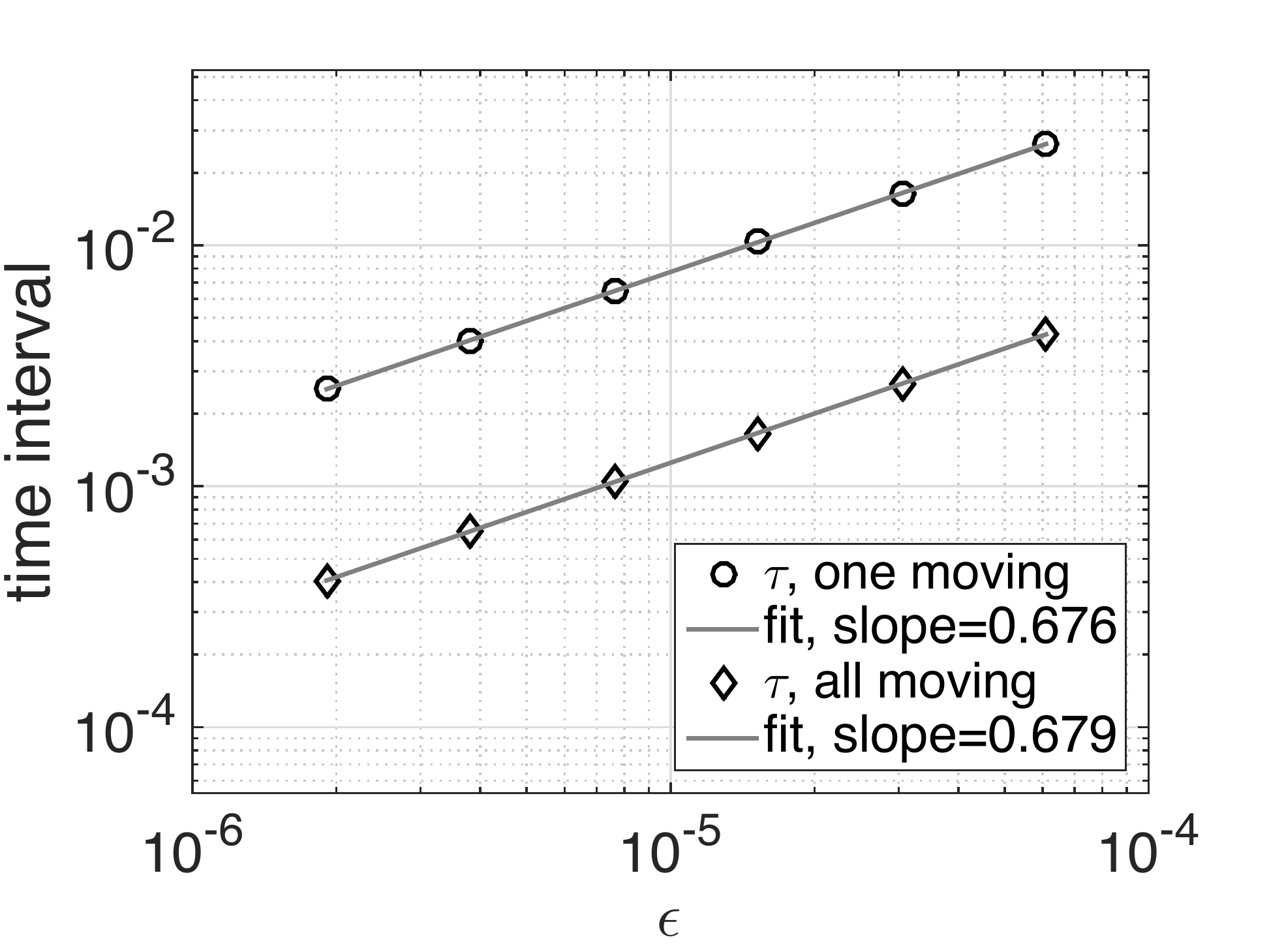}
\caption{The amount of time $\tau$ it takes in Run~1 from root loss until reaching $|\theta_1-\thetat_1|=\eps^{2/3}$, plotted as a function of $\eps$. Circles: Numerical calculations corresponding to \eqref{eqn:syst x polar 2D}. That is, we focus on particle 1 whose velocity undergoes the fast transition. All other particles have their positions fixed. The slope of the fitted line is approximately $\frac23$, which is in agreement with Theorem \ref{thm: conv quad lin 2D R neg}, where the transition time interval is proved to be $\Ord(\eps^{2/3})$. Diamonds: Numerical calculations corresponding to \eqref{eqn:so-polar}, where again particle 1 undergoes a fast transition in velocity, but all particles' positions evolve during the transition interval. The slope of the fitted line is approximately $\frac23$, which supports the claim in Remark \ref{rem:N moving} that Theorem \ref{thm: conv quad lin 2D R neg} predicts the correct length of the transition interval also when \textit{all} particle positions evolve.\\
Because the two datasets are nearly overlapping, in the plot the circles (and the corresponding linear fit) have been translated upwards; it is only the slope that we are interested in.}%
\label{fig:fit R pos}%
\end{figure}

Next, we take the same number of particles (i.e.~4) and the same parameters, but a different initial configuration. Call this Run~2. We consider a scenario that leads to root loss in one of the particles (in this case, particle 2), while the function $R_2(\bxs_2,\cdot)$ is negative in a subinterval of $[\thetas_2,\thetat_2]$; cf.~the case {\RN} in the proof of Theorem \ref{thm: conv quad lin 2D R neg}. 
The spatial coordinate $\bxs_2$ is such that $H_2(\bxs_2,\cdot)$ has a double root. In particular, this double root occurs at $\theta=\thetas_2$, while a simple root is present at $\thetat_2>\thetas_2$, where the derivative of $H_2(\bxs_2,\cdot)$ is negative. See Figure \ref{fig:HR R neg}; the insert is included to show that $R_2(\bxs_2,\thetas_2)>0$.

\begin{figure}[h]%
\centering
\includegraphics[height=0.45\textwidth]{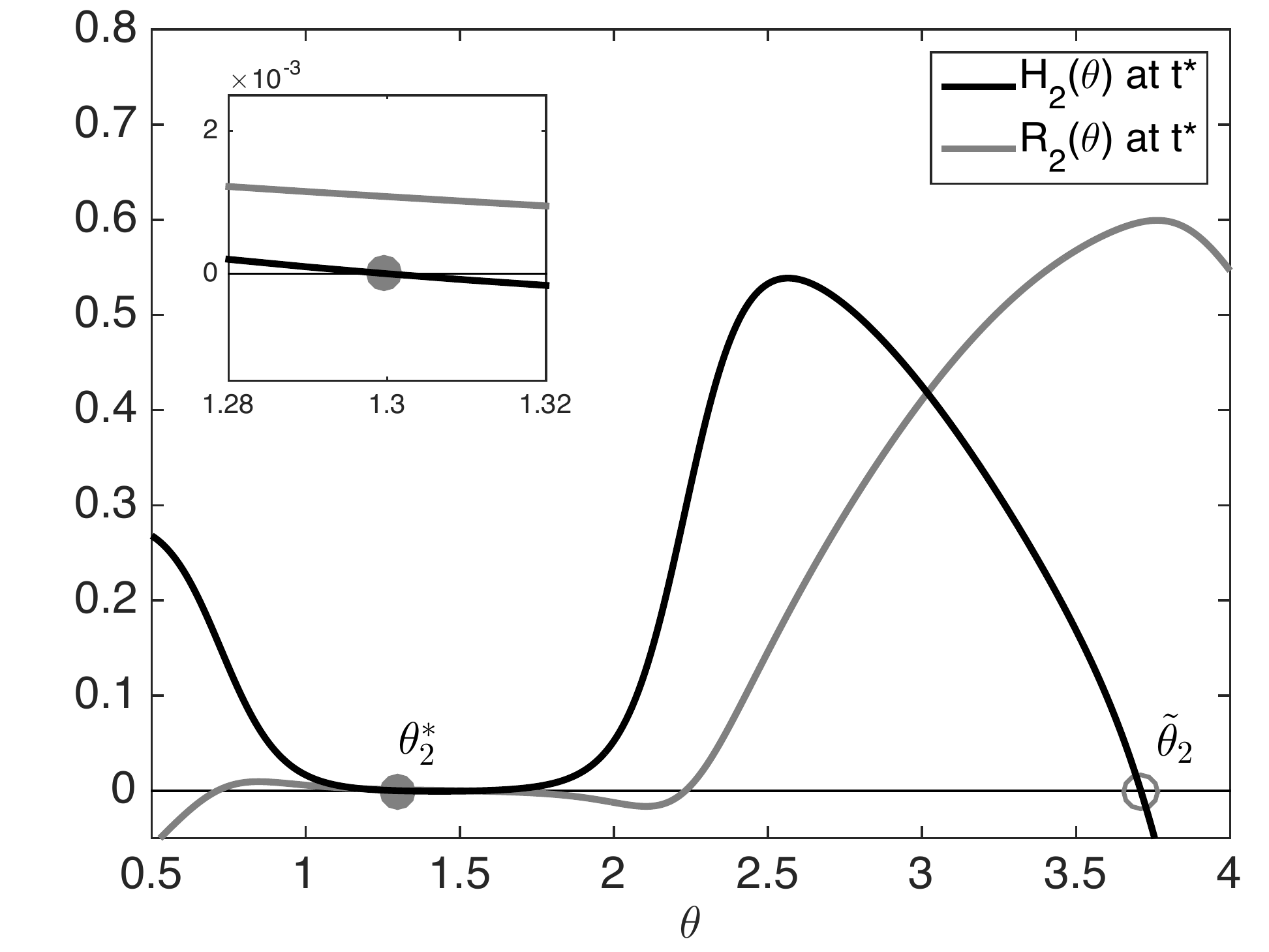}
\caption{The function $H_2$ associated to particle 2 (solid black line), drawn as a function of $\theta$ at the breakdown time $t=t^*$ of Run~2. A double root is present at $\thetas_2\approx1.30$ (filled circle). The open circle indicates $\thetat_2\approx 3.71$, where $H_2$ has a single root and the derivative is negative. Note that $R_2$ is negative in a subinterval of $[\thetas_2,\thetat_2]$, but $R_2(\thetat_2)>0$.
The insert shows the graph near $\thetas_2$ in the \textit{last} numerical timestep in which a root of $H_2$ is present there. The root is still a simple root; the actual double root is very hard to capture numerically, in particular because the graph of $H_2$ is very shallow in that region. The aim of the insert, however, is to show that $R_2$ is strictly positive (be it small) near $\thetas_2$.
}
\label{fig:HR R neg}%
\end{figure}

In Figure \ref{fig:r theta R neg} we show that the angle and the absolute value of the velocity of particle 2 both make a sharp transition in Run~2. We present numerical results for $\eps=10^{-2}, 10^{-3}$, and $10^{-4}$. For decreasing $\eps$, the plots exhibit similar behaviour as in Figures \ref{fig:trajectories zoom r theta}(b) and \ref{fig:trajectories zoom r theta}(c). Note that in Figure \ref{fig:r theta R neg}(b) a logarithmic scale is used on the vertical axis.

\begin{figure}[h]%
\centering
\includegraphics[height=0.35\textwidth]{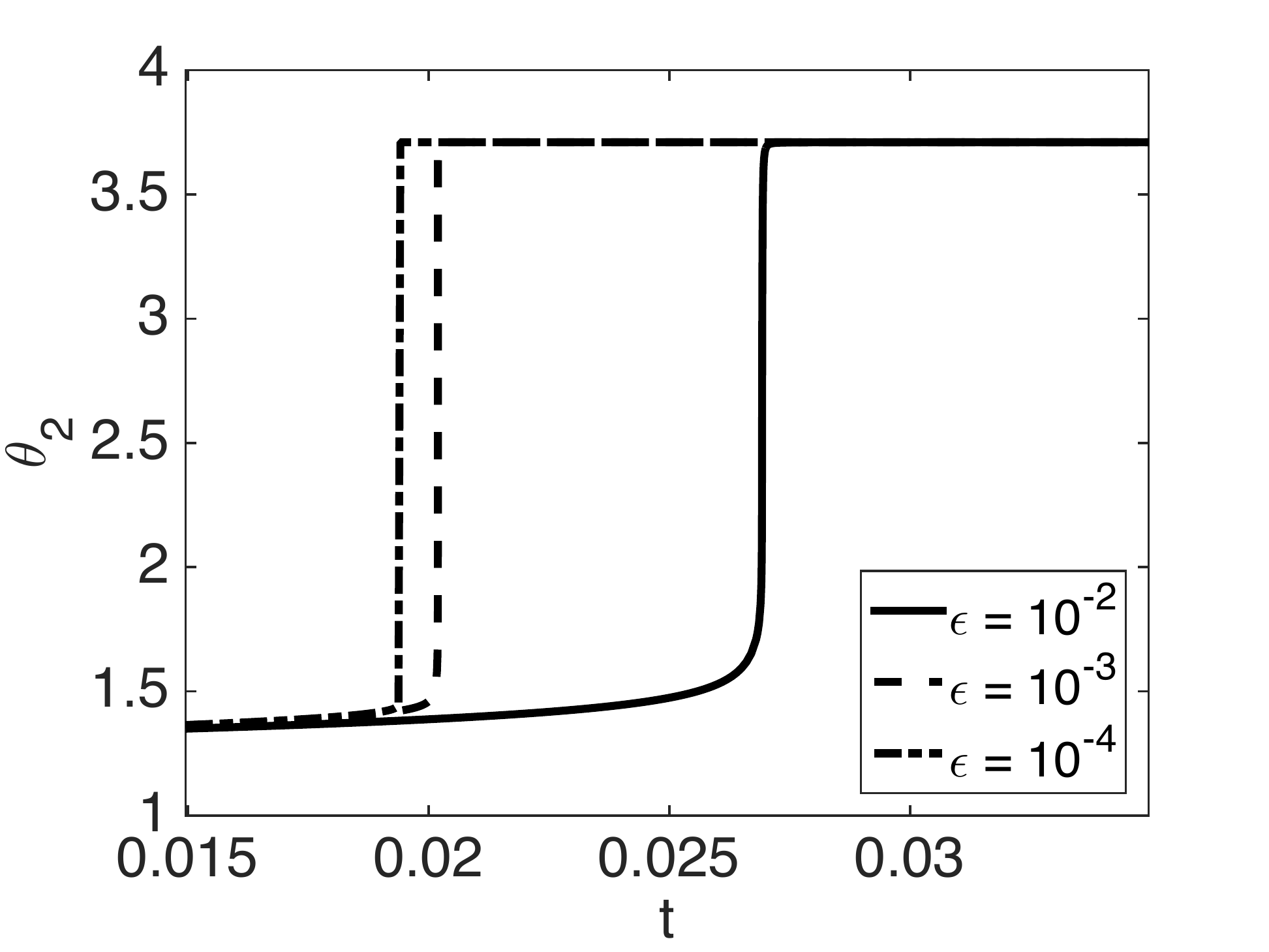}
$~~~$
\includegraphics[height=0.35\textwidth]{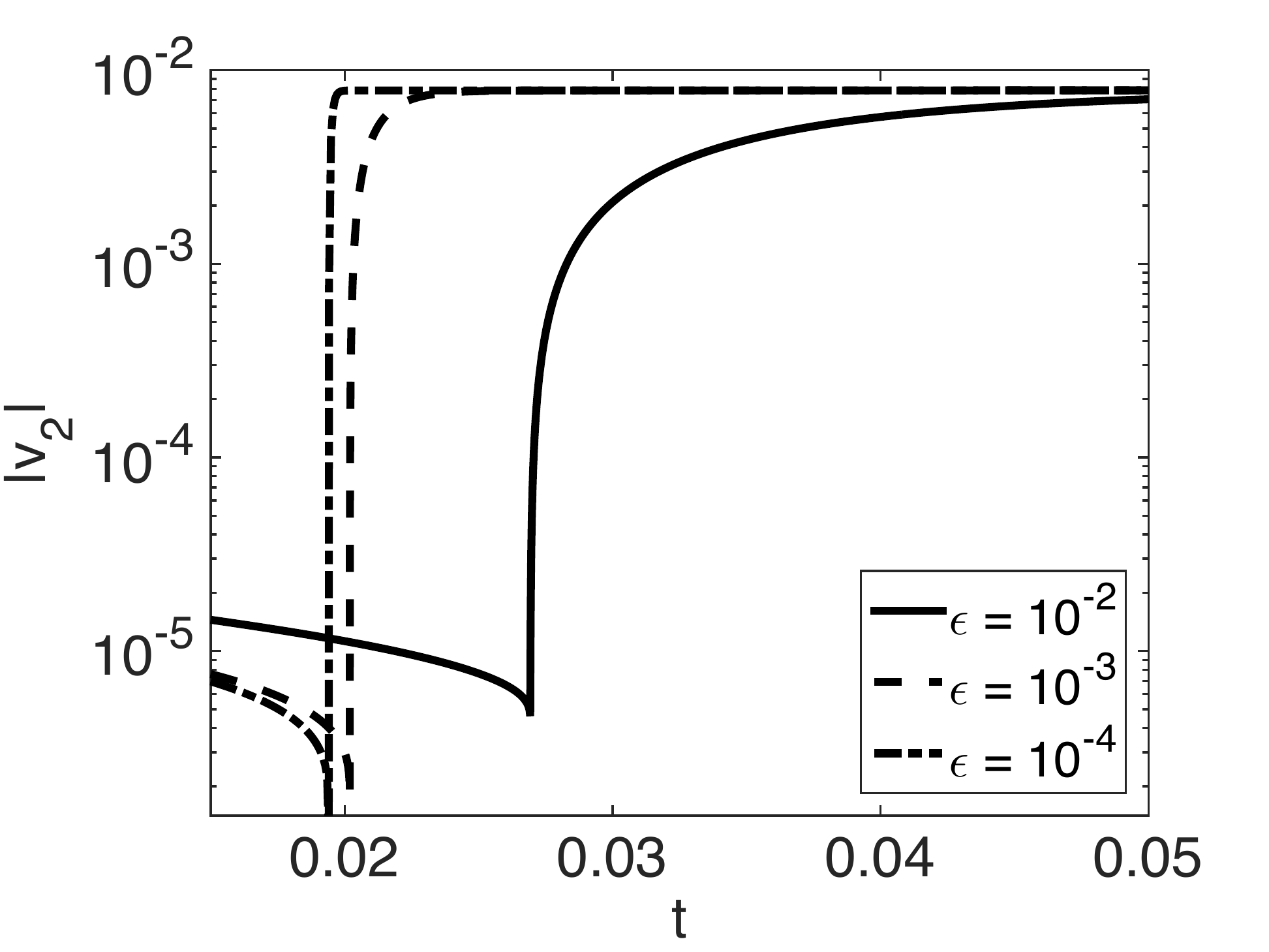}\\
(a) \hspace{0.45\textwidth} (b)
\caption{Time evolution of the velocity of particle 2 in Run~2. This particle is a member of a group of four particles. (a) and (b) Evolution of the angle $\theta_2$ and the magnitude of the velocity $|v_2|$, respectively: solutions of \eqref{eqn:so} for $\eps=10^{-2},10^{-3}$, and $10^{-4}$ capturing sharp transitions in $\theta_2$ and $|v_2|$.}%
\label{fig:r theta R neg}%
\end{figure}

For Run~2, we investigate in Figure \ref{fig:fit R neg} how the length of the transition interval scales with $\eps$. As we did for Run~1, we measure the time from root loss until $\theta_2$ reaches an $\Ord(\eps^{2/3})$ distance from $\thetat_2$. Numerical calculations were done for the case in which only particle 2 moves, as well as for the case in which all four particles move. In both cases, the linear fit in Figure \ref{fig:fit R neg} shows that $\theta_2$ needs an order $\Ord(\eps^{2/3})$ amount of time to make the transition. This is in agreement with Theorem \ref{thm: conv quad lin 2D R neg} and Remark \ref{rem:N moving}. 


\begin{figure}[h]%
\centering
$~~~$
\includegraphics[height=0.35\textwidth]{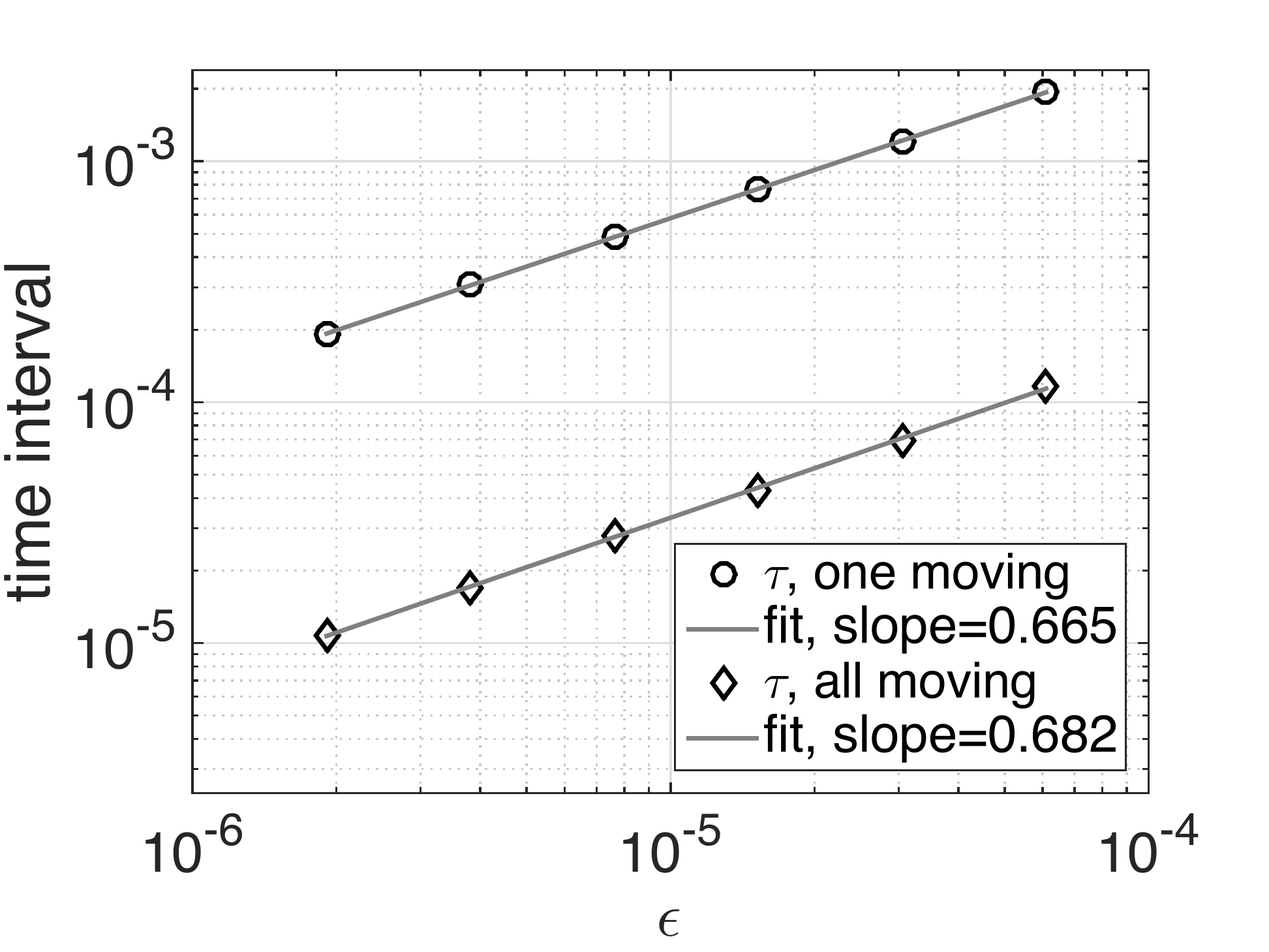}
\caption{The amount of time $\tau$ it takes from root loss until reaching $|\theta_2-\thetat_2|=\eps^{2/3}$, plotted as a function of $\eps$. This graph is for Run~2 where the value of $R_2(\bxs,\cdot)$ is negative in some part of the interval $[\thetas_2,\thetat_2]$. Circles: Numerical calculations corresponding to \eqref{eqn:syst x polar 2D}. The velocity of particle 2 undergoes the fast transition. All other particles have their positions fixed. The slope of the fitted line is \textit{approximately} $\frac23$, which is in agreement with Theorem \ref{thm: conv quad lin 2D R neg}. Diamonds: Numerical calculations corresponding to \eqref{eqn:so-polar}, where again particle 2 undergoes a rapid change in velocity, but all particles' positions evolve during the transition interval. The slope of the fitted line is approximately $\frac23$, which supports the claim that Theorem \ref{thm: conv quad lin 2D R neg} predicts the correct length of the transition interval also when \textit{all} particle positions evolve.}%
\label{fig:fit R neg}%
\end{figure}


\begin{appendix}

\section{Sketch of Proof of Theorem~\ref{thm: conv k ell 1D}}\label{app:proof thm 1D gen ic}
In this appendix we briefly explain the proof of Theorem~\ref{thm: conv k ell 1D}. As mentioned before, the proof parallels that for Theorem~\ref{thm: conv quad lin 2D R neg} with $r$ constant. 

Similar as before, we use the quantity $\eta(t):=\F(x(t),v(t))-\F(\xs,v(t))$ to make a change of variable and rewrite system \eqref{eqn:syst x 1D general} as
\begin{equation}\label{eqn:syst eta 1D general}
\left\{
  \begin{array}{l}
  	\dfrac{\deta}{\dt} = \underbrace{\partial_x\F(x,v)\cdot v}_{=:A} +\underbrace{\dfrac{1}{\eps} \bigg(\partial_v\F(x,v)-\partial_v\F(\xs,v)\bigg)\cdot\F(x,v)}_{=:B},\\
  	\\
	\eps \dfrac{\dv}{\dt} = \F(\xs,v)+\eta,\\ 
	\\
	\eta(0)=0, v(0)=\vs.
  \end{array}
\right.
\end{equation}
Although not written explicitly, the variable $x$ evolves according to $\dx/\dt=v$, like before.

There are two powers of $\eps$ in Theorem~\ref{thm: conv k ell 1D}: the length of the time interval for the transition from $\vs$ to $\vt$ to occur and the final distance of $v$ to $\vt$. 

\subsubsection*{\underline{Interval I}} The overall time scale is determined by Interval I, where $A$ dominates $B$ and $\F(\xs, v)$ becomes comparable to $\eta$. In this interval, we have
\begin{align*}
   \abs{x(t) - \xs} = \BigO(t) \,,
\qquad
   \eta (t) = \BigO(t) \,.
\end{align*}
Moreover, $\eta$ is the main driving force of $v$. Hence,
\begin{align*}
    \abs{v - \vs} = \BigO\vpran{\frac{t^2}{\eps}} \,.
\end{align*}
The balance of $\F(\xs,  v)$ with $\eta$ then gives
\begin{align}\label{eqn:sim end int1}
    (v-\vs)^{2k} \sim t
\quad\Rightarrow\quad
    \vpran{t^2/\eps}^{2k} \sim t
\quad\Rightarrow\quad
   t = \BigO(\eps^{2k/(4k-1)}) \,,
\end{align}
which gives the overall transition time scale. Denote by time $\tEndI$ the end of Interval I. Then we have
\begin{align*}
v(\tEndI)-\vs = \Ord\left( \eps^{1/(4k-1)}\right),
\end{align*}
Note that in the case where $k = 1$, that is, we have the double-root loss as in Section~\ref{sec:toy} or Section~\ref{sec:2D-varr}, we recover the time scale $\BigO(\eps^{2/3})$. Moreover, $v(\tEndI)-\vs = \Ord\left( \eps^{1/3}\right)$, cf.~\eqref{goal:interval-1}.

\subsubsection*{\underline{Interval II}} In Interval II we use that $A$ is still positive and dominates $B$, which implies that $\eta$ remains positive. In Interval II we have moreover that $\F(\xs,v)\sim (v-\vs)^{2k}$. Therefore,
\begin{align*}
\eps\,\dfrac{{\rm d}(v-\vs)}{\dt} \gtrsim (v-\vs)^{2k},
\end{align*}
with initial condition
\begin{align*}
v(\tEndI)-\vs = \Ord\left( \eps^{1/(4k-1)}\right).
\end{align*}
It follows that 
\begin{equation*}
v(t)-\vs \gtrsim \left(\dfrac{\eps}{\eps^{2k/(4k-1)}-  C\,(t-\tEndI)}\right)^{1/(2k-1)}, 
\end{equation*}
for some constant $C$; compare \eqref{eqn:lower bnd theta-thetas int2 2d}. By taking $t:=\tEndII=\tEndI +\Ord(\eps^{2k/(4k-1)})-\Ord(\eps^{(4k+1)/(2(4k-1))})$, we find that
\begin{equation}\label{eqn:v - vs t2 sim}
v(\tEndII)-\vs \gtrsim \eps^{(4k-3)/(2(2k-1)(4k-1))}.
\end{equation}
This distance is (slightly) larger than the one at the end of Interval I. It turns out that $v(\tEndII)$ is actually far enough from $\vs$ to guarantee that $v$ will not remain trapped in the bottleneck. We will see this in the next interval.

For the case $k=1$, the orders of magnitude are $\tEndII=\tEndI+\Ord(\eps^{2/3})-\Ord(\eps^{5/6})$ and $v(\tEndII)-\vs\gtrsim\eps^{1/6}$. These correspond to \eqref{eqn:def tEndII 2d} and the first equation in \eqref{goal:Interval-II}.

\subsubsection*{\underline{Interval III}} In Interval III we allow $A+B$ (and hence $\eta$) to become negative. This is the interval in which $v$ reaches an $\Ord(1)$ distance from $\vs$.

In this interval $\eta$ may be negative, but it is still small: $|\eta| \lesssim \eps^{2k/(4k-1)}$. Consequently,
\begin{equation*}
\eps\,\dfrac{{\rm d}(v-\vs)}{\dt}\gtrsim (v-\vs)^{2k} - \Ord(\eps^{2k/(4k-1)}),
\end{equation*}
with initial condition satisfying \eqref{eqn:v - vs t2 sim}. Due to this initial condition, $(v-\vs)^{2k}$ initially dominates the $\Ord(\eps^{2k/(4k-1)})$ term. It follows that $v$ increases throughout the interval \textit{and} the term $(v-\vs)^{2k}$ remains the dominant one. Therefore, the second term can be absorbed into the first one, to obtain
\begin{equation}
\dfrac{{\rm d}(v-\vs)}{\dt} \gtrsim \eps^{-1}\,(v-\vs)^{2k}.
\end{equation}
Compare this inequality, for the case $k=1$, to \eqref{eqn:diff ineq theta int3 2d}. This is the same estimate as in Interval II, but the initial condition is different. Taking \eqref{eqn:v - vs t2 sim} into account, we find
\begin{equation}
v(t)-\vs \gtrsim \left(\dfrac{\eps}{\eps^{1-(4k-3)/(2(4k-1))}-  C\,(t-\tEndII)}\right)^{1/(2k-1)},
\end{equation}
for some constant $C$. This guarantees that in a time interval after $\tEndII$ of $\Ord(\eps^{(4k+1)/(2(4k-1))})-\Ord(\eps)$ length, $v$ moves an $\Ord(1)$ distance away from $\vs$. We remark that if $k=1$ we have the same timescales as in \eqref{eqn:def tEndIII 2d}.

\subsubsection*{\underline{Interval IV}} The final distance of $v$ to $\vt$ is determined in Interval IV, where $v$ starts $\BigO(1)$ away from both $\vs$ and $\vt$ and the main driving force for $v$ is $\F(\xs, v)$. More precisely, it is the term $(v - \vt)^{2\ell-1}$ in $\F(\xs, v)$; see \eqref{eqn:F factors}. 
Over this interval, we have
\begin{align*}
   \abs{x(t) - \xs} = \BigO\vpran{\eps^{2k/(4k-1)}} \,,
\qquad
   \eta (t) = \BigO\vpran{\eps^{2k/(4k-1)}} \,.
\end{align*}
Consequently,
\begin{align}\label{eqn: diff ineq sim int4}
   \eps \frac{{\rm d} (v-\vt)}{\dt} 
\gtrsim 
   - (v(t)-\vt)^{2\ell-1} - \BigO\vpran{\eps^{2k/(4k-1)}} \,.
\end{align}
Note that $- (v(t)-\vt)^{2\ell-1}$ is positive, since $(v(t)-\vt)^{2\ell-1}$ is an odd power of a negative quantity, hence negative. If $k=\ell=1$, then \eqref{eqn: diff ineq sim int4} is the analogue of \eqref{eqn:diff ineq theta-thetat int4 2d} in the proof of Theorem \ref{thm: conv quad lin 2D R neg}.

Since $(v-\vt)$ is $\Ord(1)$ initially, we can absorb the $-\BigO\vpran{\eps^{2k/(4k-1)}}$ term into the positive term $-(v-\vt)^{2\ell-1}$, \textit{until} they balance. That point in time marks the end of our interval of consideration, and it is the size of $\eta$ that determines $|v-\vt|$ at the end.

The driving mechanism in this interval is
\begin{align*}
   \eps \frac{{\rm d} (v-\vt)}{\dt} 
\gtrsim 
   - (v-\vt)^{2\ell-1} \,.
\end{align*}
The subsequent step is based on differential inequalities (like before), but it requires a separate approach for $\ell=1$ and for $\ell>1$. The resulting inequalities yield that at the end of Interval IV, we have
\begin{align*}
    \abs{v - \vt} = \BigO\vpran{\eps^{(2k)/(4k-1)}} \;\text{if }\ell=1,\quad  \text{and} \quad \abs{v - \vt} = \BigO\vpran{\eps^{(2k-1)/((4k-1)(2\ell-1))}} \;\text{if $\ell>1$.}
\end{align*}

The powers of $\eps$ in terms of $k, \ell$ show that in general the larger $k$ is, the more time it takes the system to leave the bottleneck that is present around $\vs$. Moreover, if $\ell$ increases, then the graph of $\F$ becomes ``flatter" near $\vt$. In the theorem, this is expressed by the fact that $\nu$ decreases and thus the final distance between $v$ and $\vt$ is larger. If $k=\ell=1$ then we have exactly $|v - \vt| = \BigO\vpran{\eps^{2/3}}$ as we have in Theorem \ref{thm: conv quad lin 2D R neg} for $|\theta-\thetat|$. The order of magnitude of Interval IV is arbitrarily close to $\Ord(\eps)$. That is, the length is $\Ord(\eps^{1-\lambda})$ for arbitrarily small $\lambda>0$, similar to Interval IV in the proof of Theorem \ref{thm: conv quad lin 2D R neg}. We omit further details here. 

\end{appendix}

\bibliographystyle{alpha}
\bibliography{bibliography}

\end{document}